 \newtheorem{theorem}{Theorem}[section]
 \newtheorem{corollary}[theorem]{Corollary}
 \newtheorem{lemma}[theorem]{Lemma}
 \newtheorem{proposition}[theorem]{Proposition}
 \theoremstyle{definition}
 \newtheorem{definition}[theorem]{Definition}
 \theoremstyle{remark}
 \numberwithin{equation}{section}
\def \no#1#2#3 {{\bf #1} (#3), #2.}
\def \eds#1#2#3 {#1, #2, #3.}
\def\e{{\rm e}}
\def\d{{\rm d}}
\def\:{{\colon}}
\def\be#1{\begin{equation}\label{#1}}
\def\ee{\end{equation}}
\def\<{\langle}
\def\>{\rangle}
\def\coloneqq{:=}
\newcommand{\p}{\partial}
\newcommand{\lewy}{\left\lbrace}
\newcommand{\prawy}{\right\rbrace}
\newcommand{\supp}{\mathrm{supp}}
\newcommand{\TT}{\mathbb{T}}
\newcommand{\NN}{\mathbb{N}}
\newcommand{\RR}{\mathbb{R}}
\newcommand{\eqnb}{\begin{equation}}
\newcommand{\eqne}{\end{equation}}
\newcommand\blfootnote[1]{%
  \begingroup
  \renewcommand\thefootnote{}\footnote{#1}%
  \addtocounter{footnote}{-1}%
  \endgroup
}
\begin{document}
\title{Weak solutions to the Navier--Stokes inequality with  arbitrary energy profiles}
\author{Wojciech S. O\.za\'nski}
\date{September 10, 2018}
\maketitle
\blfootnote{\noindent Department of Mathematics, University of Southern California, Los Angeles, USA \\ ozanski@usc.edu}

%\subjclass{35Q30, 76D03, 76D05}
%\keywords{Surface growth model, partial regularity, parabolic Poincar\'e inequality}
%\maketitle
%\begin{center}
%{\large(This manuscript has been submitted for publication to \emph{Communications \\in Mathematical Physics}.)}\vspace{1cm}\\
%\end{center}

\begin{abstract}
In a recent paper, Buckmaster \& Vicol (arXiv:1709.10033) used the method of convex integration to construct weak solutions $u$ to the 3D incompressible Navier--Stokes equations such that $\| u(t) \|_{L^2} =e(t)$ for a given non-negative and smooth energy profile $e\colon [0,T]\to \RR$. However, it is not known whether it is possible to extend this method to construct nonunique \emph{suitable weak solutions} (that is weak solutions satisfying the strong energy inequality (SEI) and the \emph{local energy inequality} (LEI)), Leray-Hopf weak solutions (that is weak solutions satisfying the SEI), or at least to exclude energy profiles that are not nonincreasing. 

In this paper we are concerned with weak solutions to the Navier--Stokes inequality on $\RR^3$, that is vector fields that satisfy both the SEI and the LEI (but not necessarily solve the Navier--Stokes equations). Given $T>0$ and a nonincreasing energy profile $e\colon [0,T] \to [0,\infty )$ we construct weak solution to the Navier--Stokes inequality that are localised in space and whose energy profile $\| u(t)\|_{L^2 (\RR^3 )}$ stays arbitrarily close to $e(t)$ for all $t\in [0,T]$. Our method applies only to nonincreasing energy profiles.

The relevance of such solutions is that, despite not satisfying the Navier--Stokes equations, they satisfy the partial regularity theory of Caffarelli, Kohn \& Nirenberg (\emph{Comm. Pure Appl. Math.}, 1982). In fact, Scheffer's constructions of weak solutions to the Navier--Stokes inequality with blow-ups (\emph{Comm. Math. Phys.}, 1985 \& 1987) show that the Caffarelli, Kohn \& Nirenberg's theory is sharp for such solutions.

Our approach gives an indication of a number of ideas used by Scheffer. Moreover, it can be used to obtain a stronger result than Scheffer's. Namely, we obtain weak solutions to the Navier--Stokes inequality with both blow-up and a prescribed energy profile. 
\end{abstract}

%%% ----------------------------------------------------------------------

\section{Introduction}
The Navier--Stokes equations,
\[\begin{split}
\p_t u -\nu \Delta u + (u\cdot \nabla )u +\nabla p &=0,\\
\mathrm{div}\, u &=0,
\end{split}
\]
where $u$ denotes the velocity of a fluid, $p$ the scalar pressure and $\nu >0$ the viscosity, comprise a fundamental model for viscous, incompressible flows. In the case of the whole space $\RR^3$ the pressure function is given (at each time instant $t$) by the formula
\eqnb\label{def_of_the_corresponding_pressure}
p \coloneqq \sum_{i,j=1}^3 \p_{ij} \Psi \ast (u_iu_j),
\eqne
where $\Psi (x) \coloneqq (4\pi |x|)^{-1}$ denotes the fundamental solution of the Laplace equation in $\RR^3$ and ``$\ast$'' denotes the convolution. The formula above, which we shall refer to simply as the \emph{pressure function corresponding to }$u$, can be derived by calculating the divergence of the Navier--Stokes equation.

The fundamental mathematical theory of the Navier--Stokes equations goes back to the pioneering work of \cite{Leray_1934} (see \cite{Leray_review} for a comprehensive review of this paper in more modern language), who used a Picard iteration scheme to prove existence and uniqueness of local-in-time strong solutions. Moreover, \cite{Leray_1934} proved the global-in-time existence (without uniqueness) of weak solutions satisfying the strong energy inequality,
\eqnb\label{energy_inequality_intro}
\| u(t) \|^2 + 2\nu \int_s^t \| \nabla u (\tau )\|^2 \d \tau \leq \| u(s) \|^2
\eqne
for almost every $s\geq 0$ and every $t>s$ (often called \emph{Leray-Hopf weak solutions}). Here $\| \cdot \| $ denotes the $L^2(\RR^3 )$ norm. (\cite{Hopf_1951} proved an analogous result in the case of a bounded smooth domain.) Although the fundamental question of global-in-time existence and uniqueness of strong solutions remains unresolved, many significant results contributed to the theory of the Navier--Stokes equations during the second half of the twentieth century. One such contribution is the partial regularity theory introduced by Scheffer (1976\emph{a}, 1976\emph{b}, 1977, 1978 \& 1980)\nocite{scheffer_hausdorff_measure}\nocite{scheffer_partial_reg}\nocite{scheffer_turbulence}\nocite{scheffer_dim_4}\nocite{scheffer_NSE_on_bdd_domain} and subsequently developed by Caffarelli, Kohn \& Nirenberg (1982)\nocite{CKN}; see also \cite{lin_1998}, \cite{ladyzhenskaya_seregin}, \cite{vasseur_2007} and \cite{kukavica_partial_reg_2009} for alternative approaches. This theory is concerned with so-called \emph{suitable weak solutions}, that is Leray-Hopf weak solutions that are also weak solutions of the \emph{Navier--Stokes inequality} (NSI).
\begin{definition}[Weak solution to the Navier--Stokes inequality]\label{def_weak_sol_of_NSI}
A divergence-free vector field $u\colon \RR^3 \times (0,\infty )$ satisfying $\sup_{t>0} \| u(t) \| <\infty$, $\nabla u \in L^2 (\RR^3 \times (0,\infty ))$ is a \emph{weak solution of the Navier--Stokes inequality} with viscosity $\nu>0$ if it satisfies the inequality
\eqnb\label{NSI}
u\cdot \left( \p_t u -\nu \Delta u +(u\cdot \nabla ) u + \nabla p \right) \leq 0.
\eqne
in a weak sense, that is
\eqnb\label{local_energy_inequality}
2\nu \int_0^\infty \int_{\RR^3}  |\nabla u |^2 \varphi  \leq \int_0^\infty \int_{\RR^3} \left( |u|^2 (\p_t \varphi + \nu \Delta \varphi )  +(|u|^2+2p)(u\cdot \nabla )\varphi   \right)
\eqne
for all non-negative $\varphi \in C_0^\infty (\RR^3 \times (0,\infty ))$, where $p$ is the pressure function corresponding to $u$ (recall \eqref{def_of_the_corresponding_pressure}).
\end{definition}
 The last inequality is usually called the \emph{local energy inequality}. The existence of global-in-time suitable weak solutions given divergence-free initial data $u_0 \in L^2 $ was proved by \cite{scheffer_hausdorff_measure} in the case of the whole space $\RR^3$ and by \cite{CKN} in the case of a bounded domain.
 
In order to see that \eqref{local_energy_inequality} is a weak form of the NSI \eqref{NSI}, note that the NSI can be rewritten, for smooth $u$ and $p$, in the form
\eqnb\label{NSI_rewritten}
\frac{1}{2} \p_t |u|^2 - \frac{\nu}{2}  \Delta |u|^2 +\nu | \nabla u |^2+ u\cdot \nabla \left( \frac{1}{2} |u|^2 + p \right) \leq 0,
\eqne
where we used the calculus identity $u\cdot \Delta u = \Delta (|u|^2/2) - |\nabla u |^2$. Multiplication by $2\varphi$ and integration by parts gives \eqref{local_energy_inequality}. 

Furthermore, setting
\[
f\coloneqq \p_t u -\nu \Delta u +(u\cdot \nabla ) u + \nabla p,
\]
one can think of the Navier--Stokes inequality \eqref{local_energy_inequality} as the inhomogeneous Navier--Stokes equations with forcing $f$,
\[\p_t u -\nu \Delta u +(u\cdot \nabla ) u + \nabla p =f ,\]
where $f$ acts against the direction of the flow $u$, that is $f\cdot u \leq 0$. 
 
The partial regularity theory gives sufficient conditions for local regularity of suitable weak solutions in space-time. Namely, letting $Q_r(z)\coloneqq B_r (x) \times (t-r^2,t)$, a space-time cylinder based\footnote{Note that here we use the convention of ``nonanticipating'' cylinders; namely that $Q$ is \emph{based} at a point $(x,t)$ when $(x,t)$ lies on the upper lid of the cylinder} at $z=(x,t)$, the central result of this theory, proved by \cite{CKN}, is the following.
\begin{theorem}[Partial regularity of the Navier--Stokes equations]\label{thm_partial_reg_NSE}
Let $u_0\in L^2 (\RR^3) $ be weakly divergence-free and let $u$ be a suitable weak solution of the Navier--Stokes equations on $\RR^3$ with initial condition condition $u_0$. There exists a universal constant $\varepsilon_0>0$ such that if 
\eqnb\label{PR1}
\frac{1}{r^2} \int_{Q_r} |u|^3 + |p|^{3/2} <\varepsilon_0
\eqne
for some cylinder $Q_r=Q_r (z)$, $r>0$, then $u$ is bounded in $Q_{r/2} (z)$.

Moreover there exists a universal constant $\varepsilon_1>0$ such that if 
\eqnb\label{PR2}
\limsup_{r\to 0} \frac{1}{r} \int_{Q_r} | \nabla u |^2 < \varepsilon_1
\eqne
then $u$ is bounded in a cylinder $Q_\rho (z)$ for some $\rho >0$.
\end{theorem}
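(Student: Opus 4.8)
The plan is to reduce both statements to one $\varepsilon$-regularity \emph{decay lemma} and to prove that lemma by a blow-up/compactness argument in the spirit of \cite{lin_1998}; the iteration of scale-invariant quantities of \cite{CKN} is an equivalent route resting on the same ingredients. Write $E_w(\zeta,r):=\frac1{r^2}\int_{Q_r(\zeta)}\big(|w|^3+|p_w|^{3/2}\big)$ for the scale-invariant quantity in \eqref{PR1}, $p_w$ being the pressure corresponding to $w$. Two elementary ingredients drive everything. The \emph{local energy estimate}: inserting $\varphi=\phi^2$ into \eqref{local_energy_inequality}, with $\phi\in C_0^\infty$ supported in $Q_r(\zeta)$ and equal to $1$ on $Q_{r/2}(\zeta)$ (so $|\nabla\phi|\lesssim r^{-1}$, $|\partial_t\phi|+|\Delta\phi|\lesssim r^{-2}$), and applying H\"older's inequality bounds $\sup_s r^{-1}\!\int_{B_{r/2}}|u(s)|^2+r^{-1}\!\int_{Q_{r/2}}|\nabla u|^2$ by $E_u(\zeta,r)^{2/3}+E_u(\zeta,r)$; an $L^2$--$L^6$ interpolation with Sobolev--Poincar\'e then converts such bounds back into control of $r^{-2}\!\int_{Q_r}|u|^3$ at smaller scales. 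The \emph{pressure split}: on a ball $B$ one writes $p=\sum_{i,j}\partial_{ij}\Psi\ast(u_iu_j)=p^{\mathrm{loc}}+p^{\mathrm{harm}}$ with $p^{\mathrm{loc}}:=\sum_{i,j}\partial_{ij}\Psi\ast(\chi u_iu_j)$, $\chi\in C_0^\infty$ equal to $1$ on $B$; the Calder\'on--Zygmund theorem gives $\|p^{\mathrm{loc}}(s)\|_{L^{3/2}}\lesssim\|u(s)\|_{L^3(\supp\chi)}^2$, while $p^{\mathrm{harm}}$ is harmonic in $B$, so the mean-value property yields $\rho^{-2}\!\int_{B_\rho}|p^{\mathrm{harm}}|^{3/2}\lesssim(\rho/r)\,r^{-2}\!\int_{B_r}|p^{\mathrm{harm}}|^{3/2}$ for $\rho\le r$.

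The core statement is: there exist universal $\varepsilon_0>0$ and $\theta\in(0,1/2)$ such that $E_u(z,r)<\varepsilon_0$ implies $E_u(z,\theta r)\le\tfrac12 E_u(z,r)$. I would prove this by contradiction. If it fails for every $\varepsilon_0$, pick $u_k$ and radii $r_k$ with $\varepsilon_k:=E_{u_k}(z,r_k)\to0$ but $E_{u_k}(z,\theta r_k)>\tfrac12\varepsilon_k$, and rescale by $w_k(y,s):=\varepsilon_k^{-1/3}r_k\,u_k(x_0+r_ky,t_0+r_k^2 s)$, with corresponding pressure $\pi_k$. Then $w_k$ solves, on $Q_1$, the Navier--Stokes equations with \emph{both} the convective and the pressure terms carrying the factor $\varepsilon_k^{1/3}\to0$, together with the local energy inequality, and $\int_{Q_1}\big(|w_k|^3+|\pi_k|^{3/2}\big)=1$ while $\tfrac1{\theta^2}\int_{Q_\theta}\big(|w_k|^3+|\pi_k|^{3/2}\big)>\tfrac12$. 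The local energy inequality bounds $w_k$ uniformly in $L^\infty_tL^2_x\cap L^2_tH^1_x$ (hence, interpolating, in $L^{10/3}$) on a slightly smaller cylinder, and the equation then bounds $\partial_s w_k$ in a fixed negative-order space, so Aubin--Lions gives $w_k\to\bar w$ strongly in $L^3_{\mathrm{loc}}$. Splitting $\pi_k=\pi_k^{\mathrm{loc}}+\pi_k^{\mathrm{harm}}$ as above, $\pi_k^{\mathrm{loc}}$ converges strongly in $L^{3/2}$ by Calder\'on--Zygmund continuity (since $w_k\otimes w_k\to\bar w\otimes\bar w$ in $L^{3/2}$) and $\pi_k^{\mathrm{harm}}$ converges in $C^\infty$ in space by interior estimates, so $\pi_k\to\bar\pi$ strongly in $L^{3/2}$ near $z$. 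In the limit $\bar w$ is divergence free and solves the \emph{heat} equation, hence is smooth near $z$ (and so is $\bar\pi^{\mathrm{loc}}$); exploiting this and the spatial harmonicity of $\bar\pi^{\mathrm{harm}}$ one obtains $\tfrac1{\theta^2}\int_{Q_\theta}\big(|\bar w|^3+|\bar\pi|^{3/2}\big)\le C_0\theta$ for a universal constant $C_0$. Fixing $\theta$ with $C_0\theta<\tfrac12$ then contradicts $\tfrac1{\theta^2}\int_{Q_\theta}\big(|w_k|^3+|\pi_k|^{3/2}\big)\to\tfrac1{\theta^2}\int_{Q_\theta}\big(|\bar w|^3+|\bar\pi|^{3/2}\big)$ and the lower bound $>\tfrac12$.

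Granting the decay lemma, the first assertion follows in two steps. Iterating it gives $E_u(z,\theta^j r)\le2^{-j}\varepsilon_0$, hence $E_u(z,\rho)\lesssim(\rho/r)^\beta$ for some $\beta>0$ and all $\rho\le r$; applying the lemma first at $z$ and then, once the quantity is small enough, at all nearby apexes (using $Q_{\theta^{j+1}r}(\zeta)\subset Q_{\theta^j r}(z)$) upgrades this to $\sup_\zeta E_u(\zeta,\rho)\lesssim\rho^\beta$ for $\zeta$ in a neighbourhood of $z$ and all small $\rho$. This is a parabolic Morrey bound: $\int_{Q_\rho(\zeta)}|u\otimes u+p\,\mathbb{I}|^{3/2}\lesssim\rho^{2+\beta}$ near $z$; writing the Navier--Stokes equations as $\partial_t u-\Delta u=-\mathrm{div}(u\otimes u+p\,\mathbb{I})$ and applying a (possibly iterated) parabolic Morrey/Campanato estimate for the heat equation with this right-hand side gives $u\in C^\alpha$ near $z$ for some $\alpha>0$, in particular $u\in L^\infty(Q_{r/2}(z))$. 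For the second assertion, \eqref{PR2} with $\varepsilon_1$ small forces $r^{-1}\!\int_{Q_r}|\nabla u|^2$ to be small on a range of small scales; feeding this into the local energy estimate, the $L^2$--$L^6$ interpolation and the pressure split, and bootstrapping over dyadic scales, one shows $E_u(z,\rho)<\varepsilon_0$ for some small $\rho>0$, whence the first assertion at scale $\rho$ yields boundedness of $u$ on $Q_{\rho/2}(z)$.

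The step I expect to be the main obstacle is the pressure. Being nonlocal, its local $L^{3/2}$ norm is not controlled by local data without the decomposition $p=p^{\mathrm{loc}}+p^{\mathrm{harm}}$, and in the blow-up argument the rescaled pressures are a priori only weakly convergent, which is too weak to pass to the limit in the cubic and three-halves integrals. The remedy --- the Calder\'on--Zygmund part converges strongly once $w_k\to\bar w$ in $L^3$, while the harmonic remainder is automatically smooth in the interior with $\rho/r$ decay supplied by the mean-value property --- is what makes the argument go; in the estimates-based version the analogous point is the bookkeeping that keeps every pressure-generated error term within the scale-invariant framework so that it is absorbed by the smallness of $\varepsilon_0$. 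Everything else is interpolation, Sobolev embedding and standard parabolic regularity.
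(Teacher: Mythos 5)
This theorem is not proved in the paper at all: it is quoted as background and attributed to \cite{CKN} (with \cite{lin_1998}, \cite{ladyzhenskaya_seregin}, etc.\ listed as alternative proofs), so there is no internal argument to compare against line by line. Your sketch is a recognisable and essentially correct outline of the Lin/Ladyzhenskaya--Seregin compactness route: the decay lemma via blow-up, the pressure decomposition into a Calder\'on--Zygmund part and a harmonic remainder, the Aubin--Lions compactness, the identification of the limit as a caloric function, and the Morrey/Campanato upgrade to H\"older continuity are all the standard ingredients, and the reduction of \eqref{PR2} to \eqref{PR1} by dyadic bootstrapping is how the second assertion is usually obtained (though that step, as in CKN, is the most bookkeeping-heavy and your one-sentence treatment of it hides most of the work).

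There is, however, one point that matters specifically in the context of this paper and that your choice of route gets exactly backwards. The paper emphasises, immediately after the theorem statement, that the \emph{original} CKN proof uses only the local energy inequality --- i.e.\ it applies to weak solutions of the NSI --- whereas the compactness proofs of Lin and Ladyzhenskaya--Seregin do not. Your argument uses the Navier--Stokes equations in an essential way twice: to bound $\partial_s w_k$ in a negative-order space for Aubin--Lions, and to identify the blow-up limit $\bar w$ as a solution of the heat equation (for a vector field satisfying only the NSI, the quantity $f=\p_t u-\nu\Delta u+(u\cdot\nabla)u+\nabla p$ is a nontrivial forcing with no useful a priori bound, so neither step survives). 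Since the entire point of quoting Theorem \ref{thm_partial_reg_NSE} here is that it applies to the NSI-only solutions constructed in Theorems \ref{thm_main}--\ref{thm_main2} and to Scheffer's examples, a proof along your lines would not justify the use the paper makes of the theorem; for that one needs the CKN iteration of scale-invariant quantities, which closes using only \eqref{local_energy_inequality}, interpolation, and the pressure representation \eqref{def_of_the_corresponding_pressure}. Your sketch proves the theorem as literally stated (for suitable weak solutions of the equations), but you should flag that it is not the proof whose scope the paper relies on.
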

(The theorem is valid also for suitable weak solutions on a bounded smooth domain.) Here $\varepsilon_0,\varepsilon_1>0$ are certain universal constants (sufficiently small).
We note that the proof of the above theorem does not actually use the fact that $u$ is a suitable weak solution, but merely a weak solution to the NSI (which is not the case, however, in the subsequent alternative proofs due to \cite{lin_1998}, \cite{ladyzhenskaya_seregin} and \cite{vasseur_2007} mentioned above).

The partial regularity theorem (Theorem \ref{thm_partial_reg_NSE}) is a key ingredient in the $L_{3,\infty}$ regularity criterion for the three-dimensional Navier--Stokes equations (see Escauriaza, Seregin \& \v{S}ver\'{a}k 2003\nocite{ESS_2003}) and the uniqueness of Lagrangian trajectories for suitable weak solutions \citep{rob_sad_2009}; similar ideas have also been used for other models, such as the surface growth model $\p_t u+ u_{xxxx}+ \p_{xx} u_x^2=0$ \citep{SGM}, which can serve as a ``one-dimensional model'' of the Navier--Stokes equations \citep{blomker_romito_reg_blowup, blomker_romito_loc_ex}.

A key fact about the partial regularity theory is that the quantities involved in the local regularity criteria (that is $|u|^3$, $|p|^{3/2}$ and $|\nabla u|^2$), are known to be globally integrable for any vector field satisfying $\sup_{t>0} \| u(t) \| <\infty$, $\nabla u \in L^2 (\RR^3 \times (0,\infty ))$ (which follows by interpolation, see for example, Lemma 3.5 and inequality (5.7) in \cite{NSE_book}); thus in particular for any Leray-Hopf weak solution (by \eqref{energy_inequality_intro}). Therefore Theorem \ref{thm_partial_reg_NSE} shows that, in a sense, if these quantities localise near a given point $z\in \RR^3 \times (0,\infty)$ in a way that is ``not too bad'', then $z$ is not a singular point, and thus there cannot be ``too many'' singular points. In fact, by letting $S\subset \RR^3 \times (0,\infty )$ denote the singular set, that is 
\eqnb\label{singular_set}
S\coloneqq \{ (x,t) \in \RR^3 \times (0,\infty ) \colon u \text{ is unbounded in any neighbourhood of }(x,t)\},
\eqne
this can be made precise by estimating the ``dimension'' of $S$. Namely, a simple consequence of \eqref{PR1} and \eqref{PR2} is that
\eqnb\label{dB_dH_bounds_intro}
d_B(S)\leq 5/3,\qquad \text{ and }\qquad  d_H(S) \leq 1 ,
\eqne
respectively\footnote{In fact, \eqref{PR2} implies a stronger estimate than $d_H (S) \leq 1$; namely that $\mathcal{P}^1(S)=0$, where $\mathcal{P}^1(S)$ is the \emph{parabolic Hausdorff measure} of $S$ (see Theorem 16.2 in \cite{NSE_book} for details).}, see Theorem 15.8 and Theorem 16.2 in \cite{NSE_book}. Here $d_B$ denotes the \emph{box-counting dimension} (also called the \emph{fractal dimension} or the \emph{Minkowski dimension}) and $d_H$ denotes the \emph{Hausdorff dimension}. The relevant definitions can be found in \cite{falconer}, who also proves (in Proposition 3.4) the important property that $d_H(K) \leq d_B (K)$ for any compact set $K$. 

Very recently, \cite{buckmaster_vicol} proved nonuniqueness of weak solutions to the Navier--Stokes equations on the torus $\TT^3$ (rather than on $\RR^3$). Their solutions belong to the class $C([0,T]; L^2 (\TT^3))$, but they do not belong to the class $L^2 ((0,T); H^1 (\TT^3 ))$. Thus in particular these do not satisfy the energy inequality \eqref{energy_inequality_intro}, and so they are neither Leray-Hopf weak solutions nor weak solutions of the NSI. Moreover, the constructions of \cite{buckmaster_vicol} include weak solutions with increasing energy $\| u(t) \|$.

In this article we work towards the same goal as \cite{buckmaster_vicol}, but from a different direction. Given an open set $W\subset \RR^3$ and a nonincreasing energy profile $e\colon [0,T]\to [0,\infty )$ we construct a weak solution to the NSI such that its energy stays arbitrarily close to $e$ and its support is contained in $W$ for all times. Namely we prove the following theorem.
\begin{theorem}[Weak solutions to the NSI with arbitrary energy profile]\label{thm_main}
Given an open set $W\subset \RR^3$, $\varepsilon>0$, $T>0$ and a nonincreasing function $e\colon [0,T] \to [0,\infty )$ there exist $\nu_0>0$ and a weak solution $u$ of the NSI for all $\nu \in [0,\nu_0 ]$ such that $\supp\,u(t) \subset W$ for all $t\in [0,T]$ and 
\eqnb\label{energy_profile_thm_main}
\left| \| u(t) \| - e(t) \right| \leq \varepsilon \qquad \text{ for all } t\in [0,T].
\eqne
\end{theorem}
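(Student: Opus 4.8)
The plan is to build the solution by patching together infinitely many ``building block'' solutions that live on disjoint space-time regions, each one responsible for shedding a prescribed amount of energy. Since $e$ is nonincreasing and continuous, we may first approximate $e$ uniformly on $[0,T]$ by a piecewise-constant (right-continuous, say) nonincreasing step function $\tilde e$ taking finitely or countably many values $e_0 > e_1 > \dots$, with $\|e - \tilde e\|_\infty \le \varepsilon/2$; replacing $\tilde e$ by one with sufficiently small jumps we may assume each jump $e_{k-1}^2 - e_k^2$ is as small as we like. On the $k$-th time interval $[t_{k-1}, t_k)$ the target is to have $\|u(t)\|^2$ essentially constant and equal to $e_{k-1}^2$, then to drop it down to $e_k^2$ by time $t_k$. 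The key point is that the NSI, unlike the NSE, is an \emph{inequality}: the forcing $f$ with $f\cdot u \le 0$ gives us complete freedom to dissipate energy at whatever rate we wish, so the only real constraints are the divergence-free condition, the spatial localization, and making the local energy inequality \eqref{local_energy_inequality} genuinely hold (not just the integrated SEI).

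First I would construct a single \emph{elementary building block}: given a small ball $B \subset W$, a time subinterval $[a,b]$, an amount of energy $\delta^2 > 0$ and an ambient level $E^2$, a divergence-free vector field $v$ supported in $B\times[a,b]$ (in space; smoothly decaying near the temporal endpoints) with $\nabla v \in L^2$, $\|v(t)\|$ prescribed, and which satisfies the pointwise NSI \eqref{NSI} with its corresponding pressure \eqref{def_of_the_corresponding_pressure}. The cleanest route: take $v(x,t) = \lambda(t)\, w(x)$ for a fixed smooth, compactly supported, divergence-free profile $w$ and a scalar amplitude $\lambda$. Then $v\cdot(\p_t v - \nu\Delta v + (v\cdot\nabla)v + \nabla p) = \lambda\dot\lambda |w|^2 - \nu\lambda^2 w\cdot\Delta w + \lambda^3 w\cdot(w\cdot\nabla)w + \lambda^2 w\cdot \nabla p_w$, where $p_w$ is the pressure of $w$. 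Wherever $w \ne 0$ we can force the bracket $\le 0$ by choosing $\dot\lambda$ sufficiently negative (using that $\lambda\dot\lambda|w|^2$ dominates once $\dot\lambda$ is large in absolute value on the region $\{w\ne 0\}$, uniformly over $\nu\in[0,\nu_0]$ for $\nu_0$ small); wherever $w=0$ the inequality is trivial. This yields a time window on which $\|v(t)\|^2$ decreases from $E^2$ (the value matching the ambient level, via a normalization $\|w\|=1$ and $\lambda(a)=E$) to any smaller value. To also have \emph{constant}-energy stretches one simply sets $\dot\lambda = 0$, which forces $v\cdot(\cdots) = -\nu\lambda^2 w\cdot\Delta w + \lambda^3 w\cdot(w\cdot\nabla)w + \lambda^2 w\cdot\nabla p_w$ — not automatically $\le 0$ — so instead the constant stretches should carry a tiny residual decay $\dot\lambda < 0$, absorbed into the $\varepsilon/2$ budget. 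Actually it is cleaner to avoid genuinely constant stretches altogether: approximate $e$ by a \emph{strictly decreasing smooth} $\hat e$ with $\|\hat e - e\|_\infty \le \varepsilon$ and bounded derivative, and run a single amplitude $\lambda(t)$ with $\lambda(0) = \hat e(0)$, $\dot\lambda$ chosen at each $t$ negative enough to dominate the viscous and convective terms; this needs only one building block on all of $[0,T]$ if we are willing to make $\nu_0$ depend on $\min(-\dot{\hat e})$, which may force slowing $\hat e$ down — hence one should interleave fast-decay windows with slow-decay windows, the latter using the multi-block patching.

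So the second step is \textbf{patching}. Choose disjoint balls $B_1, B_2, \dots \subset W$ (possible since $W$ is open; shrink them so the construction fits) and disjoint or overlapping time windows; set $u = \sum_k v_k$ where each $v_k$ is an elementary block on $B_k \times I_k$. Because the balls $B_k$ are disjoint in space, at each time $t$ the field $u(t)$ is a disjoint union, so $|u(t)|^2 = \sum_k |v_k(t)|^2$, $\nabla u = \sum_k \nabla v_k$ has disjoint supports, the divergence-free condition is inherited, and crucially the corresponding pressure \eqref{def_of_the_corresponding_pressure} — which is nonlocal — is $p = \sum_{i,j}\p_{ij}\Psi * (u_iu_j) = \sum_k p_k + (\text{cross terms})$; the cross terms are smooth (convolution of the singular kernel with functions supported far from the evaluation point within each $B_k$) and can be made arbitrarily small in any norm by separating the balls, so they are harmlessly absorbed into the inequality with a little slack built into each $\dot\lambda_k$. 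Then \eqref{local_energy_inequality} for $u$ follows by summing the (already established) local energy inequalities for the $v_k$, since every test function $\varphi \ge 0$ restricts to each $B_k\times I_k$. The energy profile $\|u(t)\|^2 = \sum_k \|v_k(t)\|^2$ is then a sum of controlled decreasing pieces, and by arranging the windows $I_k$ and the target values we make $t\mapsto \|u(t)\|^2$ track $e(t)^2$ within $\varepsilon$. The SEI/energy inequality \eqref{energy_inequality_intro} is automatic once the pointwise NSI holds and $\|u(t)\|$ is (weakly) continuous and nonincreasing, which it is by construction.

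The \textbf{main obstacle} I anticipate is handling the pressure term $\lambda^2 w\cdot\nabla p_w$ in the single-block NSI: unlike the viscous term (which carries the favorable sign in the integrated sense, and in any case has the small factor $\nu$) and the convective term ($O(\lambda^3)$, hence small when $\lambda$ is small — which it is near the end of the construction but \emph{not} near the start), the pressure term is $O(\lambda^2)$ and has no sign. The fix is that we are multiplying $\dot\lambda$ against $\lambda|w|^2$, so on $\{w \ne 0\}$ we need $\dot\lambda \le -\lambda\,\frac{\nu|w\cdot\Delta w| + \lambda|w\cdot(w\cdot\nabla)w| + |w\cdot\nabla p_w|}{|w|^2}$, and the supremum of that fraction over $\{w\ne 0\}$ must be finite — which requires choosing $w$ so that $|w\cdot\Delta w|$, $|w\cdot(w\cdot\nabla)w|$ and $|w\cdot\nabla p_w|$ all vanish wherever $|w|$ does, at least as fast as $|w|^2$. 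This is a mild design constraint on the fixed profile $w$ (e.g. take $w = \nabla\times(\phi\, \mathbf{e})$ with $\phi$ a bump so that $w$ and its derivatives are comparable near the boundary of the support, or more robustly replace the ansatz $v = \lambda(t) w(x)$ by $v = \lambda(t) w(x) + (\text{lower-order corrector})$); once $w$ is fixed, the required lower bound on $-\dot\lambda$ is a finite constant times $\lambda$, which integrates to an exponential-type decay of $\|v(t)\|$ — more than enough freedom to realize any prescribed nonincreasing profile, and the construction goes through with $\nu_0$ any sufficiently small positive number. The remaining bookkeeping — smoothing at temporal junctions, the cross-pressure estimates, and the uniform-in-$\nu$ claim — is routine.
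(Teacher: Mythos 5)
There is a genuine gap, and it sits exactly where you locate the ``main obstacle'': the claim that one can choose a compactly supported divergence-free profile $w$ with $\sup_{\{w\neq 0\}}\bigl(|w\cdot\nabla p_w|+\nu|w\cdot\Delta w|\bigr)/|w|^2<\infty$ is not a mild design constraint but the crux of the whole problem, and the fixes you sketch do not deliver it. The pressure $p_w$ is nonlocal, so $\nabla p_w$ is generically nonzero on $\partial\,\mathrm{supp}\,w$, where $|w|\to 0$; hence $|w\cdot\nabla p_w|/|w|^2\sim|\nabla p_w|/|w|\to\infty$ there, and no bump-type choice $w=\nabla\times(\phi\,\mathbf{e})$ or unspecified ``lower-order corrector'' removes this. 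The same degeneration occurs for the viscous term: for a smooth compactly supported $w$ the ratio $|\Delta w|/|w|$ is unbounded near $\partial\,\mathrm{supp}\,w$, and the small factor $\nu$ does not help because the theorem asks for a fixed $\nu_0>0$. The paper resolves both points not by domination but by exact structure: it takes $u=u[0,f_t]$ axisymmetric and purely azimuthal, so that $|u|^2+2p$ is rotationally invariant and $u\cdot\nabla(|u|^2+2p)\equiv 0$ identically (equation \eqref{nonlin_term_and_p_zero}); and for the Laplacian it exploits the \emph{sign} rather than the size, via the edge-effect Lemma \ref{lemma_existence_of_f_with_Lf_rectangle} which arranges $u\cdot\Delta u=fLf\geq 0$ precisely in the boundary layer where $\partial_t|u|^2$ (which vanishes there) cannot dominate it. Your cross-pressure estimate between blocks has the same flaw: ``small in any norm'' is not enough, since it must be small relative to $|w_k|^2$ pointwise up to $\partial\,\mathrm{supp}\,w_k$; the paper gets the cross terms to vanish exactly, again by rotational invariance (see \eqref{the_substitute_for_linearity}).

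Even granting a finite supremum, there is a second, quantitative obstruction you do not resolve. Dominating the $O(\lambda^3)$ inertial-plus-pressure contribution forces $\dot\lambda\leq -C\lambda^2$, hence $\lambda(T)\leq\lambda(0)/(1+C\lambda(0)T)$: a forced, non-negligible relative energy loss over $[0,T]$ that is incompatible with tracking a nearly constant profile to within $\varepsilon$ when $T$ is large. Your multi-block patching cannot recover this, because at every temporal junction the local energy inequality requires $|u^{(k+1)}(\cdot,t_k)|\leq|u^{(k)}(\cdot,t_k)|$ almost everywhere (the paper's condition \eqref{what_you_need_to_combine_intro}), so energy can only be lost at switches, never injected. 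In the paper the $O(\lambda^3)$ terms are identically zero, the only forced decay is $O(\nu)$ and is absorbed by choosing $\nu_0$ small (\eqref{how_small_is_nu0_main_prop}); the switching in time is needed for a different reason (keeping $f_{k,t}^2\geq 0$ near $\partial U$), and the sharpened Lemma \ref{lemma_existence_of_f_with_Lf_rectangle}, with constants independent of the rectangle, is what guarantees that finitely many switches cover $[0,T]$. Without an exact cancellation of the pressure--convection term (or an equivalent mechanism), your scheme does not close.
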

We point out that the vector field $u$ given by the above theorem satisfies the NSI for all values of viscosity $\nu \in [0,\nu_0]$. However, we emphasize that it does not satisfy the Navier--Stokes equations (but merely the NSI). 

Our approach is inspired by some ideas of Scheffer (1985 \& 1987), who showed that the bound $d_H(S)\leq 1$ is sharp for weak solutions of the NSI (of course, it is not known whether it is sharp for suitable weak solutions of the NSE). His 1985 result is the following.
\begin{theorem}[Weak solution of NSI with point singularity]\label{thm_scheffer_single_point}
There exist $\nu_0>0$ and a vector field $\mathfrak{u}\colon \RR^3 \times [0,\infty ) \to \RR^3$ that is a weak solution of the Navier--Stokes inequality with any $\nu \in [0,\nu_0]$ such that $\mathfrak{u}(t)\in C^\infty$, $\supp \,\mathfrak{u}(t)\subset G$ for all $t$  for some compact set $G\Subset \RR$ (independent of $t$). Moreover $\mathfrak{u}$ is unbounded in every neighbourhood of $(x_0,T_0)$, for some $x_0 \in \RR^3$, $T_0>0$.
\end{theorem}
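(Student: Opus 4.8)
\emph{Proof proposal.} The plan is to produce $\mathfrak u$ as a discretely self-similar \emph{cascade} of rescaled copies of a single building block that compresses its own support by a fixed factor over a fixed time. The guiding observation is that testing the local energy inequality \eqref{local_energy_inequality} with $\varphi(x,t)=\chi(t)\psi(x)$, where $\psi\equiv 1$ on a fixed compact set containing all the supports $\supp\mathfrak u(t)$, forces $\timeder\|\mathfrak u(t)\|^2\le 0$ in the sense of distributions; hence the $L^2$ energy cannot blow up, and the singularity at $(x_0,T_0)$ must come from \emph{concentration} — we must arrange $\|\mathfrak u(t)\|_{L^\infty}\to\infty$ while $\|\mathfrak u(t)\|\to 0$. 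This is precisely what the parabolic rescaling $u(x,t)\mapsto\lambda u(\lambda x,\lambda^2 t)$ achieves: it leaves the NSI \eqref{NSI} invariant (with the same $\nu$), multiplies the amplitude by $\lambda$, shrinks the spatial support by the factor $\lambda$, and multiplies $\|u(t)\|^2$ by $\lambda^{-1}$. We iterate it with $\lambda=2$.

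The first and main task is to extract, from the construction behind Theorem \ref{thm_main}, a single weak solution $w$ of the NSI on $B_R(0)\times[0,\tau]$ that is smooth in $x$ for each $t$ (and sufficiently regular in $(x,t)$), has $\supp w(t)\subset B_R(0)$ for all $t$, is valid for all $\nu\in[0,\nu_0]$ — for which it is enough to obtain it at $\nu=0$ with a definite margin in \eqref{NSI_rewritten}, since the defect in \eqref{NSI_rewritten} is affine in $\nu$ and hence stays non-positive for small $\nu$ — and, crucially, satisfies $w(\cdot,0)=V$ and $w(\cdot,\tau)=2V(2(\cdot-\delta))$ for a fixed smooth, compactly supported, divergence-free profile $V\not\equiv 0$ and a fixed small displacement $\delta$ with $|\delta|\le R/2$ (so that $\supp 2V(2(\cdot-\delta))=B_{R/2}(\delta)\subset B_R(0)$). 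Over the time $\tau$ this block thus doubles its amplitude, halves its $L^2$ energy, and compresses its support into a ball of half the radius, while its endpoint states $V$ and $2V(2(\cdot-\delta))$ are chosen precisely so that rescaled copies glue together continuously in time.

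Given $w$, fix $T_0>0$, put $\tau=\tfrac34 T_0$ and $s_n=\tfrac43\tau(1-4^{-n})$ (so $s_0=0$, $s_n\uparrow T_0$, $s_{n+1}-s_n=\tau 4^{-n}$), and on the slab $[s_n,s_{n+1}]$ define generation $n$ by $\mathfrak u(x,t):=2^n w(2^n(x-p_n),4^n(t-s_n))$, where $p_0=0$ and $p_{n+1}=p_n+2^{-n}\delta$; set $\mathfrak u\equiv 0$ for $t\ge T_0$. Then $\supp\mathfrak u(t)\subset B_{R2^{-n}}(p_n)$ on the $n$-th slab, $p_n\to x_0:=2\delta$, and all the supports lie in a fixed compact $G$ (e.g.\ $G=\overline{B_{2R}(0)}$), which can be made arbitrarily small by taking $R$ small and translating; moreover $\mathfrak u(\cdot,t)\in C^\infty$ for every $t$ and $\mathfrak u$ is continuous in time, including across $t=T_0$, where $\|\mathfrak u(t)\|^2\sim 2^{-n}\|V\|^2\to 0$. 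One then checks that $\mathfrak u$ satisfies the standing assumptions of Definition \ref{def_weak_sol_of_NSI}: $\sup_t\|\mathfrak u(t)\|^2\le\sup_t\|w(t)\|^2$ and $\int_0^\infty\|\nabla\mathfrak u(t)\|^2\,\d t=\sum_n 2^{-n}\int_0^\tau\|\nabla w(t)\|^2\,\d t<\infty$ by the scaling; and \eqref{local_energy_inequality} holds globally by splitting $\int_0^\infty$ over the slabs, integrating by parts on each (legitimate by the regularity of $w$), using that \eqref{NSI_rewritten} holds on each slab, and noting that the boundary terms at the junction times cancel because $\mathfrak u$ is time-continuous there, the resulting series of non-positive slab contributions converging absolutely by the integrability just noted (and the consequent integrability of $|\mathfrak u|^3$ and $|p|^{3/2}$). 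Finally $\mathfrak u$ is unbounded in every neighbourhood of $(x_0,T_0)$: for large $n$ the set $B_{R2^{-n}}(p_n)\times\{s_n\}$ lies inside the neighbourhood, and there $|\mathfrak u|$ attains $\sim 2^n\|V\|_{L^\infty}\to\infty$.

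The bookkeeping in the last step is routine; the only genuinely hard point is the building block $w$. The implied forcing $f=\partial_t w-\nu\Delta w+(w\cdot\nabla)w+\nabla p$ must satisfy $f\cdot w\le 0$ \emph{pointwise}, and since the pressure $p$ corresponding to $w$ is \emph{nonlocal} it does not vanish near $\partial(\supp w)$, precisely where $|w|$ is too small to dominate the remaining terms of \eqref{NSI_rewritten}; making a flow shrink its support while keeping the defect in \eqref{NSI_rewritten} non-positive everywhere — in particular along the moving boundary of the support — is exactly where the work lies. I expect this to be handled as in the proof of Theorem \ref{thm_main}, via a carefully arranged hierarchy of small auxiliary ``corrector'' structures near the support boundary together with a suitable choice of the (time-dependent) amplitudes so that the $\partial_t$ term in \eqref{NSI_rewritten} absorbs the rest; it is this internal hierarchy of ever-finer structures, combined with the self-similar cascade above, that should confine the singular set of $\mathfrak u$ to the single point $(x_0,T_0)$.
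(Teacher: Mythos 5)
Your cascade is, in its bookkeeping, exactly the paper's own switching procedure from Section \ref{sec_construction_of_scheffer}: the paper rescales a single building block by $\Gamma(x)=\tau x+z$ and $t\mapsto\tau^{-2}t$ on slabs $[t_j,t_{j+1}]$ with $t_j=\mathcal{T}\sum_{k<j}\tau^{2k}$, and your computations of the slab lengths, of $\|\mathfrak u(t)\|^2\sim 2^{-n}$ and of the summability of $\int\|\nabla\mathfrak u\|^2$ are correct and match the paper's. The genuine gap is the building block $w$, which is the entire content of the theorem, and the route you indicate for constructing it cannot work. The mechanism behind Theorem \ref{thm_main} (and Lemmas \ref{lem_1alpha}--\ref{lem_1}) uses structures of the form $(0,f_t,\phi)$, for which $u[f]\cdot\nabla\bigl(|u[f]|^2+2p[f]\bigr)=0$ by \eqref{nonlin_term_and_p_zero}; the NSI then reduces on $\{\phi=1\}$ to $\p_t|u|^2\le 2\nu\, u\cdot\Delta u$, which forces $|u(x,t)|$ to be essentially pointwise nonincreasing in time. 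No hierarchy of boundary ``correctors'' changes this: in that framework the $\p_t$ term must absorb only nonpositive contributions, so the amplitude can never double. The amplitude gain \eqref{u_grows} comes from a completely different source, namely two disjointly supported structures $(v_1,f_1,\phi_1)$, $(v_2,f_2,\phi_2)$ with \emph{nonzero} swirl components $v_i$, arranged so that the nonlocal pressure interaction term $u\cdot\nabla p$ between the two components is large and of a favourable sign; this is Theorem \ref{thm_scheffers_structure}, which the paper imports from \cite{scheffer_stuff} rather than deriving from the machinery of Theorem \ref{thm_main}. Without that input your block $w$ does not exist by your proposed means.

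A second, smaller issue: you require the exact terminal identity $w(\cdot,\tau)=2V(2(\cdot-\delta))$ so that $\mathfrak u$ is continuous across the junctions and the boundary terms cancel. This is both stronger than necessary and not what the construction delivers. The correct gluing condition, stated as \eqref{what_you_need_to_combine_intro}, is the one-sided pointwise inequality $|u^{(j)}(x,t_j)|\le|u^{(j-1)}(x,t_j)|$: the local energy inequality in the form \eqref{alternative_LEI} tolerates an instantaneous \emph{drop} of $|u|^2$ at a junction, since the difference of the two boundary terms then has the favourable sign. The gain estimate \eqref{u_grows} is precisely an inequality of this type (the final state \emph{dominates} the rescaled initial state in magnitude), not an equality of vector fields; insisting on equality would impose an endpoint-constrained problem that the pressure-interaction construction does not solve, whereas using the inequality makes the gluing immediate.
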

It is clear, using an appropriate rescaling, that the statement of the above theorem is equivalent to the one where $\nu= 1$ and $(x_0,T_0)=(0,1)$. Indeed, if $\mathfrak{u}$ is the velocity field given by the theorem then $\sqrt{T_0/\nu_0 } \mathfrak{u} (x_0+ \sqrt{T_0 \nu_0 }x ,T_0 t)$ satisfies Theorem \ref{thm_scheffer_single_point} with $\nu_0=1$, $(x_0,T_0)=(0,1)$. 

In a subsequent paper \cite{scheffer_nearly} constructed weak solutions of the Navier--Stokes inequality that blow up on a Cantor set $S\times \{ T_0 \}$ with $d_H (S)\geq \xi$ for any preassigned $\xi \in (0,1)$. 
\begin{theorem}[Nearly one-dimensional singular set]\label{thm_scheffer_1D}
Given $\xi \in (0,1)$ there exists $\nu_0>0$, a compact set $G\Subset \RR^3$ and a function $\mathfrak{u}\colon \RR^3 \times [0,\infty ) \to \RR^3$ that is a weak solution to the Navier--Stokes inequality such that $u(t) \in C^\infty$, $\supp \,\mathfrak{u}(t)\subset G$ for all $t$, and 
\[
\xi \leq d_H (S) \leq 1 ,
\]
where $S$ is the singular set (recall \eqref{singular_set}).
\end{theorem}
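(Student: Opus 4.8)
The plan is to glue together countably many rescaled copies of the single-point blow-up field $\mathfrak{u}_0$ of Theorem~\ref{thm_scheffer_single_point}, one for each interval of a self-similar Cantor set $C$, arranged so that their blow-up points accumulate exactly along an isometric image of $C$; the Hausdorff dimension of $C$ will then be the Hausdorff dimension of the singular set. One half of the conclusion is immediate: whatever we build will be a weak solution of the NSI, hence subject to the partial regularity theory of Caffarelli, Kohn \& Nirenberg (Theorem~\ref{thm_partial_reg_NSE}), so the bound $d_H(S)\le 1$ of \eqref{dB_dH_bounds_intro} holds automatically. The content is therefore to produce a weak solution of the NSI, smooth in each time slice and supported in one fixed compact set, with $d_H(S)\ge\xi$.

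\emph{Construction.} Fix an integer $m$, large in a sense made precise at the end, and set $k\coloneqq\lceil m^{\xi}\rceil$, so that $\xi\le\log k/\log m<1$ (the second inequality because $k<m$ once $m$ is large). Let $C\subset[0,1]$ be the self-similar Cantor set obtained by repeatedly splitting each interval into $m$ equal pieces and keeping $k$ of them, evenly spread out, so that the $k^{n}$ surviving intervals of generation $n$, each of length $m^{-n}$, are pairwise separated by at least $L\,m^{-n}$, where $L\coloneqq\lfloor m/k\rfloor\asymp m^{1-\xi}$; then $\dim_H C=\log k/\log m$. Choose an increasing, bounded sequence $h_1<h_2<\cdots\uparrow h_\infty$ with $h_{n+1}-h_n\asymp L\,m^{-n}$. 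Normalising $\mathfrak{u}_0$ so that it blows up at $(0,T_0)$, is supported in the slab $\{0\le t<T_0\}$, and has $\supp\mathfrak{u}_0(\cdot,t)\subset B_{1/4}(0)$ for every $t$, we assign to each generation-$n$ interval $I$, with centre $c_I$, the field
\[
u_I(x,t)\coloneqq m^{n}\,\mathfrak{u}_0\!\left(m^{n}\bigl(x-c_I\mathbf{e}_1-h_n\mathbf{e}_2\bigr),\ m^{2n}(t-T_0)+T_0\right),
\]
obtained from $\mathfrak{u}_0$ by a space-time translation and the NSI-invariant scaling $u\mapsto\lambda u(\lambda\,\cdot,\lambda^2\,\cdot)$ with $\lambda=m^{n}$. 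Thus each $u_I$ is itself a weak solution of the NSI for every $\nu\in[0,\nu_0]$, blows up only at $z_I\coloneqq(c_I\mathbf{e}_1+h_n\mathbf{e}_2,\,T_0)$, is supported in time in $[T_0-T_0m^{-2n},\,T_0)$, and has $\supp u_I(\cdot,t)\subset B_{m^{-n}/4}(c_I\mathbf{e}_1+h_n\mathbf{e}_2)$. The choice of the heights $h_n$ and of the radii ensures that the spatial supports of all the $u_I$, over all generations, are pairwise disjoint at every time, and that they, together with all the $z_I$, lie in one fixed compact set $G$. Set $\mathfrak{u}\coloneqq\sum_I u_I$, and $\mathfrak{u}:=0$ for $t\ge T_0$.

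\emph{The routine part.} At each $t<T_0$ only the finitely many generations with $m^{2n}\le T_0/(T_0-t)$ are active, so $\mathfrak{u}(t)$ is a finite sum of smooth functions, hence smooth; and $\mathfrak{u}(t)=0$ for $t\ge T_0$. Since the supports are disjoint, $\|\mathfrak{u}(t)\|^2=\sum_I\|u_I(t)\|^2$ and $\|\nabla\mathfrak{u}\|_{L^2(\RR^3\times(0,\infty))}^2=\sum_I\|\nabla u_I\|_{L^2(\RR^3\times(0,\infty))}^2$, while the scaling gives $\sup_t\|u_I(t)\|^2=m^{-n}\sup_s\|\mathfrak{u}_0(s)\|^2$ and $\|\nabla u_I\|_{L^2(\RR^3\times(0,\infty))}^2=m^{-n}\|\nabla\mathfrak{u}_0\|_{L^2(\RR^3\times(0,\infty))}^2$; summing the $k^{n}$ contributions of each generation gives geometric series of ratio $k/m<1$, so $\sup_t\|\mathfrak{u}(t)\|<\infty$ and $\nabla\mathfrak{u}\in L^2(\RR^3\times(0,\infty))$. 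Finally, away from $t=T_0$, and near any spatial point that is not a limit of the $z_I$, $\mathfrak{u}$ is a finite sum of smooth functions and so locally bounded; along any nested chain $I^{(1)}\supset I^{(2)}\supset\cdots$ of surviving intervals one has $c_{I^{(n)}}\to c_\infty\in C$ and $z_{I^{(n)}}\to(c_\infty\mathbf{e}_1+h_\infty\mathbf{e}_2,\,T_0)$, and $\mathfrak{u}$ is unbounded near this limit (near $z_{I^{(n)}}$, $\mathfrak{u}$ equals $u_{I^{(n)}}$ plus a locally bounded remainder). Hence $S$ is the union of the countable set $\{z_I\}$ with the isometric copy $(C\mathbf{e}_1+h_\infty\mathbf{e}_2)\times\{T_0\}$ of $C$, and $d_H(S)=\dim_H C\in[\xi,1)$, in agreement with the free upper bound.

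\emph{The crux and the main obstacle.} It remains to verify that $\mathfrak{u}$ satisfies the NSI, and this is the one delicate point, precisely because the pressure is a \emph{nonlocal} functional of $u$, so that superposing solutions of the NSI is not a mere local gluing. As the $u_I$ have disjoint supports, $u_iu_j$ carries no cross terms and the pressure corresponding to $\mathfrak{u}$ is $p=\sum_I p_I$, with $p_I$ the pressure corresponding to $u_I$; hence on $\supp u_I(\cdot,t)$ the residual $f\coloneqq\p_t\mathfrak{u}-\nu\Delta\mathfrak{u}+(\mathfrak{u}\cdot\nabla)\mathfrak{u}+\nabla p$ equals $f_I+\sum_{J\ne I}\nabla p_J$, where $f_I$ is the residual of $u_I$ alone, so that $f\cdot\mathfrak{u}=f_I\cdot u_I+u_I\cdot\sum_{J\ne I}\nabla p_J$ there (and $f\cdot\mathfrak{u}=0$ off all the supports, where $\mathfrak{u}=0$). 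To force this to be $\le0$ one must use a \emph{quantitatively strengthened} building block: that $\mathfrak{u}_0$ can be constructed with a uniform slack $f_0\cdot\mathfrak{u}_0\le-\mu_0|\mathfrak{u}_0|$ for a fixed $\mu_0>0$, which after scaling becomes $f_I\cdot u_I\le-\mu_0\,m^{3n}\,|u_I|$ on $\supp u_I$. For the cross terms, disjointness gives $\mathrm{dist}(\supp u_I,\supp u_J)\gtrsim L\,m^{-\min(n,n_J)}$ (with $n_J$ the generation of $J$), so the far-field bound $|\nabla p_J(x,t)|\lesssim\mathrm{dist}(x,\supp u_J)^{-4}\,\|u_J(t)\|^2$, combined with $\|u_J(t)\|^2\lesssim m^{-n_J}$ and a count of how many $u_J$ of each generation fall into each dyadic shell around $\supp u_I$, yields --- on summing over the generations a geometric series whose convergence rests exactly on $k/m<1$, i.e.\ on $\dim_H C<1$ ---
\[
\sum_{J\ne I}|\nabla p_J|\ \le\ C_{k,m}\,L^{-3}\,m^{3n}\qquad\text{on }\supp u_I,
\]
with $C_{k,m}L^{-3}\to0$ as $m\to\infty$. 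Choosing $m$ (hence $L\asymp m^{1-\xi}$) large enough that $C_{k,m}L^{-3}\le\mu_0$ makes $f\cdot\mathfrak{u}\le0$ everywhere off the measure-zero set $S$; multiplying by a non-negative $\varphi\in C_0^\infty(\RR^3\times(0,\infty))$ and integrating by parts, as in the passage from \eqref{NSI_rewritten} to \eqref{local_energy_inequality} (cutting out and shrinking a neighbourhood of $S$, using that $|\mathfrak{u}|^3,|p|^{3/2},|\nabla\mathfrak{u}|^2$ are globally integrable), then shows that $\mathfrak{u}$ is a weak solution of the NSI for every $\nu\in[0,\nu_0]$. The main obstacle is thus to establish this last estimate: one needs the earlier construction of $\mathfrak{u}_0$ to provide the uniform slack $f_0\cdot\mathfrak{u}_0\le-\mu_0|\mathfrak{u}_0|$ (or an equivalent ``room to spare'', e.g.\ a splitting of the NSI into a linear part and a sign-definite nonlinear part, which would instead permit scaling the amplitudes down at will), and one needs the geometric layout --- the auxiliary $\mathbf{e}_2$-direction and the large separation constant $L$ --- to make the accumulated long-range pressure of the infinitely many finer blocks clustering near any given block fit beneath that slack. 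Running the same scheme on top of the energy-profile construction of Theorem~\ref{thm_main} rather than on $\mathfrak{u}_0$ would, in addition, prescribe the energy profile, yielding the sharpening announced in the abstract.
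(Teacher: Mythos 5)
Your architecture is genuinely different from the paper's. The paper never superposes different generations of the Cantor construction at the same time: its solution on the time interval $[t_j,t_{j+1})$ is a single field $u^{(j)}$ consisting of the $M^j$ disjointly supported, coaxial copies of generation $j$ only (Theorem \ref{thm_scheffers_structure_1D} asserts that this one configuration satisfies the NSI classically and grows by the factor $\tau^{-1}$), and consecutive generations are glued \emph{in time} via the switching criterion \eqref{what_you_need_to_combine_intro}, i.e.\ the pointwise inequality \eqref{cantor_decrease_at_switching_of_ujs} at the switching instant. That route never has to control the pressure interaction between a block and the infinitely many finer blocks, because the finer blocks do not yet exist; the hard analytic content is instead the growth estimate \eqref{u_grows_Cantor} for a single generation, delegated to the cited construction. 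You instead run all generations simultaneously, offset transversally, and must beat the accumulated long-range pressure gradients of infinitely many foreign blocks by a margin in the NSI of each block.

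That is where the genuine gap lies. Your argument requires the single-point building block to satisfy a uniform pointwise slack $f_0\cdot\mathfrak{u}_0\le-\mu_0|\mathfrak{u}_0|$, which is neither stated in Theorem \ref{thm_scheffer_single_point} nor true for the constructions used here. In these constructions the NSI is satisfied near the edge of the support (the region $\{\phi<1\}$) only because the transport term vanishes by axisymmetry and $u\cdot\Delta u=fLf\ge0$ there; the surplus is therefore $\nu fLf$, which degenerates as one approaches $\partial(\supp u)$ and is identically zero when $\nu=0$ (a case the theorem must cover). Meanwhile your copies are \emph{not} coaxial (they sit at different heights $h_n\mathbf{e}_2$ and positions $c_I\mathbf{e}_1$), so the symmetry cancellation $u\cdot\nabla q=0$ of \eqref{prop_of_structure_2}--\eqref{the_substitute_for_linearity}, which is the paper's only mechanism for superposing disjointly supported solutions, is unavailable, and the cross term $u_I\cdot\nabla p_J$ is generically nonzero in exactly the annular region near $\partial(\supp u_I)$ where the slack vanishes. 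Rescaling amplitudes does not help either: the cross pressure gradient is quadratic in $u$, so the ratio of cross term to available dissipative slack is scale-invariant. Your dyadic counting of $\sum_{J\ne I}|\nabla p_J|$ and the resulting bound $C_{k,m}L^{-3}m^{3n}$ on $\supp u_I$ are plausible, and the dimension count for $S$ is fine, but the inequality $f\cdot\mathfrak{u}\le0$ fails near the boundary of each block's support as the construction stands, so the central step is not closed. To repair it you would either have to build a new blow-up block with a genuine uniform margin (a substantial new construction), or adopt the paper's time-sequential switching so that cross-generation pressure interactions never arise.
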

The author's previous work, \cite{scheffer_stuff} provides a simpler presentation of Scheffer's constructions of $\mathfrak{u}$ from Theorems \ref{thm_scheffer_single_point} and \ref{thm_scheffer_1D} and provides a new light on these constructions. In particular he introduces the concepts of a \emph{structure} (which we exploit in this article, see below), the \emph{pressure interaction function} and the \emph{geometric arrangement}, which articulate the the main tools used by Scheffer to obtain a blow-up, but also describe, in a sense, the geometry of the NSI and expose a number of degrees of freedom available in constructing weak solutions to the NSI. Furthermore, it is shown in \cite{scheffer_stuff} how one can obtain a blow-up on a Cantor set (Theorem \ref{thm_scheffer_1D}) by a straightforward generalisation of the blow-up at a single point (Theorem \ref{thm_scheffer_single_point}). 

It turns out that the construction from Theorem \ref{thm_main} can be combined with Scheffer's constructions to yield a weak solution to the Navier--Stokes inequality with both the blow-up and the prescribed energy profile.
\begin{theorem}[Weak solutions to the NSI with blow-up and arbitrary energy profile]\label{thm_main2}
Given an open set $W\subset \RR^3$, $\varepsilon>0$, $T>0$ and a nonincreasing function $e\colon [0,T] \to [0,\infty )$ such that $e(t)\to 0$ as $t\to T$ there exists $\nu_0>0$ and a weak solution $u$ of the NSI for all $\nu \in [0,\nu_0]$ such that $\supp\,u(t) \subset W$ and
\[
\left| \| u(t) \| - e(t) \right| \leq \varepsilon \qquad \text{ for all } t\in [0,T],
\]
and the singular set $S$ of $u$ is of the form
\[
S=S'\times \{ T \},
\]
where $S'\subset \RR^3$ is a Cantor set with $d_H (S') \in [\xi , 1 ]$ for any preassigned $\xi\in (0,1)$.
\end{theorem}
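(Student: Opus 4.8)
\section*{Proof proposal for Theorem~\ref{thm_main2}}

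The plan is to glue together two ingredients supported on disjoint time intervals: on an initial interval $[0,T-\delta_2]$ we use a version of the vector field from Theorem~\ref{thm_main} (more precisely Corollary~\ref{thm_cor_main} with $p=2$) carrying the prescribed energy profile, and on a short terminal interval $[T-\delta_2,T]$ we insert a parabolically rescaled and translated copy of Scheffer's blow-up field from Theorem~\ref{thm_scheffer_1D}, arranged so that its blow-up occurs exactly at time $T$. The crucial point is that if the two pieces $u_1,u_2$ are never simultaneously nonzero, then at every time instant the pressure function \eqref{def_of_the_corresponding_pressure} of $u_1+u_2$ coincides with the pressure of whichever piece is active (the cross terms $u_{1,i}u_{2,j}$ in the convolution vanish), and hence the local energy inequality \eqref{local_energy_inequality} for $u_1+u_2$, after splitting the space--time integral according to which piece is active, reduces to the sum of the local energy inequalities for $u_1$ and for $u_2$. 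So \emph{no} pressure-interaction estimate between the two pieces is needed; this is the structural reason the hypothesis $e(t)\to0$ as $t\to T$ appears, since it is exactly what allows $u_1$ to be switched off near $t=T$ without spoiling the energy approximation, leaving the singular piece $u_2$ in complete isolation.

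In detail, given $\varepsilon>0$ I would first choose $\delta_1>0$ with $e(t)\le \varepsilon/2$ on $[T-\delta_1,T]$ (possible as $e$ is nonincreasing and $e(t)\to0$). Applying Corollary~\ref{thm_cor_main} ($p=2$) to the profile $e$ on $[0,T]$ with accuracy $\varepsilon/2$ and the set $W$, and appealing to the fact that the construction underlying Theorem~\ref{thm_main} can be arranged so that the resulting field $u_1$ vanishes identically for $t\in[T-\delta_1,T]$, I obtain a weak solution $u_1$ of the NSI (for all $\nu$ in some interval $[0,a]$, $a>0$) with $\supp u_1(t)\subset W$, with $u_1\equiv0$ on $[T-\delta_1,T]$, and with $\bigl|\|u_1(t)\|-e(t)\bigr|\le \varepsilon/2$ on $[0,T]$. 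Next let $\mathfrak u$ be Scheffer's field from Theorem~\ref{thm_scheffer_1D} for the given $\xi$; after the rescaling indicated in the paper we may assume its blow-up is at $(0,1)$, its singular set has the form $S_0\times\{1\}$ with $S_0$ a Cantor set satisfying $d_H(S_0)\in[\xi,1]$, $\supp \mathfrak u(s)\subset G$ for a fixed compact $G$, and $\mathfrak u$ is a weak solution of the NSI for all $\nu\in[0,b]$, $b>0$; moreover, possibly after a harmless modification of the construction, $\mathfrak u$ vanishes for $s$ in a neighbourhood of $0$. For parameters $\delta_2\in(0,\delta_1]$ and $b_0\in\RR^3$ I set $u_2(x,t):=\mathfrak u\bigl((x-b_0)/\delta_2,\,(t-T+\delta_2)/\delta_2\bigr)$ for $t\ge T-\delta_2$ and $u_2:=0$ for $t<T-\delta_2$. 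Using that $\Psi$ is homogeneous of degree $-1$ one checks that this equal-factor parabolic rescaling sends a weak solution of the NSI with viscosity $\nu$ to one with viscosity $\delta_2\nu$, that it changes $\|u_2(t)\|$ and $\|\nabla u_2\|_{L^2}$ only by fixed powers of $\delta_2$, that $\supp u_2(t)\subset b_0+\delta_2 G$, that $u_2$ vanishes in a neighbourhood of $t=T-\delta_2$ (so no energy jump is introduced), and that the blow-up of $u_2$ is precisely at $t=T$. Choosing $\delta_2$ so small that $\delta_2^{3/2}\sup_s\|\mathfrak u(s)\|\le \varepsilon/2$ and $b_0$ so that $b_0+\delta_2 G\subset W$, I finally set $u:=u_1+u_2$ (extended by $u_2$ for $t>T$).

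It then remains to verify the four required properties. Since $\delta_2\le\delta_1$, the supports of $u_1$ and $u_2$ in time are disjoint, so at each $(x,t)$ the field $u$ equals $u_1$, or $u_2$, or $0$; divergence-freeness, $\sup_t\|u(t)\|<\infty$ and $\nabla u\in L^2$ are inherited from the two pieces. The pressure of $u$ at a time $t$ depends only on $u(t)$, hence equals the pressure of the active piece; substituting into \eqref{local_energy_inequality} and splitting the space--time domain into $\{u=u_1\}$ and $\{u=u_2\}$ shows the LEI for $u$ is the sum of the LEIs for $u_1$ and $u_2$, valid for all $\nu\in[0,\nu_0]$ with $\nu_0:=\min\{a,\delta_2 b\}>0$. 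For the energy: on $[0,T-\delta_2]$ one has $u=u_1$, so $\bigl|\|u(t)\|-e(t)\bigr|\le\varepsilon/2$; on $[T-\delta_2,T]\subset[T-\delta_1,T]$ one has $u=u_2$ and $e(t)\le\varepsilon/2$, so $\bigl|\|u(t)\|-e(t)\bigr|\le\|u_2(t)\|+e(t)\le\varepsilon$. Finally $u=u_1$ is bounded for $t<T-\delta_2$, so the singular set of $u$ is that of $u_2$, which is the image of $S_0\times\{1\}$ under the affine bijection $(y,s)\mapsto(b_0+\delta_2 y,\;T-\delta_2+\delta_2 s)$, i.e.\ $(b_0+\delta_2 S_0)\times\{T\}$, and $b_0+\delta_2 S_0\subset W$ is a Cantor set with $d_H(b_0+\delta_2 S_0)=d_H(S_0)\in[\xi,1]$.

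The parts I expect to require the most care are the two ``switching'' claims, which are not literally contained in the statements quoted above: that the Theorem~\ref{thm_main} construction can be arranged to vanish on a terminal time interval (a structure carrying a nonincreasing profile that decays to near zero should be switchable off by letting it dissipate its remaining energy over a short final sub-interval), and that Scheffer's construction can be arranged to vanish near its initial time, equivalently to start from rest (needed so that inserting the rescaled copy on $[T-\delta_2,T]$ introduces no jump, which would violate the energy inequality). Both reduce to inspecting the explicit, modular ``structure''-based constructions, and this is also where the hypothesis $e(t)\to0$ earns its keep: it is precisely this decay that lets $u_1$ be turned off near $t=T$ while keeping $\|u(t)\|$ within $\varepsilon$ of $e(t)$, and hence lets the delicate, singular piece $u_2$ occupy the terminal interval with no pressure interaction to control.
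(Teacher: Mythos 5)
There is a fatal gap in the temporal-gluing strategy: the step where you ``arrange Scheffer's construction to vanish near its initial time, equivalently to start from rest'' is not a matter of care --- it is impossible for a weak solution of the NSI. For a field with uniformly compact spatial support, testing \eqref{local_energy_inequality} with $\varphi(x,t)=\chi(x)\psi(t)$, where $\chi\equiv 1$ on a neighbourhood of the support, kills the flux and $\Delta\varphi$ terms and yields the energy inequality $\|u(S')\|^2+2\nu\int_S^{S'}\|\nabla u\|^2\leq\|u(S)\|^2$ for a.e.\ $S<S'$ (this is exactly the passage from \eqref{local_energy_inequality} to \eqref{alternative_LEI}). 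Hence $t\mapsto\|u(t)\|$ is nonincreasing, and a weak solution of the NSI that vanishes on an open time interval must vanish for all later times. Your $u_2$, which is required to be identically zero on a neighbourhood of $t=T-\delta_2$ and then to blow up at $t=T$, therefore cannot exist; and without the vanishing near $t=T-\delta_2$ you introduce a jump in $\int|u|^2\varphi$ at the switch-on time, which violates the LEI directly. The same obstruction explains why no construction in the paper ever ``switches on'' a piece from zero: pieces may only be switched \emph{off}, or replaced at a switching time by something pointwise smaller in magnitude, cf.\ \eqref{what_you_need_to_combine_intro}.

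The paper avoids this by making the two pieces disjoint in \emph{space} rather than in time. The Scheffer field is rescaled to have small amplitude and small support, and its initial datum is ``pre-loaded'' from $t=0$: Lemma \ref{lem_1} supplies a nearly-constant-in-time NSI solution $u_1$ on $[0,T'']$ ending with $|u_1(\cdot,T'')|\geq|u_0(\cdot,0)|$, so one may switch into the rescaled Scheffer evolution at $T''$ by \eqref{what_you_need_to_combine_intro}. Meanwhile the energy-carrying piece $u_2$ from Theorem \ref{thm_main} lives on a spatially disjoint rectangle with the shifted profile $e-\varepsilon$, and the superposition $u_1+u_2$ is legitimate not because the pressure interaction vanishes (it does not --- the Newtonian potential is nonlocal) but because for axisymmetric fields of the form $u[0,f]$ the term $u\cdot\nabla p$ vanishes identically by rotational invariance, see \eqref{tempusiek} and \eqref{the_substitute_for_linearity}. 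Your observation that temporally disjoint supports decouple the pressure is correct but moot, since the temporal decoupling itself cannot be achieved. If you want to salvage your write-up, you must replace the terminal-interval insertion of $u_2$ by a precursor running from $t=0$ as above.
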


The structure of the article is as follows. In Section \ref{sec_prelims} we introduce some preliminary ideas including the notion of a \emph{structure} $(v,f,\phi)$ on an open subset $U$ of the upper half-plane 
\[ \RR^2_+ \coloneqq \{ (x_1,x_2) \colon x_2 >0 \} .\]
In Section \ref{sec_appl_of_structure} we briefly sketch how the concept of a structure is used in the constructions of Scheffer (but we will refer the reader to \cite{scheffer_stuff} for the full proof). We then illustrate some useful properties of structures of the form $(0,f,\phi )$ and we show how they can be used to generate weak solutions to the NSI on arbitrarily long time intervals. In Section \ref{sec_pf_of_thm_main} we prove our main result, Theorem \ref{thm_main}, and in Section \ref{sec_pf_of_energy+blowup} we prove Theorem \ref{thm_main2}. In the final section (Section \ref{sec_proof_of_lemma_edge}) we prove Lemma \ref{lemma_existence_of_f_with_Lf_rectangle}, which is an important ingredient of the proof of Theorem \ref{thm_main}.

\section{Preliminaries}\label{sec_prelims}
We will denote the $L^2(\RR^3 )$ norm by $\| \cdot \|$.
We denote the space of indefinitely differentiable functions with compact support in a set $U$ by $C_0^\infty (U)$. We denote the indicator function of a set $U$ by $\chi_U$. We frequently use the convention 
\[
h_{t} (\cdot ) \equiv h(\cdot , t),
\]
that is the subscript $t$ denotes dependence on $t$ (rather than the $t$-derivative, which we denote by $\p_t$).

We say that a vector field $u\colon \RR^3 \to \RR^3$ is \emph{axisymmetric} if $u(R_\theta x)=R_\theta (u(x))$ for any $\theta \in [0,2\pi )$, $x\in \RR^3$, where
\[
R_\theta (x_1,x_2,x_3) \coloneqq (x_1, x_2 \cos \phi - x_3 \sin \phi , x_2 \sin \phi + x_3 \cos \phi )
\]
is the rotation operation around the $x_1$ axis. We say that a scalar function $q \colon \RR^3 \to \RR$ is \emph{axisymmetric} if
\[
q (R_\theta x) =  q(x) \qquad \text{ for } \phi \in [0,2 \pi ), x\in \RR^3.
\]
Observe that if a vector field $u\in C^2$ and a scalar function $q\in C^1$ are axisymmetric then the vector function $(u\cdot \nabla )u$ and the scalar functions 
\eqnb\label{scalar_fcns_rotationally_invariant}
|u|^2,\quad \mathrm{div}\, u,\quad u\cdot \nabla | u|^2,\quad u\cdot \nabla q,\quad u\cdot \Delta u \quad\text{ and }\quad \sum_{i,j=1}^3 \p_i u_j \p_j u_i 
\eqne
are axisymmetric, see Section 3.6.2 in \cite{scheffer_stuff} for details. 

Let $U\Subset \RR^2_+$. Set
\eqnb\label{R_of_U}
R(U) \coloneqq \{x\in \RR^3 \, : \, x=R_\phi (y,0) \text{ for some } \phi \in [0,2\pi ),\, y\in U \},
\eqne
the \emph{rotation of $U$}. 

Given $v=(v_1,v_2)\in C_0^\infty ( U ; \RR^2 )$ and $f\colon \RR^2 \to [0,\infty )$ supported in $\overline{U}$ and such that $f>|v|$ we define $u[v,f]\colon R(\overline{U}) \to \RR^3$ to be the axisymmetric vector field such that
\[
u[v,f] (x_1,x_2,0) \coloneqq \left( v_1(x_1,x_2),v_2(x_1,x_2), \sqrt{f(x_1,x_2)^2 - |v(x_1,x_2)|^2} \right)
\]
for $x_2>0$. In other words
\eqnb\label{def_of_us}
u[v,f](x_1,\rho , \phi) =  v_1 (x_1,\rho ) \widehat{x}_1 + v_2 (x_1,\rho ) \widehat{\rho } + \sqrt{f(x_1,\rho)^2-|v(x_1,\rho)|^2} \,\widehat{\phi } ,
\eqne
where the cylindrical coordinates $x_1,\rho, \phi$ are defined using the representation
\[
\begin{cases}
x_1 = x_1, \\
x_2 = \rho \cos \phi, \\
x_3 = \rho \sin \phi
\end{cases}
\]
and the cylindrical basis vectors $\widehat{x}_1$, $\widehat{\rho}$, $\widehat{\phi}$ are 
\eqnb\label{def_of_cylindrical_unit_vectors}
\begin{cases}
\widehat{x}_1 (x_1,\rho, \phi ) \coloneqq (1,0,0), \\
\widehat{\rho }  (x_1,\rho, \phi ) \coloneqq (0,\cos \phi, \sin \phi ), \\
\widehat{\phi }  (x_1,\rho, \phi ) \coloneqq (0,-\sin \phi, \cos \phi) .
\end{cases}
\eqne
Note that such a definition immediately gives 
\[
|u[v,f]|=f.
\]
Moreover, it satisfies some other useful properties, which we state in a lemma.
\begin{lemma}[Properties of $u{[}v,f{]}$]\label{properties_of_u[v,f]}$\mbox{}$
\begin{enumerate}
\item[\textnormal{(i)}] The vector field $u[v,f]$ is divergence free if and only if $v$ satisfies
\[
\mathrm{div} ( x_2 \, v (x_1,x_2) ) =0
\qquad \text{ for all }(x_1,x_2)\in \RR^2_+.\]
\item[\textnormal{(ii)}] If $v\equiv 0$ then
\[
\Delta u[0,f] (x_1,\rho , \phi ) = Lf(x_1,\rho) \widehat{\phi},
\]
where 
\eqnb\label{def_of_L}
Lf(x_1,x_2) \coloneqq \Delta f(x_1,x_2) + \frac{1}{x_2} \p_{x_2} f (x_1,x_2) - \frac{1}{x_2^2} f(x_1,x_2).
\eqne
In particular
\eqnb\label{Delta_u_equals_Lf}
\Delta u[0,f] (x_1,x_2,0) = (0,0,Lf(x_1,x_2)).
\eqne
\item[\textnormal{(iii)}] For all $x_1,x_2\in \RR$
\eqnb\label{d3_of_|u|_vanishes}
\p_{x_3} |u[v,f]| (x_1,x_2,0) =0.
\eqne
\end{enumerate}
\end{lemma}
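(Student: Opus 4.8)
The plan is to verify (i)--(iii) by direct computations in the cylindrical coordinates $(x_1,\rho,\phi)$ adapted to the $x_1$-axis, the same coordinates used in \eqref{def_of_us}. The key preliminary observation is that $u[v,f]$ is supported in $R(U)$, which (since $U\Subset\RR^2_+$) is a compact subset of $\RR^3$ disjoint from the $x_1$-axis; hence these coordinates are smooth on a neighbourhood of $\supp u[v,f]$ and the coordinate singularity at $\rho=0$ causes no trouble --- at points near the axis everything in sight is identically zero.

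For (i) I would recall the cylindrical expression for the divergence of an axisymmetric field $w=w_{x_1}\widehat{x}_1+w_\rho\widehat\rho+w_\phi\widehat\phi$ whose components depend only on $(x_1,\rho)$, namely $\mathrm{div}\,w=\partial_{x_1}w_{x_1}+\frac1\rho\partial_\rho(\rho w_\rho)$, the term $\frac1\rho\partial_\phi w_\phi$ dropping out by axisymmetry --- so the swirl component $\sqrt{f^2-|v|^2}\,\widehat\phi$ of $u[v,f]$ never enters. Applying this to \eqref{def_of_us} gives $\mathrm{div}\,u[v,f]=\partial_{x_1}v_1+\frac1\rho\partial_\rho(\rho v_2)=\frac1\rho\,\mathrm{div}(\rho\,v)$ off the axis, where on the right $\mathrm{div}$ is the two-dimensional divergence in the variables $(x_1,\rho)$. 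Since $\rho>0$ there and $u[v,f]\equiv 0$ near the axis, the field is divergence-free on $\RR^3$ exactly when $\mathrm{div}(x_2\,v(x_1,x_2))=0$ on $\RR^2_+$, which, $v$ being compactly supported in $U$, is the stated condition.

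For (ii), setting $v\equiv 0$ leaves only the swirl component $u[0,f]=f(x_1,\rho)\widehat\phi$, independent of $\phi$. I would then invoke the cylindrical form of the vector Laplacian: its radial and axial components involve only $\partial_\phi$ of the swirl and hence vanish, while the swirl component is $\Delta_{\mathrm{sc}}f-\rho^{-2}f$, where $\Delta_{\mathrm{sc}}f=\partial_{x_1}^2 f+\partial_\rho^2 f+\rho^{-1}\partial_\rho f$ is the scalar Laplacian of the $\phi$-independent $f(x_1,\rho)$. Comparing with \eqref{def_of_L} identifies this with $Lf(x_1,\rho)$, proving $\Delta u[0,f]=Lf(x_1,\rho)\widehat\phi$; and on $\{x_3=0,\,x_2>0\}$ one has $\phi=0$, so $\widehat\phi=(0,0,1)$ by \eqref{def_of_cylindrical_unit_vectors}, which gives \eqref{Delta_u_equals_Lf}. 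As an alternative I could avoid quoting the vector-Laplacian identity by writing $u[0,f]=\rho^{-1}f(x_1,\rho)(0,-x_3,x_2)$ in Cartesian coordinates and differentiating each component directly; this is elementary but longer.

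For (iii), since $|u[v,f]|(x_1,x_2,x_3)=f(x_1,\rho)$ with $\rho=\sqrt{x_2^2+x_3^2}$, the chain rule gives $\partial_{x_3}|u[v,f]|=\partial_\rho f(x_1,\rho)\,x_3/\rho$, which vanishes at $x_3=0$ (and on $\supp u[v,f]$ one has $x_2\neq 0$, so $\rho\neq 0$ there). I do not anticipate a genuine obstacle: the only mild care needed is the bookkeeping between the planar variable $x_2$ on $\RR^2_+$ and the cylindrical radius $\rho$, together with the fact, used repeatedly, that $\supp u[v,f]\subset R(U)$ is bounded away from the axis so that all the cylindrical formulas apply on the support and the field vanishes elsewhere.
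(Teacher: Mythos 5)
Your proof is correct and takes essentially the same route the paper intends: the paper's own proof simply declares the three claims to be easy consequences of the definition and of the standard cylindrical-coordinate formulas (deferring details to the cited reference), and your computations with the cylindrical divergence, the vector Laplacian of a pure swirl field, and the chain rule for $\rho=\sqrt{x_2^2+x_3^2}$ are exactly those details, with the support being bounded away from the $x_1$-axis correctly invoked to avoid the coordinate singularity.
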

\begin{proof}
These are easy consequences of the definition (and the properties of cylindrical coordinates), see Lemma 3.2 in \cite{scheffer_stuff} for details.
\end{proof}
Using part (ii) we can see that the term $u[0,f]\cdot \Delta u[0,f]$ (recall the Navier--Stokes inequality \eqref{NSI}), which is axisymmetric (recall \eqref{scalar_fcns_rotationally_invariant}), can be made non-negative by ensuring that $Lf$ is non-negative, since
\eqnb\label{u_times_delta_u}
u[0,f] (x_1,x_2,0) \cdot \Delta u[0,f] (x_1,x_2,0) = f (x_1,x_2) Lf (x_1,x_2)
\eqne
and $f$ is non-negative by definition. It is not clear how to construct $f$ such that $Lf\geq 0$ everywhere, but there exists a generic way of constructing $f$ which guarantees this property at points sufficiently close to the boundary of $U$ if $U$ is a rectangle. In order to state such a construction we denote (given $\eta >0$) the ``$\eta$-subset'' of $U$ by $U_\eta $, that is
\[
U_\eta \coloneqq \{ x\in U \colon \mathrm{dist} (x,\partial U )>\eta  \}.
\]
We have the following result.
\begin{lemma}[The edge effects]\label{lemma_existence_of_f_with_Lf_rectangle_prelims}
Let $U\Subset \RR^2_+$ be an open rectangle, that is $U=(a_1,b_1 ) \times (a_2,b_2)$ for some $a_1,a_2,b_1,b_2 \in \RR$ with $b_1>a_1$, $b_2>a_2 >0$. Given $\eta >0$ there exists $\delta \in (0,\eta )$ and $f\in C_0^\infty (\RR^2_+ ; [0,1])$ such that 
\[
\supp \, f = \overline{U},\quad f>0 \text{ in } U \quad \text{ with } \quad  f=1 \text{ on } U_\eta,
\]
\[
Lf >0 \quad \text{ in } U \setminus U_\delta .\]
\end{lemma}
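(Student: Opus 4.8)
The plan is to construct $f$ explicitly as a product $f(x_1,x_2) = x_2\, g(x_1,x_2)$, or rather as a small modification of such a product, so that the singular term $-x_2^{-2} f$ in the operator $L$ (recall \eqref{def_of_L}) is controlled. The point is that the terms $\Delta f$ and $x_2^{-1}\partial_{x_2} f$ in $Lf$ are bounded on $\overline U$ (since $U \Subset \RR^2_+$, so $x_2$ is bounded below there), whereas the term $-x_2^{-2} f$ is a genuine obstruction: near $\partial U$ we need $Lf$ to be \emph{strictly positive}, and $f$ must vanish on $\partial U$. So the real issue is only near the boundary, where $f$ is small; there we want the \emph{positive} contributions to $Lf$ (which come from the second derivatives of $f$ as it grows away from zero) to dominate.

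Concretely, I would first fix a smooth cutoff profile. Let $\psi \in C^\infty(\RR;[0,1])$ with $\psi \equiv 0$ on $(-\infty,0]$, $\psi \equiv 1$ on $[1,\infty)$, $\psi' \ge 0$, and — crucially — $\psi$ chosen so that near $s=0$ one has $\psi(s) = \e^{-1/s}$ (or any profile that is \emph{strictly log-convex} near $0$, i.e. $\psi''\psi - (\psi')^2 > 0$ and $\psi''>0$ on a right-neighbourhood of $0$). Then, writing $d(x) \coloneqq \mathrm{dist}(x,\partial U)$, which near $\partial U$ is smooth for a rectangle away from the corners, I would set
\[
f(x_1,x_2) \coloneqq \psi\!\left( \frac{d_1(x_1)}{\eta} \right)\, \psi\!\left( \frac{d_2(x_2)}{\eta} \right),
\]
where $d_1(x_1) \coloneqq \min(x_1-a_1,\,b_1-x_1)$ and $d_2(x_2) \coloneqq \min(x_2-a_2,\,b_2-x_2)$; this is a smooth product-type construction that avoids the corner non-smoothness of the Euclidean distance, has support exactly $\overline U$, equals $1$ on $U_\eta$, is positive in $U$, and takes values in $[0,1]$. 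It remains to check $Lf>0$ on $U\setminus U_\delta$ for some small $\delta$. By symmetry consider a point near the edge $\{x_2=a_2\}$ and away from all other edges, so there $f = \psi((x_2-a_2)/\eta)$ depends on $x_2$ only; with $s\coloneqq(x_2-a_2)/\eta$ small we compute
\[
Lf = \frac{1}{\eta^2}\psi''(s) + \frac{1}{\eta x_2}\psi'(s) - \frac{1}{x_2^2}\psi(s).
\]
Since $x_2$ is bounded (say $x_2 \le b_2$ and $x_2 \ge a_2 >0$) and $\psi(s)\to 0$ exponentially while $\psi''(s)\to +\infty$ (for the profile $\e^{-1/s}$, $\psi''(s) = (s^{-4} - 2s^{-3})\e^{-1/s}$, which is positive and dominates $\psi(s)/s$ for $s$ small), the first term beats the last for $s$ below some threshold; the middle term is non-negative. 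This gives $Lf>0$ in a one-sided neighbourhood of each edge. For the two-dimensional product near a corner the computation splits: $Lf = \eta^{-2}(\psi_1''\psi_2 + \psi_1\psi_2'') + (\eta x_2)^{-1}\psi_1\psi_2' - x_2^{-2}\psi_1\psi_2$ with $\psi_i$ the two factors; since $\psi_2''\ge 0$ there and $\psi_1''>0$ near the $x_1$-edge while $\psi_1\le 1$, the term $\eta^{-2}\psi_1''\psi_2$ already dominates $x_2^{-2}\psi_1\psi_2$ when $s_1$ is small, and likewise with the roles reversed — so in the full collar $U\setminus U_\delta$ (which is covered by one-sided edge neighbourhoods, choosing $\delta$ small enough that each point of $U\setminus U_\delta$ is within the log-convexity range of at least one factor) we get $Lf>0$.

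The main obstacle is purely this boundary competition: making the positive curvature term $\psi''/\eta^2$ genuinely dominate the negative zeroth-order term $\psi/x_2^2$ uniformly on a full collar of $\partial U$, including corners, without disturbing the requirements $f=1$ on $U_\eta$ and $0\le f\le 1$. The log-convex (exponentially flat) profile near $s=0$ is exactly what resolves this, since it forces $\psi''(s)/\psi(s) \to +\infty$ as $s\to 0^+$, overwhelming the bounded factor $x_2^2/\eta^2$; one then simply takes $\delta \in (0,\eta)$ small enough that this asymptotic dominance is already in force throughout $U\setminus U_\delta$. Since the argument is entirely elementary and the paper attributes the result to Scheffer's framework, I would at this point defer the remaining routine verification to the reference \cite{scheffer_stuff}.
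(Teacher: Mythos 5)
Your construction is correct and is essentially the proof the paper itself gives (for the sharpened Lemma \ref{lemma_existence_of_f_with_Lf_rectangle} in Appendix \ref{sec_proof_of_lemma_edge}): a product of one-dimensional exponentially flat cutoff profiles, with the key mechanism that $\psi''/\psi \to \infty$ at the boundary overwhelms the bounded negative term $-x_2^{-2}f$ in \eqref{def_of_L} (the paper packages this as $f<(x-a)^2 f''$ via a generalised Mean Value Theorem, and uses the profile $h(x)=\e^{-1/x^2}$ and the product $h_\eta(x-a_i)h_\eta(b_i-x)$ in place of your $\psi\bigl(\min(x_1-a_1,\,b_1-x_1)/\eta\bigr)$). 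The one small repair your version needs is that the $\min$ inside $\psi$ leaves a kink at the mid-lines of the rectangle unless $\eta$ is smaller than half of each side length, so one should first shrink $\eta$ (harmless, since $f=1$ on $U_{\eta'}$ with $\eta'<\eta$ implies $f=1$ on $U_\eta$, and $\delta\in(0,\eta')\subset(0,\eta)$).
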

\begin{proof}
See Lemma 3.15 in \cite{scheffer_stuff} for the proof (which is based on Section 5 in \cite{scheffer_a_sol}).
\end{proof}
In other words, we can construct $f$ that equals $1$ on the given $\eta$-subset of $U$ such that $Lf>0$ outside of a sufficiently large $\delta$-subset. We will later (in Lemma \ref{lemma_existence_of_f_with_Lf_rectangle}) refine this lemma to show that $\delta$ can be chosen proportional to $\eta$ and that $f$ is bounded away from $0$ on $U_\delta$.

We define $p^* [v,f] \colon \RR^3 \to \RR$ to be the pressure function corresponding to $u[v,f]$, that is 
\eqnb\label{def_of_p*}
p^*[v,f] (x) \coloneqq  \int_{\RR^3} \sum_{i,j=1}^3 \frac{ \p_i u_j [v,f] (y) \p_j u_i[v,f] (y)}{4\pi |x-y|} \d y ,\\
\eqne
and we denote its restriction to $\RR^2$ by $p[v,f]$, 
\eqnb\label{def_of_ps}
p[v,f](x_1,x_2)\coloneqq  p^*[v,f] (x_1,x_2,0) .
\eqne
Since $u[v,f]$ is axisymmetric, the same is true of $p^*[v,f]$ (recall \eqref{scalar_fcns_rotationally_invariant}; see also (3.22) in \cite{scheffer_stuff} for a detailed verification of this fact). In particular
\eqnb\label{dx3_of_p_is_zero}
\p_{x_3} p^* [v,f] (x_1,x_2,0) =0 \quad \text{ for all } x_1,x_2\in \RR ,
\eqne
as in Lemma \ref{properties_of_u[v,f]} (iii) above.

\subsection{A structure}
We say that a triple $(v,f,\phi )$ is a \emph{structure on $U\Subset  \RR^2_+$} if $v \in C_0^\infty (U; \RR^2 )$, $f\in C_0^\infty (\RR^2_+; [0,\infty ))$, $\phi \in C_0^\infty (U; [0,1] )$ are such that $\supp \, f = \overline{U}$,
\[\begin{split}
&\supp \, v \subset \{ \phi =1 \},\qquad \mathrm{div}\, (x_2\, v(x_1,x_2)) =0 \quad \text{ in } U \\
&\text{and}\qquad f>|v| \quad \text{ in } U\qquad \text{ with } \qquad Lf >0\quad  \text{ in } U \setminus \{ \phi =1 \}.
\end{split} \]

Note that, given a structure $(v,f,\phi )$, we obtain an axisymmetric divergence-free vector field $u[v,f]$ that is supported in $R(\overline{U})$ (which is, in particular, away from the $x_1$ axis), and such that
\[
|u[v,f](x,0)|=f(x) \qquad \text{ for }x\in \RR_+^2.
\]
Moreover we note that $(av,f,\phi )$ is a structure for any $a\in (-1,1)$ whenever $(v,f,\phi)$ is, and that, given disjoint $ U_1, U_2 \Subset \RR^2_+$ and the corresponding structures $(v_1,f_1,\phi_1)$, $(v_2,f_2,\phi_2)$, the triple $ (v_1+v_2,f_1+f_2,\phi_1+\phi_2)$ is a structure on $U_1\cup U_2$.
Observe that the role of the cutoff function $\phi$ in the definition of a structure is to cut off the edge effects as well as ``cut in'' the support of $v$. Namely, in $R(\{ \phi < 1 \})$ (recall that $R$ denotes the rotation, see \eqref{R_of_U}) we have $Lf\geq 0$ and $v=0$, and so
\eqnb\label{prop_of_structure_1}
u[v,f] \cdot \Delta u [v,f] \geq 0
\eqne
and
\eqnb\label{prop_of_structure_2}
u[v,f]\cdot \nabla q =0
\eqne
 for any axisymmetric function $q\colon \RR^3 \to \RR$. This last property (which follows from \eqref{dx3_of_p_is_zero}) is particularly useful when taking $q\coloneqq |u[v,f]|^2+2p[v,f]$ as this gives one of the terms in the Navier--Stokes inequality \eqref{NSI_rewritten}.

\subsection{A recipe for a structure}\label{sec_a_recipe}
Using Lemma \ref{lemma_existence_of_f_with_Lf_rectangle_prelims} one can construct structures on sets $U\Subset \RR^2_+$ in the shape of a rectangle (which is the only shape we will consider in this article) in a generic way. This can be done using the following steps.
\begin{enumerate}
\item[$\bullet$] First construct $w\colon U\to \RR^2$ that is weakly divergence free (that is $\int_{U} w\cdot \nabla \psi =0$ for every $\psi\in C_0^\infty (U)$) and compactly supported in $U$.

$\hspace{0.5cm}$For example one can take $w\coloneqq (x_2,x_1) \chi_{1<|(x_1,x_2)|<2}$, after an appropriate rescaling and translation (so that $\supp\, w$ fits inside $U$); such a $w$ is weakly divergence free due to the fact that $w\cdot n$ vanishes on the boundary of its support, where $n$ denotes the respective normal vector to the boundary. 
\item[$\bullet$] Next, set $v\coloneqq (J_\varepsilon w)/x_2$, where $J_\epsilon$ denotes the standard mollification and $\epsilon >0$ is small enough so that $\supp\,v\Subset U$.
\item[$\bullet$] Then construct $f$ by using Lemma \ref{lemma_existence_of_f_with_Lf_rectangle_prelims} (with any $\eta >0$) and multiplying by a constact sufficiently large so that $f>|v|$ in $U$.
\item[$\bullet$] Finally let $\phi \in C_0^\infty (U;[0,1])$ be such that $\{ \phi =1 \}$ contains $U_\delta$ (from Lemma \ref{lemma_existence_of_f_with_Lf_rectangle_prelims}) and $\supp\,v$.
\end{enumerate} 

\section{Applications of structures}\label{sec_appl_of_structure}
In this section we point out two important applications of the concept of a structure.
\subsection{The construction of Scheffer}\label{sec_construction_of_scheffer}

Here we show how the concept of a structure is used in the Scheffer construction, Theorem \ref{thm_scheffer_single_point}, which we will only use later in proving Theorem \ref{thm_main2}. 

We show below how Theorem \ref{thm_scheffer_single_point} can be proved in a straightforward way using the following theorem.
\begin{theorem}\label{thm_scheffers_structure}
There exist a set $U\Subset \RR_+^2$, a structure $(v,f,\phi )$ and $\mathcal{T}>0$ with the following property: there exist smooth time-dependent extensions $v_t$, $f_t$ ($t\in [0,\mathcal{T}]$) of $v$, $f$, respectively, such that $v_0=v$, $f_0=f$, $(v_t,f_t,\phi )$ is a structure on $U$ for each $t\in [0,\mathcal{T}]$. Moreover, for some $\nu_0>0$ the vector field
\[ u(t)\coloneqq u[v_t,f_t] \]
satisfies the NSI \eqref{NSI} in the classical sense for all $\nu \in [0,\nu_0]$ and $t\in [0,\mathcal{T}]$ as well as admits a large gain in magnitude of the form 
\eqnb\label{u_grows}
\left| u(\tau x +z , \mathcal{T}) \right| \geq \tau^{-1} \left| u(x,0) \right|, \qquad x\in \RR^3,
\eqne
for some $\tau \in (0,1)$, $z\in \RR^3$.
\end{theorem}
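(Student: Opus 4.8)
The plan is to build Scheffer's structure explicitly on a carefully chosen rectangle $U\Subset\RR^2_+$, following the ``recipe'' of Section \ref{sec_a_recipe}, and then to time-evolve it so that the axial component of the velocity is pumped up while the $(v_1,v_2)$-component drives this growth through the nonlinear (advection) and pressure terms. The key dichotomy built into the definition of a structure is that on $R(\{\phi<1\})$ one has $v=0$ and $Lf\ge 0$, so by \eqref{prop_of_structure_1}--\eqref{prop_of_structure_2} the NSI \eqref{NSI_rewritten} is satisfied there for free (for every $\nu\ge 0$, since $u\cdot\Delta u\ge 0$ and $u\cdot\nabla(|u|^2/2+p)=0$, and $\p_t|u|^2/2-\tfrac\nu2\Delta|u|^2$ can be absorbed once we know $f$ is nonincreasing in $t$ there). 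Thus the entire burden is concentrated on the ``core'' $\{\phi=1\}$, where we have full freedom to choose $v_t$ and $f_t$ subject only to the divergence-free constraint $\mathrm{div}(x_2 v_t)=0$ and $f_t>|v_t|$.

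On the core I would first rewrite the NSI \eqref{NSI_rewritten}, using Lemma \ref{properties_of_u[v,f]} and the cylindrical representation \eqref{def_of_us}, as a pointwise inequality on $U$ for the scalar unknowns $(v_t,f_t)$ and the (nonlocal) pressure $p[v_t,f_t]$. The idea is to pick a fixed divergence-free profile $v$ (from the recipe, e.g.\ a rescaled $(x_2,x_1)\chi_{1<|x|<2}/x_2$) and then look for $f_t$ of the form $f_t = f + t\, g + O(t^2)$, i.e.\ work to first order in a short time $\mathcal{T}$; the NSI then becomes a condition of the type $g \le (\text{terms built from } v, f, p[v,f])$ on $\mathrm{supp}\,v$, together with $f_t$ nonincreasing off $\mathrm{supp}\,v$. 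Choosing $g$ large and positive on a subregion where the right-hand side is positive — which is where the geometric arrangement of $v$ versus the pressure interaction function must cooperate — gives the gain \eqref{u_grows}: after time $\mathcal{T}$ the axial component $\sqrt{f_{\mathcal T}^2-|v|^2}$ has grown by a definite factor on a smaller concentric region, which after a parabolic rescaling $x\mapsto \tau x + z$ (with $\tau\in(0,1)$ the ratio of the two regions' scales) yields $|u(\tau x+z,\mathcal T)|\ge \tau^{-1}|u(x,0)|$.

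The smallness of $\nu$ enters only through the viscous terms $\nu(u\cdot\Delta u)$ and $-\tfrac\nu2\Delta|u|^2 + \nu|\nabla u|^2 = -\nu u\cdot\Delta u + 2\nu|\nabla u|^2$; wait — more precisely the offending term in \eqref{NSI_rewritten} on the core is $+\nu|\nabla u|^2 - \tfrac\nu2\Delta|u|^2$, which is bounded uniformly in $t\in[0,\mathcal T]$ by a constant depending on $v,f$ but not on $\nu$, so it can be dominated by the strictly positive margin we created in $\p_t|u|^2$ provided $\nu\le\nu_0$ for $\nu_0$ small enough. One must also check the sign of $\p_t|u|^2$ is under control on the annular transition region $\{0<\phi<1\}\cap\mathrm{supp}\,f$: there $v_t\equiv 0$ by the support condition, $u\cdot\nabla q=0$ and $u\cdot\Delta u\ge0$, so we only need $\p_t f_t\le 0$ there, which we can arrange by taking the time-extension $f_t$ to be nonincreasing in $t$ outside $\mathrm{supp}\,v$.

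The main obstacle is the nonlocal pressure term $p[v,f] = p^*[v,f]|_{\RR^2}$ from \eqref{def_of_p*}: the sign and size of $u\cdot\nabla(|u|^2/2+p)$ on the core is governed by a convolution integral over all of $\RR^3$ of $\sum \p_i u_j\,\p_j u_i$, so to make the first-order gain argument close one must verify that $v$ and $f$ can be chosen so that this pressure contribution (together with the self-advection $u\cdot\nabla|u|^2/2$) does not overwhelm the positive $\p_t|u|^2$ we are trying to install — this is exactly Scheffer's ``pressure interaction function'' and ``geometric arrangement'' bookkeeping, and it is where the construction is genuinely delicate rather than routine. I would quote this part from \cite{scheffer_stuff} (and ultimately \cite{scheffer_a_sol}) rather than reproduce it, since the excerpt explicitly defers the full proof of Theorem \ref{thm_scheffers_structure} there; the contribution here is only to package the output in the language of structures so that the rescaling \eqref{u_grows} and the iteration leading to Theorem \ref{thm_scheffer_single_point} become transparent.
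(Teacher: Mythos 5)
The paper gives no proof of Theorem \ref{thm_scheffers_structure} beyond the citation of Sections 1.1 and 3 of \cite{scheffer_stuff}, and you likewise defer the only genuinely hard step --- making $-u\cdot\nabla(\tfrac12|u|^2+p)$ positive and large enough on the core to fund the growth \eqref{u_grows} --- to that same reference; to that extent the two are on par, and your remarks about the edge region $R(\{\phi<1\})$, the choice of $\nu_0$, and the rescaling are consistent with how structures are used elsewhere in the paper (e.g.\ in Lemma \ref{lem_1alpha}). However, the mechanism you sketch for the deferred step is not the one the construction actually uses, and the discrepancy is concrete: the paragraph immediately after the theorem states that $U=U_1\cup U_2$ for two \emph{disjoint} sets with $(v,f,\phi)=(v_1+v_2,f_1+f_2,\phi_1+\phi_2)$, and that the gain \eqref{u_grows} is produced by the interaction \emph{between} $U_1$ and $U_2$ through the nonlocal pressure \eqref{def_of_p*} --- roughly, the field supported on one component creates a favourable pressure gradient on the other. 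Your single-rectangle, first-order-in-$t$ ansatz $f_t=f+tg+O(t^2)$, with $g$ chosen large ``where the right-hand side is positive,'' gives no reason to believe such a region exists for a lone structure; it is precisely the two-component geometric arrangement that manufactures it, and this is the missing idea if your sketch is meant to be more than a pointer to \cite{scheffer_stuff}. As a pure deferral your write-up is acceptable, since a deferral is all the paper itself supplies.
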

\begin{proof}
See Section 3.3 in \cite{scheffer_stuff} (particularly Proposition 3.8 therein) for a detailed proof.
\end{proof}

In fact, the set $U$ (from the theorem above) is of the form $U=U_1\cup U_2$ for some disjoint $U_1,U_2\Subset \RR^2_+$ and $(v,f,\phi )=(v_1+v_2,f_1+f_2,\phi_1+\phi_2)$, where $(v_1,f_1,\phi_1)$, $(v_2,f_2,\phi_2)$ are some structures on $U_1$, $U_2$, respectively. The elaborate part of the proof of Theorem \ref{thm_scheffers_structure} is devoted to the careful arrangement of $U_1$, $U_2$ and a construction of the corresponding structures and $\mathcal{T}>0$ which magnifies certain interaction between $U_1$ and $U_2$ via the pressure function, and thus allows \eqref{u_grows}. We refer the reader to Sections 3.3 and 3.4 in \cite{scheffer_stuff} for the full proof of Theorem \ref{thm_scheffers_structure}. We note, however, that the part of the theorem about the NSI is not that difficult. In fact we show in Lemma \ref{lem_1} below that any structure gives rise to infinitely many classical solutions of the NSI (on arbitrarily long time intervals) with $u[v,f]$ as the initial condition.

In order to prove Theorem \ref{thm_scheffer_single_point} we will  make use of an alternative form of the local energy inequality. Namely, the local energy inequality \eqref{local_energy_inequality} is satisfied if the \emph{local energy inequality on the time interval } $[S,{S'}]$,
\eqnb\label{alternative_LEI}
\begin{split}
\int_{\RR^3} |&u(x,S')|^2 \varphi \, \d x - \int_{\RR^3} |u(x,S)|^2 \varphi \, \d x + 
2\nu \int_S^{S'} \int_{\RR^3} | \nabla u |^2 \varphi  \\
&\leq  \int_S^{S'} \int_{\RR^3} \left( |u|^2 +2p\right) u \cdot \nabla \varphi + \int_S^{S'} \int_{\RR^3}  |u|^2 \left( \p_t \varphi + \nu \Delta \varphi \right),
\end{split}
\eqne
holds for all $S,{S'}>0$ with $S<{S'}$, which is clear by taking $S,S'$ such that $\supp\, \varphi \subset \RR^3 \times (S,S')$. An advantage of this alternative form of the local energy inequality is that it demonstrates how to combine weak solutions of the Navier--Stokes inequality one after another. Namely, \eqref{alternative_LEI} shows that a necessary and sufficient condition for two vector fields $u^{(1)}\colon \RR^3 \times [t_0,t_1] \to \RR^3$, $u^{(2)}\colon \RR^3 \times [t_1,t_2] \to \RR^3$ satisfying the local energy inequality on the time intervals $[t_0,t_1]$, $[t_1,t_2]$, respectively, to combine (one after another) into a vector field satisfying the local energy inequality on the time interval $[t_0,t_2]$ is that
\eqnb\label{what_you_need_to_combine_intro}
|u^{(2)} (x,t_1) | \leq |u^{(1)} (x,t_1) | \qquad \text{ for a.e. } x\in \RR^3.
\eqne

Using the above property and Theorem \ref{thm_scheffers_structure} we can employ a simple switching procedure to obtain Scheffer's construction of the blow-up at a single point (i.e. the claim of Theorem \ref{thm_scheffer_single_point}). Namely, considering
\[
u^{(1)} (x,t) \coloneqq \tau^{-1} u (\Gamma^{-1} (x), \tau^{-2} (t-\mathcal{T})),
\]
where $\Gamma (x) \coloneqq \tau x+ z$, we see that $u^{(1)}$ satisfies the Navier--Stokes inequality \eqref{NSI} in a classical sense for all $\nu \in [0,\nu_0 ] $ and $t\in  [\mathcal{T},(1+\tau^2) \mathcal{T}]$, $\supp\, u^{(1)} (t) = \Gamma (G)$ for all $t\in [\mathcal{T},(1+\tau^2)\mathcal{T}]$ and that \eqref{u_grows} gives 
\eqnb\label{switch_into_u1}
\left| u^{(1)} (x,\mathcal{T} ) \right| \leq \left| u (x,\mathcal{T} ) \right|, \qquad  x\in \RR^3
\eqne
(and so $u$, $u^{(1)}$ can be combined ``one after another'', recall \eqref{what_you_need_to_combine_intro}). Thus, since $u^{(1)}$ is larger in magnitude than $u$ (by the factor of $\tau$) and its time of existence is $[\mathcal{T},(1+\tau^2 )\mathcal{T}]$, we see that by iterating such a switching we can obtain a vector field $\mathfrak{u}$ that grows indefinitely in magnitude, while its support shrinks to a point (and thus will satisfy all the claims of Theorem \ref{thm_scheffer_single_point}), see Fig. \ref{switching_process_figure_with_supp}. To be more precise we let $t_0 \coloneqq 0$, 
\eqnb\label{def_of_tj}
t_j \coloneqq \mathcal{T} \sum_{k=0}^{j-1} \tau^{2k}\qquad \text{ for } j\geq 1 ,
\eqne
$T_0 \coloneqq \lim_{j\to \infty } t_j = \mathcal{T}/(1-\tau^2 )$, $u^{(0)}\coloneqq u$, and
\eqnb\label{uj_rescaling_def}
u^{(j)} (x,t) \coloneqq \tau^{-j} u \left( \Gamma^{-j} (x) , \tau^{-2j} (t-t_j) \right), \qquad j\geq 1,
\eqne
see Fig. \ref{switching_process_figure_with_supp}. As in \eqref{switch_into_u1}, \eqref{u_grows} gives that
\eqnb\label{what_is_supp_uj}
\supp \, u^{(j)} (t) = \Gamma^j (G) \qquad \text{ for }t\in [t_j, t_{j+1}]
\eqne
and that the magnitude of the consecutive vector fields shrinks at every switching time, that is
\eqnb\label{decrease_at_switching_of_ujs}
\left| u^{(j)} (x,t_j ) \right| \leq \left| u^{(j-1)} (x,t_j ) \right|, \qquad  x\in \RR^3, j \geq 1,
\eqne
see Fig. \ref{switching_process_figure_with_supp}. \\
\begin{figure}[h]
\centering
 \includegraphics[width=\textwidth]{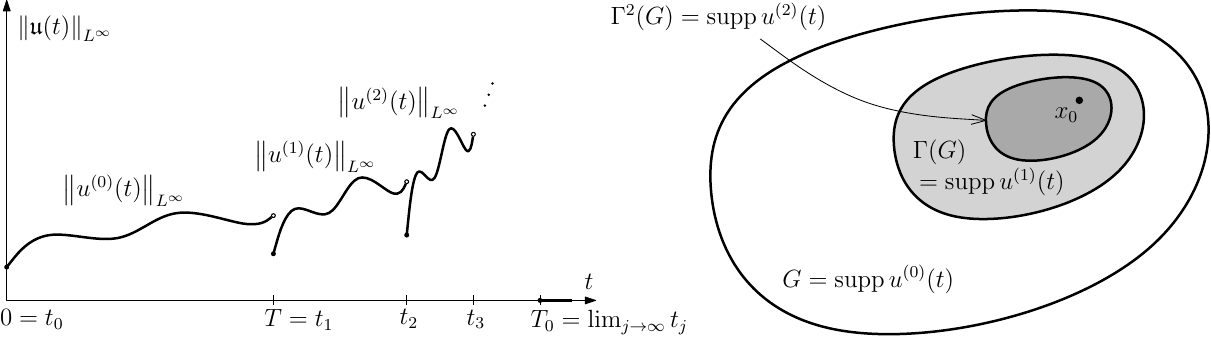}
 \nopagebreak
\captionsetup{width=0.9\textwidth}  \captionof{figure}{The switching procedure: the blow-up of $\| \mathfrak{u}(t) \|_\infty$ (left) and the shrinking support of $\mathfrak{u}(t)$ (right) as $t\to T_0^-$.}\label{switching_process_figure_with_supp} 
\end{figure}
Thus letting 
 \eqnb\label{scheffers_switching_solution}
\mathfrak{u} (t)  \coloneqq \begin{cases}
 u^{(j)}(t)  \qquad &\text{ if } t\in [t_{j},t_{j+1}) \text{ for some }j\geq 0, \\
 0 &\text{ if } t\geq  T_0,
\end{cases}
\eqne
we obtain a vector field that satisfies all claims of Theorem \ref{thm_scheffer_single_point} with $x_0\coloneqq z/(1-\tau )$. Note that $\mathfrak{u}\in L^\infty ((0,\infty ); L^2 (\RR^3 ))$ and $\nabla \mathfrak{u} \in L^2 (\RR^3 \times (0,\infty ))$ (which is required by the definition of weak solutions to the NSI, Definition \ref{def_weak_sol_of_NSI}) by construction (due to the rescaling \eqref{uj_rescaling_def} and the fact that $u^{(0)}=u$ is smooth on $R(\overline{U})\times [0,T]$).

Observe that by construction 
\eqnb\label{Lp_norms_of_scheffer_decay}
\| \mathfrak{u} (t) \|_p\to 0\qquad \text{ as }t\to T_0^- \quad \text{ for all }p\in [1,3),
\eqne
since $\tau \in (0,1)$. Indeed 
we write for any $t \in [t_j,t_{j+1}]$, $j\geq 0$,
\[
\| \mathfrak{u} (t) \|_p = \| u^{(j)} (t) \|_p \leq \sup_{s\in [t_{j},t_{j+1})} \| u^{(j)} (s) \|_p = \tau^{-j(1-3/p)} \sup_{s\in [t_0,t_1]} \| u^{(0)} (s) \|_p \to 0 \qquad \text{ as }j\to \infty.
\]

\subsection{Structures of the form $(0,f,\phi)$}
Let $U\Subset \RR^2_+$. We now focus on the structures on $U$ of the form $(0,f,\phi )$ and, for convenience, we set
\[
u[f]\coloneqq u[0,f].
\]
Roughly speaking, $u[f]$ is a swirl-only axisymmetric vector field with (pointwise) magnitude $f$. Note that for all $f_1$, $f_2$ with $f_1<f_2$
\eqnb\label{observation_smart}
\left\| u \left[ \sqrt{f_2^2-f_1^2} \right] \right\|^2 = \| u[f_2] \|^2 - \| u[f_1] \|^2,
\eqne
which is a useful property that we will use later (in \eqref{smallness_of_eta_extra} and \eqref{temp_demonstration}).
As in \eqref{prop_of_structure_2} we see that 
\eqnb\label{nonlin_term_and_p_zero}
u[f]\cdot \nabla \left( \left| u[f] \right|^2 + 2 p[f] \right) =0 \qquad \text{ in }\RR^3,
\eqne
for any $f\in C_0^\infty ( \RR_+^2 ;[0,\infty ) )$. Using this property we can show that given any structure $(v,f,\phi)$ on a set $U\Subset \RR^2_+$ there exists a time-dependent extension $f_t$ of $f$ such that $(0,f_t,\phi )$ is a structure on $U$ and gives rise to a classical solution to the NSI (for all sufficiently small viscosities) that is almost constant in time. We make this precise in the following lemma, which we will use later.
\begin{lemma}\label{lem_1alpha}
Given $\varepsilon >0$, $T>0$, $U\Subset \RR^2_+$ and a structure $(v,f,\phi )$ there exists $\nu_0 >0$ and an axisymmetric classical solution $u$ to the NSI for all $\nu\in [0,\nu_0]$, $t\in [0,T]$ that is supported in $R(\overline{U})$ with $u(0)=u[f]$ and
\eqnb\label{requirement2_lem1a}
\left\| u(t) - u[f] \right\|_q \leq \varepsilon \qquad \text{ for all } t\in [0,T], q\in [1,\infty ]. 
\eqne
\end{lemma}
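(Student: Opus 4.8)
The plan is to build $u(t)$ as a time-dependent structure of the form $(0,f_t,\phi)$, where $f_t$ interpolates between $f$ and a "spread out" version of $f$ that no longer has $v$ sitting inside it, and then verify the NSI directly using the special structure of fields $u[f_t]$. The key observation, already recorded in \eqref{nonlin_term_and_p_zero}, is that for a field of the form $u[g]$ with $g\in C_0^\infty(\RR^2_+;[0,\infty))$ the awkward transport term vanishes identically: $u[g]\cdot\nabla(|u[g]|^2+2p[g])=0$ on all of $\RR^3$. Consequently, rewriting the NSI in the form \eqref{NSI_rewritten}, what we need is
\[
\tfrac12\p_t|u|^2 - \tfrac{\nu}{2}\Delta|u|^2 + \nu|\nabla u|^2 \le 0,
\]
and since on the plane $x_3=0$ we have $|u[g]|^2=g^2$ (and, by rotational invariance, $\p_{x_3}$ of rotationally invariant quantities vanishes there, so the value on the plane determines everything by axisymmetry), this reduces to a scalar inequality for $g=f_t$ of the shape $f_t\,\p_t f_t \le \nu\,(\text{something controlled by }Lf_t\text{ and }|\nabla u[f_t]|^2)$. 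Because $f_t\,\p_t f_t$ carries a free factor we can make as small as we wish, while the viscous terms are multiplied by $\nu$, the strategy is to choose the time-derivative of $f_t$ small (of size $O(\varepsilon/T)$) and then pick $\nu_0$ small enough that the viscous terms cannot spoil the inequality except where $Lf_t>0$ already helps us.

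Concretely, first I would use the recipe of Section \ref{sec_a_recipe} together with Lemma \ref{lemma_existence_of_f_with_Lf_rectangle_prelims} to note that the "target" profile can be taken of the form $\lambda f^\sharp$ where $f^\sharp$ is the edge-effect function associated to (a rectangle containing) $U$ with $Lf^\sharp>0$ outside a $\delta$-subset, and $\lambda>0$ chosen so that $\|\lambda f^\sharp - f\|_\infty$ is already controlled; but in fact the cleanest choice is simply $f_t := f$ for all $t$, in which case \eqref{requirement2_lem1a} is trivial and the only content is that $u[f]$ itself, held constant in time, satisfies the NSI. With $\p_t u\equiv 0$ the NSI \eqref{NSI} becomes $-\nu\, u[f]\cdot\Delta u[f] + u[f]\cdot\nabla(\tfrac12|u[f]|^2+p[f])\le 0$; the second term vanishes by \eqref{nonlin_term_and_p_zero}, and by Lemma \ref{properties_of_u[v,f]}(ii) and \eqref{u_times_delta_u} the first term equals $-\nu\, f\, Lf$ on the plane. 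This is $\le 0$ precisely where $Lf\ge 0$, i.e. on $R(\{\phi<1\})\cup R(U_\eta)$ — but it is \emph{not} automatically $\le 0$ on the annular region $\{\phi=1\}\setminus U_\eta$ where $Lf$ may be negative, which is exactly the region where the original structure permits $v\ne 0$. So a genuinely constant-in-time $u[f]$ need not work, and this is where the time dependence must enter.

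Therefore the real construction is: let $f_t := f + t\,g$ (or a smoothly cut-off version thereof) for a fixed $g\in C_0^\infty(U;[0,\infty))$ supported in $\{\phi=1\}$ and chosen so that $\p_t|u[f_t]|^2 = 2f_t g$ dominates the (small, once $\nu\le\nu_0$) negative contribution $\nu f_t Lf_t$ on the bad region, while keeping $f_t>0$, $f_t>|v|$ trivially (since $v=0$ here) and $\|t g\|_\infty\le\varepsilon$ by taking $\|g\|_\infty\le\varepsilon/T$. Increasing $f_t$ in time makes $\p_t|u|^2\ge 0$, which fights the correct sign only if the viscous terms $-\tfrac{\nu}{2}\Delta|u|^2+\nu|\nabla u|^2$ are also nonnegative or small — here I would instead \emph{decrease} $f_t$ on the region where $Lf<0$: write the scalar NSI on the plane as $f_t\p_t f_t \le \nu f_t Lf_t - \nu|\nabla u[f_t]|^2$ after accounting for the $\Delta|u|^2$ term, observe that wherever $Lf_t\ge 0$ the right side is not obviously signed either, so the honest route is to keep $\p_t f_t\le 0$ everywhere (monotone decay, consistent with the paper's "nonincreasing" restriction flagged in the abstract), making the left side $\le 0$, and then choose $\nu_0$ small enough that $\nu(Lf_t\text{-term} - |\nabla u|^2\text{-term})$ is dominated in absolute value by the strictly negative $f_t\p_t f_t$ wherever $\p_t f_t<0$, and rely on $Lf_t\ge 0$ wherever $\p_t f_t=0$. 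The main obstacle is precisely the bookkeeping on the transition region $\{\phi=1\}\setminus U_\eta$: one must pick $g$ supported there, with a definite negative sign for $\p_t f_t$, large enough (but still $O(\varepsilon/T)$ in sup norm) that $f_t|\p_t f_t|$ beats $\nu_0\big(\|f\|_{C^2}\|f\|_\infty + \|\nabla u[f]\|_\infty^2\big)$ uniformly on that compact region, and then a compactness/continuity argument gives a uniform $\nu_0>0$; away from that region $Lf\ge 0$ does the job with $\p_t f_t=0$. Finally \eqref{requirement2_lem1a} for $p=\infty$ follows from $\|f_t-f\|_\infty = t\|g\|_\infty\le\varepsilon$ and the identity $|u[f_t]-u[f]| = |f_t - f|$ on the support (both fields point in $\widehat\phi$ with magnitudes $f_t,f$), and for $p<\infty$ from the fixed compact support $R(\overline U)$.
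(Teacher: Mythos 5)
Your overall strategy is the right one and is essentially the paper's: take $u(t)=u[f_t]$ with $f_t$ a monotonically decreasing perturbation of $f$, use \eqref{nonlin_term_and_p_zero} to kill the transport term, rely on $Lf\geq 0$ where $f_t$ is frozen, and choose $\nu_0$ small so that the viscous term cannot overcome the strictly negative $\p_t|u|^2$ elsewhere. However, you have misidentified the region where the decrease must be active, and this is a genuine gap. You assert that $Lf\geq 0$ on $R(\{\phi<1\})\cup R(U_\eta)$ and therefore support the decay only on the ``annulus'' $\{\phi=1\}\setminus U_\eta$, keeping $\p_t f_t=0$ on $U_\eta$. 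But the definition of a structure guarantees $Lf>0$ only on $U\setminus\{\phi=1\}$, and in the deep interior $L$ is typically \emph{negative}: where $f\equiv 1$ (as in the recipe, on $U_\eta$) one has $Lf=-1/x_2^2<0$ — this is exactly why Lemma \ref{lemma_existence_of_f_with_Lf_rectangle_prelims} is an ``edge effects'' lemma and says nothing about the $\delta$-subset. With your choice, at a point of $U_\eta$ the NSI reads $0=\p_t|u|^2\leq 2\nu\,u\cdot\Delta u=-2\nu/x_2^2<0$ for every $\nu>0$, so no positive $\nu_0$ works. The decrease must be strict on \emph{all} of $\{\phi=1\}$; the paper achieves this with $f_t^2:=f^2-\delta t\phi$, so that $\p_t|u|^2=-\delta$ uniformly on $\{\phi=1\}$ (and working with $f^2$ rather than $f$ also makes $\p_t|u|^2$ exactly computable).

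A secondary error: your ``scalar NSI on the plane'' $f_t\p_t f_t\leq \nu f_tLf_t-\nu|\nabla u[f_t]|^2$ double-counts the gradient term. Since $\tfrac{\nu}{2}\Delta|u|^2-\nu|\nabla u|^2=\nu\,u\cdot\Delta u$ and, by \eqref{u_times_delta_u}, $u[f_t]\cdot\Delta u[f_t]=f_tLf_t$ on the plane, the correct inequality is simply $f_t\p_t f_t\leq\nu f_tLf_t$. With your version even the good region $\{Lf_t\geq 0,\ \p_t f_t=0\}$ would not close, because $f_tLf_t\geq|\nabla u|^2$ is not implied by $Lf_t\geq 0$. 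Once both points are corrected — decrease proportional to $\phi$ everywhere on $\supp\phi$, and the clean identity $u\cdot\Delta u=fLf$ — your argument becomes the paper's: choose $\delta=O(\varepsilon)$ for \eqref{requirement2_lem1a}, then $\nu_0$ so small that $2\nu_0\|u[f_t]\cdot\Delta u[f_t]\|_\infty\leq\delta$ on $[0,T]$.
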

\begin{proof}
Let 
\[
u(t) \coloneqq u[f_t],
\]
where
\[
f_t^2 \coloneqq f^2 - \delta t \phi
\]
and $\delta >0$ is sufficiently small such that $f_t>0$ in $U$ for all $t\in [0,T]$ (Note this is possible since $f$ is continuous and $\supp\,\phi\Subset \supp f$). Clearly $u(0)=u[f]$ and \eqref{requirement2_lem1a} follows for $q\in \{1,\infty \}$ by taking $\delta $ sufficiently small. If $q\in (1,\infty )$ then \eqref{requirement2_lem1a} follows using Lebesgue interpolation. 

It remains to verify that $u(t)$ satisfies the NSI. To this end let $\nu_0>0$ be sufficiently small such that
\eqnb\label{how_small_is_nu0_lem1}
\nu_0 \left| u[f_t](x) \cdot \Delta u[f_t ](x) \right| \leq  \frac{\delta }{2  } \quad \text{ for } x\in \RR^3, t\in [0,T].
\eqne
Due to the axisymmetry of $u$ it is enough to verify the NSI only for points of the form $(x,0,t)$, for $x\in \overline{U}$, $t\in [0,T]$. Setting $p$ to be the pressure function corresponding to $u$ (that is $p(t)\coloneqq p^*[0,f_t]$) we use \eqref{nonlin_term_and_p_zero} to write
\eqnb\label{temp1_demonstration}\begin{split}
\p_t |u(x,0,t)|^2 &= -\delta \phi (x) \\
&= -\delta \phi (x)  - u(x,0,t)\cdot \nabla \left(|u(x,0,t)|^2+2p(x,0,t) \right)\\
&\leq 2\nu u(x,0,t) \cdot \Delta u(x,0,t) -u(x,0,t)\cdot \nabla \left(|u(x,0,t)|^2+2p(x,0,t) \right),
\end{split}
\eqne
as required, where, in the last step, we used \eqref{prop_of_structure_1} for $x$ such that $\phi(x)<1$ and \eqref{how_small_is_nu0_lem1} for $x$ such that $\phi(x)=1$.
\end{proof}
Observe that the lemma does not make any use of $v$. One similarly obtains the same result, but with the claim on the initial condition $u(0)=u[f]$ replaced by a condition at a final time, namely by the pointwise inequality $|u(T)|\geq |u[f]|$ everywhere in $\RR^3$. We thus obtain the following lemma, which we will use to prove Theorem \ref{thm_main2}.
\begin{lemma}\label{lem_1}
Given $\varepsilon >0$, $T>0$, $U\Subset P$ and a structure $(v,f,\phi )$ there exists $\nu_0 >0$ and an axisymmetric classical solution $u$ to the NSI for all $\nu\in [0,\nu_0]$ that is supported in $R(\overline{U})$,
\eqnb\label{requirement1_lem1}
 \left| u(x,T) \right| \geq \left| u[f](x)  \right| \quad \text{ for all }x\in \RR^3
\eqne
 and
\eqnb\label{requirement2_lem1}
\left\| u(t) - u[f] \right\|_p \leq \varepsilon \qquad \text{ for all } t\in [0,T], p\in [1,\infty ]. 
\eqne
\end{lemma}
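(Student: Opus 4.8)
The plan is to follow the proof of Lemma~\ref{lem_1alpha} almost verbatim, reversing the direction in which $f$ is deformed in time. Concretely, I would set
\[
u(t)\coloneqq u[f_t],\qquad\text{where}\qquad f_t^2\coloneqq f^2+\delta(T-t)\phi ,
\]
with $\delta>0$ small, to be fixed below. Two features of this choice carry the argument. First, $\p_t|u[f_t](x,0,t)|^2=\p_t f_t^2(x)=-\delta\phi(x)\le0$, which is precisely the favourable sign exploited in Lemma~\ref{lem_1alpha}. Second, $f_T=f$, hence $u(T)=u[f]$; in particular \eqref{requirement1_lem1} holds (in fact with equality). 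The support statement is immediate since $\supp u[f_t]=R(\supp f_t)=R(\overline{U})$ for every $t$, and \eqref{requirement2_lem1} follows for $p\in\{1,\infty\}$ by taking $\delta$ small (all the supports lie in the fixed finite-measure set $R(\overline{U})$ and $f_t\to f$ uniformly as $\delta\to0$) and then for $p\in(1,\infty)$ by Lebesgue interpolation.

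It remains to verify that $u$ solves the NSI in the classical (hence weak) sense for all small $\nu$. As in Lemma~\ref{lem_1alpha}, one first checks that $(0,f_t,\phi)$ is a structure on $U$ for every $t\in[0,T]$ once $\delta$ is small enough: $f_t>0$ in $U$ and $\supp f_t=\overline{U}$ are immediate from $f>0$ in $U$ and $\supp\phi\Subset\supp f=\overline{U}$, while $Lf_t>0$ in $U\setminus\{\phi=1\}$ follows from continuity of $Lf_t$ in $\delta$, using that $f_t\equiv f$ off the compact set $\supp\phi$ and that $f$ is bounded away from $0$ on $\supp\phi\Subset U$. Granting this, \eqref{nonlin_term_and_p_zero} gives $u[f_t]\cdot\nabla(|u[f_t]|^2+2p)=0$, where $p$ is the pressure corresponding to $u$, and \eqref{prop_of_structure_1} gives $u[f_t]\cdot\Delta u[f_t]\ge0$ on $R(\{\phi<1\})$. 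One then picks $\nu_0>0$ so small that $\nu_0|u[f_t](x)\cdot\Delta u[f_t](x)|\le\delta/2$ for all $x\in\RR^3$, $t\in[0,T]$ (possible because $u[f_t]$ is smooth, compactly supported and continuous in $t$; cf.\ \eqref{how_small_is_nu0_lem1}), reduces by axisymmetry to points of the form $(x,0,t)$ with $x\in\overline{U}$, and writes
\[
\p_t|u|^2=-\delta\phi=-\delta\phi-u\cdot\nabla(|u|^2+2p)\le 2\nu\,u\cdot\Delta u-u\cdot\nabla(|u|^2+2p),
\]
using \eqref{prop_of_structure_1} at points with $\phi<1$ and the smallness of $\nu_0$ at points with $\phi=1$. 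This is exactly the NSI written out for axisymmetric fields (recall \eqref{NSI_rewritten}).

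I do not anticipate a genuine difficulty: as remarked just before the statement, this is the ``time-reversed'' companion of Lemma~\ref{lem_1alpha}, whose proof never uses $v$. The one step that needs a little care is confirming that the perturbed triple $(0,f_t,\phi)$ is still a structure --- i.e.\ that the edge-effect positivity $Lf_t>0$ on $U\setminus\{\phi=1\}$ survives the perturbation for $\delta$ small --- which is the same routine continuity argument already implicit in the proof of Lemma~\ref{lem_1alpha}.
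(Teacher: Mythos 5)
Your proposal is correct and takes essentially the same route as the paper: the paper proves Lemma \ref{lem_1} by rerunning the proof of Lemma \ref{lem_1alpha} with $f$ replaced by $(1+\epsilon)f$ and $\delta$ taken smaller (so that $f_T\geq f$ pointwise), whereas you anchor the same one-parameter deformation at the final time via $f_t^2=f^2+\delta(T-t)\phi$ so that $u(T)=u[f]$ exactly; the two choices are interchangeable. The working mechanism --- the sign $\partial_t|u|^2=-\delta\phi\le 0$, the vanishing of $u\cdot\nabla(|u|^2+2p)$ from \eqref{nonlin_term_and_p_zero}, \eqref{prop_of_structure_1} where $\phi<1$ and the smallness of $\nu_0$ where $\phi=1$ --- is identical, and your explicit check that $(0,f_t,\phi)$ remains a structure is a step the paper leaves implicit.
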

\begin{proof}
The lemma follows in the same way as Lemma \ref{lem_1alpha} after replacing ``$f$'' in the above proof by ``$(1+\epsilon )f$'' for sufficiently small $\epsilon>0$ and then taking $\delta >0$ (and so also $\nu_0$) smaller.
\end{proof}
Finally, observe that if $f_{1,t},f_{2,t} \in C_0^\infty (\RR^2_+ ; [0,\infty ))$ are disjointly supported (for each $t$) then
\[p^*[0,f_{1,t}+f_{2,t}]=p^*[0,f_{1,t}]+p^*[0,f_{2,t}]\]
and so
\eqnb\label{the_substitute_for_linearity}
u[f_{1,t}+f_{2,t}] \text{ satisfies the NSI in the classical sense}
\eqne
whenever each of $u[f_{1,t}]$ and $u[f_{2,t}]$ does. Indeed, this is because the term 
\eqnb\label{tempusiek}u (x_1,x_2,0)\cdot \nabla p (x_1,x_2,0) = u_3 (x_1,x_2,0) \p_3 p (x_1,x_2,0)\eqne
in the NSI vanishes (due to \eqref{dx3_of_p_is_zero}). Note that \eqref{the_substitute_for_linearity} does not necessarily hold for structures $(v,f,\phi)$ with $v\ne 0$, as in this case the term $u\cdot \nabla p$ does not simplify as in \eqref{tempusiek}. We will use \eqref{the_substitute_for_linearity} as a substitute for linearity of the NSI in the proof of Theorem \ref{thm_main2} in Section \ref{sec_pf_of_energy+blowup}. 

\section{Proof of Theorem \ref{thm_main}}\label{sec_pf_of_thm_main}

In this section we prove Theorem \ref{thm_main}; namely given an open set $W\subset \RR^3$, $\varepsilon>0$, $T>0$ and a continuous, nonincreasing function $e\colon [0,T] \to [0,\infty )$ there exist $\nu_0>0$ and a weak solution $u$ of the NSI for all $\nu \in [0,\nu_0 ]$ such that $\supp\,u(t) \subset W$ for all $t\in [0,T]$ and 
\eqnb\label{1.10_new}
\left| \| u(t) \| - e(t) \right| \leq \varepsilon \qquad \text{ for all } t\in [0,T].
\eqne
(Recall that $\| \cdot \|$ denotes the $L^2(\RR^3 )$ norm.)\\ We will assume that $e(t)$ is continuous. If $e(t)$ is discontinuous then one can easily incorporate the times at which $e(t)$ has jumps into the switching procedure. This will become clear from the proof, and we give a more detailed explanation in Section \ref{sec_pf_of_case_discontinuous} below.

We can assume that $e(T)=0$, as otherwise one could extend $e$ continuously beyond $T$ into a function decaying to $0$ in finite time $T'>T$.
Moreover, by translation in space we can assume that $W$ intersects the $x_1$ axis. Let $U\Subset \RR^2_+$ be such that $R(\overline{U})\subset W$. We will construct an axisymmetric weak solution to the NSI (for all sufficiently small viscosities) such that $u(t) \in C_0^\infty (\RR^3)$, $\supp \, u(t) \subset R(\overline{U})$ and
\[
\left| \| u (t) \| - e(t) \right| \leq \varepsilon 
\]
for all $t\in [0,T]$.\\

Before the proof we comment on its strategy in an informal manner. Suppose for the moment that we would like to use a similar approach as in the proof of Lemma \ref{lem_1alpha}, that is define some rectangle $U\Subset \RR^2_+$, a structure $(v,f,\phi)$ on it and $u(t)\coloneqq u[f_t]$, where 
\eqnb\label{a_rhs_ft}
f_t^2 \coloneqq f^2 - (C-D e(t)^2) \phi,
\eqne
for some constants $C, D>0$, such that  
\[
\| u (t) \| \approx e(t)
\]
at least for small $t$. In fact we could use the recipe from Section \ref{sec_a_recipe} to construct $(v,f,\phi)$. In order to proceed with the calculation (that is to guarantee the NSI) we would need to guarantee that $(e(t)^2)'$ is bounded above by some negative constant (so that the term with the Laplacian could be absorbed for $x$ such that $\phi(x)=1$; recall the last step of \eqref{temp1_demonstration}), which is not a problem, as the following lemma demonstrates.
\begin{lemma}\label{lem_modification_of_e}
Given $\varepsilon>0$ and a continuous and nonincreasing function $e\colon [0,T]\to [0,\infty )$ there exist $\zeta>0$ and $\widetilde{e}\colon [0,T]\to  [0,\infty )$ such that $\widetilde{e}\in C^\infty ([0,T])$, and 
\[ e(t) \leq \widetilde{e}(t) \leq e(t) + \varepsilon, \quad \frac{\d }{\d t}\widetilde{e}(t)^2 \leq -\zeta \qquad \text{ for } t\in [0,T].
\]
\end{lemma}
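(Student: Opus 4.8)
The plan is to reduce the statement to a cleaner one about the \emph{square} of $e$. Write $h\coloneqq e^2$, which is continuous, nonnegative and nonincreasing on $[0,T]$. It suffices to produce $H\in C^\infty([0,T])$ and $\zeta>0$ with
\[
h(t)\leq H(t)\leq h(t)+\varepsilon^2\qquad\text{and}\qquad H'(t)\leq -\zeta\qquad\text{for all }t\in[0,T],
\]
and moreover $\inf_{[0,T]}H>0$; then I would simply set $\widetilde e\coloneqq\sqrt H$. Indeed $\widetilde e\in C^\infty([0,T])$ because $\sqrt{\,\cdot\,}$ is smooth on $(0,\infty)$ and $H>0$; the lower bound $\widetilde e\geq\sqrt h=e$ holds since $e\geq0$; the upper bound follows from subadditivity of the square root, $\widetilde e=\sqrt{h+(H-h)}\leq\sqrt h+\sqrt{\varepsilon^2}=e+\varepsilon$; and $\tfrac{\d}{\d t}\widetilde e^{\,2}=H'\leq-\zeta$. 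So the whole problem becomes the construction of such an $H$.

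For that construction I would first extend $h$ off $[0,T]$ by the constants $h(0)$ (for $t<0$) and $h(T)$ (for $t>T$); the extension $\widehat h$ is continuous and nonincreasing on all of $\RR$. Next I mollify, $h_\sigma\coloneqq J_\sigma\ast\widehat h$ with a standard nonnegative kernel $J_\sigma$ of unit mass supported in $[-\sigma,\sigma]$. The key point is that convolving a nonincreasing function with a nonnegative kernel of unit mass keeps every increment nonpositive, so $h_\sigma$ is again nonincreasing, and it is now smooth; moreover $h_\sigma\to\widehat h$ uniformly on $[0,T]$ as $\sigma\to0$ by uniform continuity of $\widehat h$ near $[0,T]$. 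I then choose $\sigma$ small enough that $\|h_\sigma-h\|_{L^\infty([0,T])}$ is a small fraction of $\varepsilon^2$ and set
\[
H(t)\coloneqq h_\sigma(t)+c(T-t)+\kappa,\qquad \zeta\coloneqq c,
\]
with $c\asymp\varepsilon^2/T$ and $\kappa$ a positive constant comparable to $\varepsilon^2$ and strictly larger than $\|h_\sigma-h\|_{L^\infty([0,T])}$ (say $c=\varepsilon^2/(2T)$, $\kappa=\varepsilon^2/4$, $\|h_\sigma-h\|_{L^\infty([0,T])}\leq\varepsilon^2/8$). A short bookkeeping of constants then gives $H-h=(h_\sigma-h)+c(T-t)+\kappa\in[\kappa-\|h_\sigma-h\|_{L^\infty},\ \varepsilon^2]\subset(0,\varepsilon^2]$ on $[0,T]$, so in particular $\inf_{[0,T]}H>0$, while $H'=h_\sigma'-c\leq -c=-\zeta$ since $h_\sigma'\leq0$. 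This produces the required $H$.

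The step I expect to be the genuine obstacle, and the one where the hypothesis that $e$ is nonincreasing is used, is obtaining the \emph{strict} derivative bound $H'\leq-\zeta$ simultaneously with the uniform closeness $|H-h|\leq\varepsilon^2$. A naive approach — take any smooth sup-norm approximant $P$ of $e$ (or of $e^2$) and subtract a small linear term — fails, because a merely continuous function can be approximated uniformly by smooth functions whose derivatives are arbitrarily large, so the correction $-\zeta$ would be swamped. Monotonicity resolves this: it survives mollification, after which an arbitrarily small linear drift $c(T-t)$ already forces $H'\leq-c$; and one only needs $\zeta\lesssim\varepsilon^2/T$, which is consistent with $H$ remaining within $\varepsilon^2$ of a possibly-constant profile $h$ over a time interval of length $T$. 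The only remaining technical nuisance is keeping $H$ strictly positive so that $\sqrt H$ is smooth up to $t=T$ (relevant when $e(T)=0$), which is why $\kappa$ is chosen strictly above the mollification error rather than merely nonnegative.
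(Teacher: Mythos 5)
Your proposal is correct and takes essentially the same route as the paper: both proofs extend $e$ by constants, mollify $e^2$ (using that monotonicity survives convolution with a nonnegative kernel), add a small strictly decreasing affine term inside the square root, and take $\widetilde{e}$ to be the square root of the result. Your bookkeeping with increments of size $\varepsilon^2$ rather than $\varepsilon$ inside the square root is in fact the correct scaling for the upper bound $\widetilde{e}\leq e+\varepsilon$, and your explicit positivity margin $\kappa$ addresses the smoothness of $\sqrt{H}$ at points where $e$ vanishes.
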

\begin{proof}
Extend $e(t)$ by $e(T)$ for $t>T$ and by $e(0)$ for $t<0$. Let $J_\delta {e^2}$ denote a mollification of $e^2$. Since $e^2$ is uniformly continuous $J_\delta {e^2}$ converges to $e^2$ in the supremum norm as $\delta \to 0$, and so $\| J_\delta e^2 - e^2 \|_{L^\infty (\RR)}<\varepsilon/4$ for sufficiently small $\delta$. Then the function 
\[
\widetilde{e}(t) \coloneqq \sqrt{ J_\delta e^2 (t) + \left( \varepsilon/2-\varepsilon t/4T \right)}
\] 
satisfies the claim of the lemma with $\zeta \coloneqq \varepsilon/4T$.
\end{proof}

The problem with \eqref{a_rhs_ft} is that its right-hand side can become negative for small times\footnote{Note that the point $x\in U$ at which the right-hand side of \eqref{a_rhs_ft} will become negative is located close to the $\partial U$ since only for such $x$ $\phi (x) =1$ but $f(x) <\max \,f$.} (so that $(0,f_t,\phi)$ would no longer be a structure, and so $u[f_t]$ would not be well-defined). We will overcome this problem by utilising the property \eqref{what_you_need_to_combine_intro}. 
Namely, at time $t_1$ when the right-hand side of \eqref{a_rhs_ft} becomes zero we will ``trim'' $U$ to obtain a smaller set $U^1$, on which the right-hand side of \eqref{a_rhs_ft} does not vanish, and we will define a new structure $(0,f_1,\phi_1 )$, with $f_1^2\leq f^2+(C-De(t_1)^2)\phi$. We will then continue the same way (as in \eqref{a_rhs_ft}) to define $u(t)\coloneqq u[f_{1,t}]$ for $t\geq t_1$ where
\[
f_{1,t}^2 \coloneqq f^2 -(C_1-D_1 e(t)^2) \phi_1
\] 
for appropriately chosen $C_1,D_1$. Note that such a continuation satisfies the local energy inequality, since  \eqref{what_you_need_to_combine_intro} is satisfied. We will then continue in the same way to define $U^2,U^3, \ldots $, structures $(0,f_2,\phi_2), (0,f_3,\phi_3), \ldots $, and $u(t)\coloneqq u[f_{k,t}]$ for $t\in [t_k,t_{k+1}]$, where
\eqnb\label{kth_f_t}
f_{k,t}^2 \coloneqq f_k^2 -(C_k - D_k e(t)^2 ) \phi_k,
\eqne
and $C_k,D_K>0$ are chosen appropriately, until we reach time $t=T$. 

Such a procedure might look innocent, but note that there is a potentially fatal flaw. Namely, we need to use an existence result such as Lemma \ref{lemma_existence_of_f_with_Lf_rectangle_prelims} in order to construct $f_k$ as well as $\delta_k>0$; recall that $\delta_k$ controls the edge effect (that is $Lf_k\geq 0$ in $U^k\setminus U^k_{\delta_k}$) and that, according to the recipe from Section \ref{sec_a_recipe}, $\phi_k$ is chosen so that $\phi_k =1$ on $U^k_{\delta_k}$. However, Lemma \ref{lemma_existence_of_f_with_Lf_rectangle_prelims} gives no control of $\delta_k $, and so it seems possible that $\delta_k$ shrinks rapidly as $k$ increases, and consequently
\[
\inf_{U^k_{\delta_k}} f_k \to 0 \text{ rapidly }\qquad \text{ as }k\text{ increases.}
\]
Thus (since $\phi_k =1$ on $U^k_{\delta_k}$) the length of the time interval $[t_{k},t_{k+1}]$ would shrink rapidly to $0$ as $k $ increases (as the right-hand side of \eqref{kth_f_t} would become negative for some $x$), and so it is not clear whether the union of all intervals,
\[
\bigcup_{k\geq 0} [t_k,t_{k+1}],
\]
would cover $[0,T]$.

In order to overcome this problem we prove a sharper version of Lemma \ref{lemma_existence_of_f_with_Lf_rectangle_prelims} which states that we can choose $\delta = c'\eta$ and $f$ such that $f>c$ in $U_\delta$, where the constants $c,c'\in (0,1)$ do not depend on the size of $U$.
\begin{lemma}[The cut-off function with the edge effect on a rectangle]\label{lemma_existence_of_f_with_Lf_rectangle}
Let $a>0$ and $U\Subset \RR^2_+$ be an open rectangle that is located at least $a$ away from the $x_1$ axis, that is $U=(a_1,b_1 ) \times (a_2,b_2)$ for some $a_1,a_2,b_1,b_2 \in \RR$ with $b_1>a_1$, $b_2>a_2 >a$. Given $\eta \in (0,\min \{1, (b_1-a_1)/2,(b_2-a_2)/2 \})$ there exists $f\in C_0^\infty (\RR^2_+ ; [0,1])$ such that 
\[
\supp \, f = \overline{U},\quad f>0 \text{ in } U \text{ with } f=1 \text{ on } U_\eta,
\]
\[
Lf >0 \quad \text{ in } U \setminus U_{c'\eta}, \text{ with } f>c \text{ in } U_{c'\eta/2},
\]
where $c,c'\in (0,1/2)$ depend only on $a$.
\end{lemma}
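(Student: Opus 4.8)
The plan is to prove the quantitative refinement of Lemma \ref{lemma_existence_of_f_with_Lf_rectangle_prelims} by an explicit construction, reducing everything to a careful one-dimensional profile near each of the four edges of the rectangle, together with a scaling argument that tracks how the constants depend on the geometry. First I would observe that, since $L f = \Delta f + x_2^{-1}\partial_{x_2} f - x_2^{-2} f$, the only genuinely dangerous term is the $-x_2^{-2} f$ contribution, which is controlled uniformly once $U$ is at least $a$ away from the $x_1$-axis: on $U$ we have $x_2^{-2} \le a^{-2}$ and $|x_2^{-1}\partial_{x_2} f| \le a^{-1}|\partial_{x_2} f|$, so it suffices to arrange that $\Delta f$ is large and positive (of size at least, say, $2a^{-2}$ times $\sup f$) in the boundary layer $U \setminus U_{c'\eta}$, while $f$ itself stays bounded below by a fixed constant $c$ on $U_{c'\eta/2}$. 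This is where the edge layers do the work: near each edge one wants $f$ to look like a rapidly-rising convex profile in the inward normal direction.

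Next I would give the construction near a single edge, say $\{x_2 = a_2\}$, by setting $f$ equal to a fixed template profile $g(\,\cdot\,)$ depending only on the scaled distance to that edge. Concretely, take a smooth $g\colon[0,\infty)\to[0,1]$ with $g(0)=0$, $g\equiv 1$ on $[1,\infty)$, $g>0$ on $(0,\infty)$, $g''>0$ on some initial interval $(0,s_0)$ with $g''\ge \beta>0$ there, and $g \ge c$ on $[s_0/2,\infty)$; then let $f(x_1,x_2) \approx g\big((x_2-a_2)/\eta\big)$ near that edge (and symmetrically near the other three edges, multiplying the four one-dimensional profiles together in the interior overlap, which only helps convexity since each factor is in $[0,1]$). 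Differentiating, $\partial_{x_2}^2 f \approx \eta^{-2} g''$, so in the layer where $g''\ge\beta$ — i.e. at distance $\le s_0\eta$ from the edge — we get $\Delta f \gtrsim \beta\eta^{-2}$, which dominates the $a^{-2}\sup f + a^{-1}|\partial_{x_2}f| \lesssim a^{-2} + a^{-1}\eta^{-1}$ correction as soon as $\eta$ is small relative to $a$ (and for $\eta$ not small we can always shrink it, since a valid $f$ for a smaller $\eta$ also works, after noting $U_\eta \subset U_{\eta'}$ for $\eta' < \eta$). This forces $Lf>0$ throughout the union of the four layers, which contains $U\setminus U_{c'\eta}$ for $c' := s_0$ (or any $c' \le s_0$); and since $g \ge c$ on $[s_0/2,\infty)$, we get $f \ge c$ on $U_{c'\eta/2}$. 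Crucially, $c,c',s_0,\beta$ are properties of the fixed template $g$ and of $a$ alone, with no dependence on $b_1-a_1$, $b_2-a_2$ — exactly the uniformity we need for the iteration in the proof of Theorem \ref{thm_main}.

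I expect the main obstacle to be the bookkeeping at the corners, where two (or in the overlap of all cutoffs, potentially up to four) edge profiles are multiplied: one must check that the product profile still has the claimed lower bound on $U_{c'\eta/2}$ and, more delicately, that no spurious negative contribution to $Lf$ is created in the corner regions of the layer. The cleanest way around this is to keep the four edge-layers genuinely disjoint by taking $c'$ small enough (at most $1/4$, say, so that the inward layers from opposite edges cannot meet given $b_i - a_i$ can only be bounded below trivially — actually this needs the rectangle not too thin, so alternatively one works edge-by-edge and simply notes that in any overlap each factor lies in $[0,1]$ with nonnegative second derivative in its own normal direction, so the product's Laplacian picks up $\sum_i (\partial_{n_i}^2 f_i)\prod_{j\ne i} f_j \ge 0$ plus the same uniformly-bounded lower-order terms, with the mixed first-order cross terms $\partial_{n_1}f_1\,\partial_{n_2}f_2$ being nonnegative as well since each $f_i$ is increasing inward). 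Once the sign of every term in the corner is accounted for, the estimate closes verbatim as in the single-edge case. The rest — smoothness, $\supp f = \overline U$, $f=1$ on $U_\eta$, $f \in [0,1]$ — is immediate from the corresponding properties of the template $g$.
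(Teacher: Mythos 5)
Your outline follows the same route as the paper (a fixed one-dimensional template rescaled to the length scale $\eta$ near each edge, a product over the edges, and constants inherited from the template and from $a$ only), but the quantitative core of your argument does not close. The template $g$ must vanish to \emph{infinite} order at $0$: the lemma demands $f\in C_0^\infty(\RR^2_+)$ with $\supp f=\overline U$, so $f$ and all its derivatives vanish on $\partial U$, hence $g''(s)\to 0$ as $s\to 0^+$ and there is \emph{no} interval $(0,s_0)$ on which $g''\ge\beta>0$. Consequently the step ``$\Delta f\gtrsim\beta\eta^{-2}$ dominates $a^{-2}\sup f+a^{-1}\eta^{-1}\sup|g'|$'' is unavailable: at points very close to $\partial U$ both $\Delta f$ and the negative terms tend to $0$, and what decides the sign of $Lf$ is the \emph{ratio} of $g$, $g'$ to $g''$, not absolute lower bounds. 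The paper's fix is a generalized Mean Value Theorem (divided differences plus Rolle): choosing $g$ with $g'''>0$ on an initial interval, one gets $0<g'(s)<s\,g''(s)$ and $g(s)<s^2 g''(s)$ there, so the negative contributions $a^{-1}\eta^{-1}g'+a^{-2}g$ are bounded by $\bigl(s\eta/a+(s\eta/a)^2\bigr)\eta^{-2}g''$ and are beaten by $\eta^{-2}g''$ once $s\le c'$ with $c'$ small depending only on $a$. Without some version of this pointwise comparison your estimate fails in the entire boundary layer.

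The same gap resurfaces in your corner/overlap discussion. It is not true that each edge factor has ``nonnegative second derivative in its own normal direction'': the transition profile $h_\eta(x_1-a_1)$ is necessarily concave on part of the interval where it climbs to $1$, so in the strip where $x_1$ sits in that concave zone (but still within $U_{\eta''}$ in the $x_1$-direction) and $x_2$ is within $O(\eta)$ of $a_2$, the term $f_1''f_2$ is \emph{negative} of size up to $C_h\eta^{-2}f_2(x_2)$. The paper controls it by pairing it against $f_1 f_2''\ge m\,\eta^{-2}g''(s_2)$ and again invoking $f_2<(x_2-a_2)^2f_2''$, which yields positivity precisely when $(x_2-a_2)\le c'\eta$ with $c'\lesssim\sqrt{m/C_h}$ (this is where the second minimum in the paper's choice of $c'$ comes from); your sign bookkeeping $\sum_i(\partial_{n_i}^2 f_i)\prod_{j\ne i}f_j\ge 0$ is simply false in these strips. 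Your peripheral remarks are fine (the reduction to $\eta$ small relative to $a$ at the cost of an $a$-dependent factor in $c'$ is legitimate, and the product structure kills the mixed second-order cross terms in $\Delta$), but the proof needs the divided-difference comparison to be inserted before either the boundary layer or the strips can be handled.
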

\begin{proof} We prove the lemma in Appendix \ref{sec_proof_of_lemma_edge}.
\end{proof}
The above lemma allows us to ensure that the time interval $[0,T]$ can be covered by only finitely many intervals $[t_k,t_{k+1}]$.

We now make the above strategy rigorous. 

\begin{proof}[\nopunct Proof of Theorem \ref{thm_main}] (Recall that we also assume that $e(T)=0$ and that $U\subset \RR^2_+$ is such that $R(\overline{U})\subset W$.)
Fix $a>0$ such that $\mathrm{dist}(U,x_1\text{-axis})\geq a$.
By applying Lemma \ref{lem_modification_of_e} we can assume that $e^2$ is differentiable on $[0,T]$ with $(e^2 (t))' \leq -\zeta$ for all $t\in [0,T]$, where $\zeta >0$. 
Let $K$ be the smallest positive integer such that
\[
(1-c^2)^K e(0)^2 <\varepsilon^2  ,
\]
where $c=c(a)\in (0,1/2)$ is the constant from Lemma \ref{lemma_existence_of_f_with_Lf_rectangle}. 
For $k\in \{ 1,\ldots , K\}$ let $t_k\in [0,T]$ be such that
\eqnb\label{def_of_tk}
e(t_k)^2 = (1-c^2)^k e(0)^2.
\eqne
(Note $t_k$ is uniquely determined since $e(t)^2$ is strictly decreasing, $(e(t)^2)'\leq -\zeta$.) Let also $t_0 \coloneqq 0$. Observe that the choice of $K$ implies that
\eqnb\label{e(t)_is_at_least_eps2/2}
e(t)^2 \geq \varepsilon^2/2 \quad \text{ for }t\in [t_0,t_K].
\eqne
Indeed, since $e(t)$ is deceasing and $c^2<1/2$,
\[
e(t)^2 \geq e(t_K )^2 = (1-c^2) (1-c^2)^{K-1} e(0)^2 \geq (1-c^2) \varepsilon^2 \geq \varepsilon^2/2,
\]
as required, where we used the definition of $K$ in the second inequality.

We set
\[
d\coloneqq \min_{k\in \{ 0,\ldots , K-1 \} } (t_{k+1}-t_k ).
\]

Given $k\in \{ 0, \ldots , K-1 \} $ we will construct a classical solution $ u_k$ to the NSI for all $\nu\in [0,\nu_0]$ (where $\nu_0$ is fixed in \eqref{how_small_is_nu0_main_prop} below) on time interval $[t_k,t_{k+1}]$ (respectively) such that 
\eqnb\label{uk_requirement2}
\left| \| u_k (t) \|^2 - e(t)^2 \right| \leq \varepsilon^2/4\quad \text{ for }t\in [t_k,t_{k+1}],
\eqne
and that
\eqnb\label{uk_requirement1}
|u_{k+1} (t_{k+1})| \leq |u_k (t_{k+1})| \quad \text{ a.e. in }\RR^3 \quad \text{ for }k=0,\ldots , K-2
\eqne
and 

Then the claim of the theorem follows by defining
\[
u(t) \coloneqq  \begin{cases}
u_k (t) \qquad &t\in [t_k , t_{k+1}), k\in \{ 0,\ldots , K-1 \} ,\\
0 & t\geq t_{K} .
\end{cases}\]
Indeed, \eqref{uk_requirement1} implies that we can switch from $u_k$ to $u_{k+1}$ at time $t_{k+1}$ ($k=0,\ldots , K-2$), so that $u$ is a weak solution of the NSI for all $\nu \in [0,\nu_0 ]$, $t\in [0,T]$. Moreover \eqref{uk_requirement2} implies \eqref{1.10_new}, since 
\eqnb\label{calcc}
\left| \| u(t) \| - e(t) \right| = \left| \| u(t) \|^2 - e(t)^2 \right| / \left| \| u(t) \| + e(t) \right| \leq \varepsilon^2/4 e(t) \leq \varepsilon \quad \text{ for } t\in [t_0,t_K),
\eqne
where we used \eqref{e(t)_is_at_least_eps2/2} in the last inequality, and the claim for $t\in [t_K,T]$ follows trivially.\vspace{0.2cm}\\

In order to construct $u_k$ (for $k=0,\ldots, K-1$) we first fix $\mu >0$ such that
\eqnb\label{def_of_mu}
\mu \| u[  \chi_U ] \| =e(0)
\eqne
and we set $\eta >0$ sufficiently small such that
\eqnb\label{how_small_is_eta}
 \| u [ \chi_{ U\setminus U_{K\eta }} ] \|^2 < \frac{\min \{ \varepsilon^2 , {d\zeta }\} }{8 \mu^2 }.%\quad \text{ and } \quad  \mu \| u[\chi_{K\eta}] \| \geq e(0)/2
\eqne
Note that \eqref{def_of_tk} and \eqref{def_of_mu} give 
\eqnb\label{what_actually_is_e(tk)}
e(t_k) = (1-c^2)^k \mu^2 \| u [\chi_U ] \|^2.
\eqne
We now let $U^k\coloneqq U_{k\eta}$ and apply Lemma \ref{lemma_existence_of_f_with_Lf_rectangle} to obtain $c,c' \in (0,1/2)$ and $f_k\in C_0^\infty (P; [0,1])$ ($k=0,\ldots , K-1$) such that 
\eqnb\label{what_is_f_k}
\supp \, f_k = \overline{U^k},\quad f_k>0 \text{ in } U^k \text{ with } f_k=1 \text{ on } \overline{U^k_\eta}= \overline{U^{k+1}},
\eqne
\[
Lf_k >0 \quad \text{ in } U^{k} \setminus U^{k}_{c'\eta}, \text{ with } f_k>c \text{ in } U^k_{c'\eta/2}.
\]
Recall that $c, c'$ are independent of $k$. Let $\phi_k \in C_0^\infty (U^{k}; [0,1])$ be such that
\eqnb\label{what_is_phi_k}
\supp \, \phi_k \subset U^{k}_{c'\eta/2} \quad \text{ and }\quad \phi_k=1 \text{ on } U^k_{c'\eta}.
\eqne
Note that \eqref{how_small_is_eta} implies that
\eqnb\label{smallness_of_eta_extra}
\left| \|u[f_k ]\|^2 - \| u[\phi_k ]\|^2 \right| , \left| \|u[\chi_U ]\|^2 - \| u[\phi_k ]\|^2 \right| \leq \frac{\min \{ \varepsilon^2 , d\zeta \} }{8 \mu^2 }
\eqne
for all $k=0,\ldots, K-1$. Indeed, as for the first of these quantities (the second one is analogous), note that since $\chi_{U_{k\eta }}\leq f_k, \phi_k \leq \chi_U$ we have $|f_k^2- \phi_k^2 |\leq \chi_{U\setminus U_{k\eta }}$. Thus
\[
\left| |u[f_k]|^2 - |u[\phi ]|^2 \right| = \left| u \left[ \sqrt{|f_k^2 - \phi_k^2 |}\right] \right|^2  \leq | u[\chi_{U\setminus U_{k\eta }}] |^2
\]
and \eqref{smallness_of_eta_extra} follows by integrating over $\RR^3$ and using \eqref{how_small_is_eta}.

We will consider an affine modification $E_k (t)^2$ of $e(t)^2$ on the time interval $[t_k,t_{k+1}]$ such that 
\eqnb\label{conditions_on_Ek_at_endpoints}
E_k (t_k)^2 = (1-c^2 )^k \mu^2 \| u [\phi_k ] \|^2 \quad \text{ and } \quad E_k(t_{k+1})^2 = (1-c^2) E_k (t_k)^2. 
\eqne
(Recall $e(t)$ satifies the above conditions with $\| u [\phi_k ] \|$ replaced by $\| u [\chi_U  ] \|$, see \eqref{what_actually_is_e(tk)}.)
Namely we set
\[
E_k(t)^2 \coloneqq e(t)^2 - (1-c^2)^k \mu^2 \left( \| u [\chi_U ] \|^2 - \| u [\phi_k ]\|^2 \right) \left( 1- c^2 \frac{t-t_k}{t_{k+1}-t_k} \right).
\]
Roughly speaking, $E_k$ is a convenient modification of $e$ that allows us to satisfy \eqref{uk_requirement1} while not causing any trouble to either \eqref{uk_requirement2} or the NSI. For example, we  see that
\eqnb\label{Ek_is_close_to_e}\begin{split}
\left| E_k(t)^2-e(t)^2 \right| &= (1-c^2)^k \mu^2 \left( \| u [\chi_U ] \|^2 - \| u [\phi_k ]\|^2 \right) \left( 1- c^2 \frac{t-t_k}{t_{k+1}-t_k} \right) \\
&\leq (1-c^2)^k \varepsilon^2/8\\
&\leq  \varepsilon^2/8\hspace{5cm} \text{ for } t\in [t_k,t_{k+1}]
\end{split}
\eqne
where we used \eqref{smallness_of_eta_extra}. This implies in particular that $E_k(t)$ is well-defined (as $e(t)^2\geq \varepsilon^2/ 2$, recall \eqref{e(t)_is_at_least_eps2/2}). 
Moreover , using \eqref{smallness_of_eta_extra} again
\eqnb\label{what_is_time_deriv_of_E_k}\begin{split}
(E_k(t)^2)' &= (e(t)^2)' + (1-c^2)^k \mu^2 \left( \|u[\chi_U ]\|^2 - \| u[\phi_k ]\|^2 \right) \frac{c^2}{t_{k+1}-t_k} \\
&\leq -\zeta + (1-c^2)^k c^2 d\zeta /8 (t_{k+1}-t_k)\\
&\leq -\zeta + \zeta/8 \\
&< -\zeta/2\hspace{3cm} \text{ for } t\in [t_k,t_{k+1}].
\end{split}
\eqne
We can now define $u_k$ by writing
\[
u_k(t) \coloneqq u[f_{k,t}],
\]
where
\[
f_{k,t}^2 \coloneqq (1-c^2)^k \mu^2 f_k^2-  \left( (1-c^2)^k \mu^2 -  \frac{E_k(t)^2 }{\| u[\phi_k] \|^2}\right) \phi_k^2.
\]

Observe that, due to the monotonicity of $E_k$ (shown above) and \eqref{conditions_on_Ek_at_endpoints}, the last term above can be bounded above and below 
\eqnb\label{subtractable_term_is_nonegative_and_bdd}
0\leq \left( (1-c^2)^k \mu^2 -  \frac{E_k(t)^2 }{\| u[\phi_k] \|^2}\right) \phi_k^2 \leq c^2 (1-c^2)^k \mu^2 \phi_k^2 
\eqne
for all $t\in [t_k, t_{k+1}]$. (This is the solution to the problem we discussed informally before the proof.)

This means, in particular, that $f_{k,t}^2$ is nonnegative in $U^k$ (that is $f_{k,t}$ is well-defined by the above formula). Indeed, this is trivial for $x\in U^k \setminus U^{k}_{c'\eta /2}$ (as $\phi_k(x)=0$ for such $x$), and for $x\in U^{k}_{c'\eta /2}$ we have $f_k^2(x) >c^2$ (recall Lemma \ref{lemma_existence_of_f_with_Lf_rectangle}) and so 
\[
f_{k,t}^2(x) > (1-c^2)^k \mu^2 c^2 (1 -  \phi_k ) \geq 0,
\]
as required.

Let $\nu_0>0$ be sufficiently small such that
\eqnb\label{how_small_is_nu0_main_prop}
\nu_0 \left\| u[f_{k,t}] \cdot \Delta u[f_{k,t} ] \right\|_\infty \leq  \frac{\zeta }{4 \| u[\chi_U] \|^2 } \qquad \text{ for } t\in [t_k,t_{k+1}], k=0,\ldots , K-1.
\eqne

Having fixed $\nu_0$ we show that $u_k$ is a classical solution of the NSI with any $\nu \in [0,\nu_0]$ on the time interval $[t_k,t_{k+1}]$. Namely for each such $\nu$ we can use the monotonicity of $E_k(t)^2$ (recall \eqref{what_is_time_deriv_of_E_k}) to obtain
\eqnb\label{calculation_both_cases}
\begin{split}
\p_t |u_k(x,0,t)|^2 &= \p_t E_k(t)^2 \frac{\phi_k (x) }{\| u[\phi_k] \|^2 } \\
&\leq -\zeta \frac{\phi_k (x) }{2\| u[\chi_U] \|^2 }  \\
&= -\zeta \frac{\phi_k (x) }{2\| u[\chi_U] \|^2 } - u_k (x,0,t) \cdot \nabla \left(|u_k (x,0,t) |^2 + 2p_k (x,0,t) \right)\\
&\leq 2\nu u_k (x,0,t) \cdot \Delta u_k (x,0,t)  - u_k (x,0,t) \cdot \nabla \left(|u_k (x,0,t) |^2 + 2p_k (x,0,t) \right),
\end{split}
\eqne
as required, where we used \eqref{nonlin_term_and_p_zero} in the third step and, in the last step, we used \eqref{prop_of_structure_1} for $x$ such that $\phi_k(x)<1$ and \eqref{how_small_is_nu0_main_prop} for $x$ such that $\phi_k (x)=1$.

It remains to verify \eqref{uk_requirement2} and \eqref{uk_requirement1}.  As for \eqref{uk_requirement2} we use  observation \eqref{observation_smart} to write
\eqnb\label{temp_demonstration}
\| u_k (t) \|^2=  \| u[f_{k,t}] \|^2 = (1-c^2)^k \mu^2 \| u[f_k] \|^2- \left( (1-c^2)^k \mu^2 \| u[ \phi_k] \|^2 - E_k(t)^2 \right) .
\eqne
Thus
\[ \left| \| u_k (t) \|^2  - E_k(t)^2 \right| = (1-c^2)^k \mu^2 \left| \| u[f_{k}] \|^2 - \| u[ \phi_k ] \|^2 \right| \leq \varepsilon^2/8, \]
where we used \eqref{smallness_of_eta_extra}. This and \eqref{Ek_is_close_to_e} give \eqref{uk_requirement2}, as required.

As for \eqref{uk_requirement1} it suffices to show the claim on $R(\overline{U_{(k+1)\eta}})$ (that is on the support of $u_{k+1}$). Moreover, since both $u_k$ and $u_{k+1}$ are axially symmetric (with the same axis of symmetry, the $Ox_1$ axis), it is enough to show the claim at the points of the form $(x,0)$, where $x=(x_1,x_2 )\in \overline{U^{k+1}}$. Recalling (from \eqref{what_is_f_k}, \eqref{what_is_phi_k}) that for such $x$ $f_k (x) =\phi_k (x)=1\geq f_{k+1} (x)$ we obtain
\[
\begin{split}
|u_{k+1} (x,0,t_{k+1})|^2 &= f_{k+1,t_{k+1}}^2 (x)\\
&= (1-c^2)^{k+1} \mu^2 f_{k+1}^2 (x) - \left( (1-c^2)^{k+1} \mu^2 -  \frac{E_{k+1}(t_{k+1})^2 }{\| u[\phi_{k+1}] \|^2}\right) \phi_{k+1}(x)^2\\
&\leq (1-c^2)^{k+1} \mu^2 \\
&= (1-c^2)^{k} \mu^2 - c^2(1-c^2)^{k} \mu^2  \\
&= (1-c^2)^{k} \mu^2 f_k^2(x) - c^2  (1-c^2)^{k} \mu^2 \phi_k^2 (x)\\
&\leq (1-c^2)^{k} \mu^2 f_k^2(x) - \left( (1-c^2)^k \mu^2 -  \frac{E_k(t_{k+1})^2 }{\| u[\phi_k] \|^2}\right) \phi_k (x)^2\\
&= f_{k,t_{k+1}}^2 (x)\\
&= | u_k (x,0,t_{k+1}) |^2
\end{split}
\]
where we used \eqref{subtractable_term_is_nonegative_and_bdd} twice. \end{proof}

\subsection{The case of discontinuous $e(t)$}\label{sec_pf_of_case_discontinuous}

Here we comment on how to modify the proof of Theorem \ref{thm_main} to the case when $e(t)$ is discontinuous.\\

Since $e(t)$ is nonincreasing, it has $M\leq \lceil 3e(0)/\varepsilon \rceil$ jumps by at least $\varepsilon/3$, where $\lceil w \rceil$ stands for the smallest integer larger or equal $w\in \RR$. One can modify Lemma \ref{lem_modification_of_e} to be able to assume that $e$ in Theorem \ref{thm_main} is piecewise smooth with $(e(t)^2)'\leq -\zeta$, and has at most $M$ jumps. For such $e$ Theorem \ref{thm_main} remains valid, by incorporating the jumps into the the choice of $t_k$'s (so that, in particular, the cardinality of $\{ t_k \}$ would be $M+K$, rather than $K$). The proof then follows in the same way as above.

\section{Proof of Theorem \ref{thm_main2}}\label{sec_pf_of_energy+blowup}

The construction of a weak solution to the NSI with blow-up on a Cantor set and with an arbitrary energy profile (Theorem \ref{thm_main2}) is similar to the proof of the following weaker result, where the blow-up on a Cantor set is replaced by a blow-up on a single point $x_0\in \RR^3 $.
\begin{proposition}\label{prop_before_thm_main2}
Given an open set $W\subset \RR^3$, $\varepsilon>0$, $T>0$ and a nonincreasing function $e\colon [0,T] \to [0,\infty )$ such that $e(t)\to 0$ as $t\to T$ there exists $\nu_0>0$ and a weak solution $u$ of the NSI for all $\nu \in [0,\nu_0]$ such that $\supp\,u(t) \subset W$ and
\[
\left| \| u(t) \| - e(t) \right| \leq \varepsilon \qquad \text{ for all } t\in [0,T],
\]
and that $u$ is unbounded in any neighbourhood of $(x_0,T)$ for some $x_0\in W$.  
\end{proposition}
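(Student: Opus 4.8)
The plan is to combine the energy-profile construction from the proof of Theorem \ref{thm_main} with Scheffer's switching procedure from Section \ref{sec_construction_of_scheffer}, splitting the time interval $[0,T]$ into two stages glued at some intermediate time $T_1\in (0,T)$. First I would fix a rectangle $U\Subset\RR^2_+$ with $R(\overline U)\subset W$ that meets the $x_1$-axis after a translation, and I would pick $T_1$ close to $T$ so that $e(T_1)$ is already very small, say $e(T_1)<\varepsilon/2$; concretely I want $e(T_1)$ smaller than the $L^2$-norm of the initial datum of a rescaled copy of Scheffer's solution that will be placed inside $R(\overline U)$. On $[0,T_1]$ I would run the construction of Theorem \ref{thm_main} verbatim, producing an axisymmetric weak solution $u$ of the NSI supported in $R(\overline U)$ with $\bigl|\|u(t)\|-e(t)\bigr|\le\varepsilon$ there, and with $u(T_1)=u[f_{K-1,T_1}]$ of the form $u[\tilde f]$ for some structure $(0,\tilde f,\tilde\phi)$ on a sub-rectangle.

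The second stage is where Scheffer's blow-up enters. Take the solution $\mathfrak u$ from Theorem \ref{thm_scheffer_single_point}, rescaled (both in the intrinsic rescaling that sets $\nu_0=1$, $(x_0,T_0)=(0,1)$, and then by a further spatial scaling/translation) so that its support $\Gamma(G)$ fits inside $R(\{\tilde\phi=1\})$, its blow-up time is placed at $T$, and its initial magnitude at the switching time $T_1$ is pointwise no larger than $|u[\tilde f]|$; this last point is achievable because $u[\tilde f]\ge c>0$ on $R(\{\tilde\phi=1\})$ by Lemma \ref{lemma_existence_of_f_with_Lf_rectangle}, while a small spatial rescaling makes $\|\mathfrak u\|$ and hence $\|\mathfrak u(T_1)\|_\infty$ as small as we wish on a set shrinking into $R(\{\tilde\phi=1\})$. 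The switching inequality \eqref{what_you_need_to_combine_intro} then lets us glue $u$ on $[0,T_1]$ to the rescaled $\mathfrak u$ on $[T_1,T]$ into a weak solution of the NSI on all of $[0,T]$, with the same $\nu_0$ (shrinking $\nu_0$ if necessary). On $[T_1,T]$ we have $\|u(t)\|\le \|\mathfrak u(T_1)\|$ (Scheffer's solution is nonincreasing in every $L^p$, $p<3$, see \eqref{Lp_norms_of_scheffer_decay}), which we have arranged to be $<\varepsilon$, while $e(t)<e(T_1)<\varepsilon/2$; hence $\bigl|\|u(t)\|-e(t)\bigr|\le\varepsilon$ persists on $[T_1,T]$. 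The singular point of the glued solution is exactly the image $x_0$ of Scheffer's $x_0$ under the spatial rescaling, which lies in $R(\{\tilde\phi=1\})\subset W$, and $u\equiv0$ for $t\ge T$.

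The main obstacle is the compatibility of the gluing: we must simultaneously (i) keep the rescaled Scheffer solution supported where $u[\tilde f]$ is bounded below, (ii) make its initial magnitude pointwise $\le |u[\tilde f]|$ so that \eqref{what_you_need_to_combine_intro} holds, and (iii) keep its entire $L^2$-energy below $\varepsilon/2$ during $[T_1,T]$ so the energy profile bound is not spoiled. All three are controlled by one spatial scaling parameter $\lambda\to0$ applied to $\mathfrak u$: since $\|\lambda^{-1}\mathfrak u(\lambda^{-1}\cdot)\|_{L^2}=\lambda^{1/2}\|\mathfrak u\|_{L^2}$ and $\|\lambda^{-1}\mathfrak u(\lambda^{-1}\cdot)\|_\infty=\lambda^{-1}\|\mathfrak u\|_\infty$ on a support of diameter $O(\lambda)$, one checks that (iii) forces $\lambda$ small, which also shrinks the support for (i), whereas (ii) compares a quantity of size $\lambda^{-1}\|\mathfrak u\|_\infty$ against the fixed lower bound $c$ — so a naive scaling fails (ii). This is resolved by noting that in Scheffer's construction one is free to also rescale the \emph{amplitude}: the field $\beta\,\mathfrak u$ still solves the NSI with viscosity $\beta^{-1}\nu$ (after the intrinsic time rescaling) and still blows up, so choosing a small amplitude $\beta$ first makes $\|\beta\mathfrak u(T_1)\|_\infty<c$, securing (ii) and (iii) together, at the cost only of shrinking $\nu_0$; then an independent spatial translation/scaling places the support inside $R(\{\tilde\phi=1\})$ and the blow-up time at $T$, securing (i). I would present these scaling bookkeeping steps carefully, as they are the crux, and leave the routine verification that the glued field is still a weak solution of the NSI to the already-established switching principle \eqref{alternative_LEI}–\eqref{what_you_need_to_combine_intro}.
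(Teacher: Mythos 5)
Your route is genuinely different from the paper's. The paper does not glue the Scheffer solution onto the tail of the energy--profile construction at all: it places the blow-up and the energy profile on two \emph{disjoint} rectangles. On $U_2$ it runs Theorem \ref{thm_main} with target $e-\varepsilon$ up to the time $T'$ at which $e$ drops below $\varepsilon$, extending by zero afterwards; on $U_1$ it takes the parabolically rescaled Scheffer field $u_0(x,t)=\lambda\mathfrak{u}(\lambda x-a,\lambda^2 t)$ with $\lambda$ large (so that $\|u_0(t)\|\le\varepsilon/3$ and the blow-up time $T_0/\lambda^2$ fits into the final subinterval), recognises $u_0(0)$ as $u[v_1,f_1]$ for a structure, and uses Lemma \ref{lem_1} to manufacture a low-energy solution $u_1$ on $[0,T'']$ whose final value dominates $|u_0(0)|$ pointwise; the two pieces are then superposed via \eqref{the_substitute_for_linearity}. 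This sidesteps entirely the tension you correctly identify: the rescaling that makes $\|u_0\|_{L^2}$ small makes $\|u_0\|_{L^\infty}$ large, so $u_0(0)$ can never be dominated pointwise by the bounded tail of the energy--profile solution without further modification.

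Your proposed fix --- the amplitude rescaling $\beta\,\mathfrak u(x,\beta t)$, which is indeed a weak solution of the NSI for viscosities in $[0,\beta\nu_0]$ --- is legitimate, but the step ``then an \emph{independent} spatial translation/scaling places the support inside $R(\{\tilde\phi=1\})$ and the blow-up time at $T$'' is where the argument as written fails: the only spatial rescaling compatible with the NSI is the parabolic one $\lambda w(\lambda x,\lambda^2t)$, which multiplies the amplitude by $\lambda$ again, so it is not independent of requirement (ii). The scheme can be repaired by choosing the parameters jointly: pinning the blow-up time forces $\beta\lambda^2=T_0/(T-T_1)$, whereupon the amplitude $\lambda\beta\|\mathfrak u(0)\|_\infty = T_0\|\mathfrak u(0)\|_\infty/((T-T_1)\lambda)$ tends to $0$ as $\lambda\to\infty$ while the $L^2$ norm and the support also shrink, so all of (i)--(iii) hold simultaneously for $\lambda$ large. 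Two further points need care: first, ``run the construction of Theorem \ref{thm_main} verbatim'' with accuracy $\varepsilon$ and then ask for $e(T_1)<\varepsilon/2$ is inconsistent, since that construction is switched off once $e^2$ falls to roughly $\varepsilon^2$; you must take $T_1$ strictly before the switch-off time $t_K$ (where the solution still has a pointwise lower bound $(1-c^2)^{K/2}\mu>0$ on a fixed subrectangle) while still having $e(T_1)\le\varepsilon$. Second, \eqref{Lp_norms_of_scheffer_decay} gives uniform boundedness and decay of $\|\mathfrak u(t)\|$, not monotonicity, but the uniform bound is all that (iii) requires. With these repairs your sequential-gluing argument goes through; the paper's disjoint-superposition argument avoids the amplitude rescaling and the joint choice of parameters at the price of invoking \eqref{the_substitute_for_linearity} and Lemma \ref{lem_1}.
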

\begin{proof}
By translation we can assume that $W$ intersects the $x_1$ axis. Since $W$ is open, there exists $\overline{x}= (x_1,0,0)$ and $R>0$ such that $B(\overline{x} ,R)\subset W$. Let $T'\in [0,T]$ be the first time such that $e(t) \leq \varepsilon/3$ for $t\in [T',T]$. Let $\mathfrak{u}$ be given by \eqref{scheffers_switching_solution} and let $u_0$ be its rescaling (i.e. $u_0 (x,t) \coloneqq  \lambda \mathfrak{u} (\lambda x + x' , \lambda^2 t + t')$ for sufficiently large $\lambda >0$ and appropriately chosen $x'\in \RR^3$, $t' \in \RR$) such that $u_0$ is defined on time interval $[T'',T]$ for some $T''\in (T',T)$ (rather than on $[0,T_0]$, which was the case for $\mathfrak{u}$), $u_0 (T'')$ is axisymmetric (recall $\mathfrak{u}$ was constructed by switching between axisymmetric vector fields $u^{(j)}$, which have different axes of symmetry, see \eqref{scheffers_switching_solution}),
\[
\mathrm{supp}\, u_0 (t) \subset B(\overline{x} ,R) \quad \text{ and } \quad \| u_0 (t) \| \leq \varepsilon/3 \quad \text{ for all }t\in [T'' ,T ],
\]
and that $u_0 (t)$ blows up (at a point inside $B(\overline{x},R)$) as $t\to T$. Note that $u_0$ is axisymmetric.
We will denote by $(v,f,\phi)$ the structure corresponding to $u_0 (T'')$, that is
\[
u_0 (T'') = u[v,f],
\]
and we let $U\coloneqq \{ f>0 \}$ (i.e. the set on which the structure $(v,f,\phi )$ is based).

We now apply Lemma \ref{lem_1} with $\varepsilon/3$, $T''$ and $U_1$, $(v_1,f_1,\phi_1)$ to obtain an axisymmetric classical solution $u_1$ to the NSI on time interval $[0,T'']$ (with, possibly, lower values of viscosity than $u_0$) that is supported in $R(\overline{U})$, $\| u_1 (t) - u_0 (T'' ) \| \leq \varepsilon/3 $ for all $t\in [0,T'']$ and 
\[
| u_1 (x,T'' ) | \geq | u_0 (x,T'' )| \quad \text{ for all } x\in \RR^3.
\]
The last property guarantees that $u_1$ and $u_0$ can be combined ($u_1$ for times less than $T''$ and $u_0$ for times greater or equal $T''$) to form a weak solution of the NSI on $[0,T]$.\vspace{0.2cm}\\
\emph{Case 1.} $T'=0$ (i.e. $e(0)\leq \varepsilon$). 

Then
\[
u(t)\coloneqq \begin{cases} u_1 (t) \qquad &t\in [0,T''],\\
u_0 (t-T'') & t\in [T'',T]
\end{cases}
\]
satisfies all the claims of Proposition \ref{prop_before_thm_main2}. \vspace{0.2cm}\\
\emph{Case 2.} $T'>0$ (i.e. when the energy profile is not small for all times). 

In this case we construct another weak solution to the NSI on $[0,T']$ that is disjointly supported with $u_1$ and whose role is to, roughly speaking, waste all the nontrivial energy (i.e. cause the energy to decrease to $\varepsilon$). Namely, we fix a rectangle $U_2\Subset \RR^2_+$ that is disjoint with $U_1$ and we apply Theorem \ref{thm_main} with $\varepsilon/3$, $T'$, $U_2$ and $e_2\coloneqq e-\varepsilon/3$ to obtain $u_2$. We extend $u_2(t)$ by zero for $t\geq T' $. Then (using \eqref{the_substitute_for_linearity}) we see that
\[
u(t)\coloneqq \begin{cases} u_1 (t)+u_2 (t) \qquad &t\in [0,T''],\\
u_0 (t-T'') & t\in [T'',T]
\end{cases}
\]
satisfies all the claims of Proposition \ref{prop_before_thm_main2}.\end{proof}

We now turn to the proof of Theorem \ref{thm_main2}. For this purpose we will need to use Scheffer's construction of a weak solution to the NSI with the singular set $S$ satisfying $d_H(S)\in [\xi , 1]$ (that is Theorem \ref{thm_scheffer_1D}), similarly as we used $\mathfrak
{u}$ (defined in \eqref{scheffers_switching_solution}) above. 

To this end we first introduce some handy notation related to constructions of Cantor sets.
\subsection{Constructing a Cantor set}\label{sec_CANTOR_set}
In this section, which is based on Section 4.1 from \cite{scheffer_stuff}, we discuss the general concept of constructing Cantor sets. 

The problem of constructing Cantor sets is usually demonstrated in a one-dimensional setting using intervals, as in the following proposition. 

\begin{proposition}
Let $I\subset \RR$ be an interval and let $\tau \in (0,1)$, $M\in \NN$ be such that $\tau M<1$. Let $C_0\coloneqq I$ and consider the iteration in which in the $j$-th step ($j\geq 1$) the set $C_j$ is obtained by replacing each interval $J$ contained in the set $C_{j-1}$ by~$M$~equidistant copies of $\tau J$, each of which is contained in $J$, see for example Fig. \ref{cantor1_new}. Then the limiting object 
\[ C\coloneqq \bigcap_{j\geq 0} C_j \]
is a Cantor set whose Hausdorff dimension equals $- \log M /\log \tau$.
\end{proposition}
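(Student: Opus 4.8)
The plan is to prove $d_H(C)=s_0$ with $s_0:=-\log M/\log\tau$ by matching a covering upper bound against a mass‑distribution lower bound, having first disposed of the (routine) claim that $C$ is a Cantor set. Note that $\tau\in(0,1)$ and $M\ge 2$ (implicit: $M=1$ gives a single point, not a Cantor set) force $s_0>0$, while $\tau M<1$ forces $s_0<1$; the defining relation of $s_0$ is equivalent to the identity $M\tau^{s_0}=1$, which is what drives the computation. I would first record the structure of the construction: $C_j$ is a disjoint union of $M^j$ closed intervals each of length $\tau^j|I|$, and $C_j\subset C_{j-1}$, so $C=\bigcap_{j\ge 0}C_j$ is a nested intersection of nonempty compacts, hence nonempty and compact; it contains no nondegenerate interval (it is covered by intervals of length $\tau^j|I|\to 0$), hence is totally disconnected; and it has no isolated points because every level‑$j$ interval meeting $C$ contains $M\ge 2$ level‑$(j+1)$ subintervals meeting $C$. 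Thus $C$ is a Cantor set, and it remains to compute its dimension.

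For the upper bound $d_H(C)\le s_0$ I would use the obvious cover: given $\delta>0$, pick $j$ with $\tau^j|I|<\delta$ and cover $C\subset C_j$ by the $M^j$ level‑$j$ intervals, so that $\mathcal H^{s_0}_\delta(C)\le M^j(\tau^j|I|)^{s_0}=(M\tau^{s_0})^j|I|^{s_0}=|I|^{s_0}$. Letting $\delta\to 0$ yields $\mathcal H^{s_0}(C)\le|I|^{s_0}<\infty$, hence $d_H(C)\le s_0$.

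For the lower bound $d_H(C)\ge s_0$, which is the heart of the matter, I would invoke the mass distribution principle (see \cite{falconer}). Let $\mu$ be the Borel probability measure on $C$ assigning mass $M^{-j}$ to each level‑$j$ interval — well defined as the Kolmogorov/Carath\'eodory extension of this consistent assignment on the semiring of level‑$j$ intervals, or as the weak‑$*$ limit of the normalised Lebesgue measures of the $C_j$. The key geometric input is a uniform separation estimate: because the construction inserts $M$ equidistant copies of $\tau J$ inside each parent $J$ and $\tau M<1$, consecutive children inside $J$ are separated by a gap of length at least $c_0|J|$ with $c_0:=(1-\tau M)/(M-1)>0$, and therefore any two distinct level‑$j$ intervals lie at distance at least $c_0\tau^{j-1}|I|$. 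Then, for a ball $B(x,r)$ with $2r<c_0|I|$ (the case $2r\ge c_0|I|$ being trivial since $\mu\le 1$), pick the smallest $j\ge 1$ with $c_0\tau^j|I|\le 2r$; then $c_0\tau^{j-1}|I|>2r$, so $B(x,r)$, of diameter $2r$, meets at most one level‑$j$ interval, whence $\mu(B(x,r))\le M^{-j}=\tau^{js_0}\le\bigl(2r/(c_0|I|)\bigr)^{s_0}$, using $\tau^{s_0}=1/M$ and $\tau^{j}\le 2r/(c_0|I|)$. Thus $\mu(B(x,r))\le C_1 r^{s_0}$ for all balls, and the mass distribution principle gives $\mathcal H^{s_0}(C)\ge C_1^{-1}\mu(C)=C_1^{-1}>0$, hence $d_H(C)\ge s_0$; combining the two bounds, $d_H(C)=s_0$.

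The step I expect to be the main obstacle is the separation estimate, i.e.\ extracting from the phrase ``$M$ equidistant copies of $\tau J$, each contained in $J$'' a lower bound on the inter‑child gaps that is a fixed positive fraction of $|J|$; this is precisely where the hypothesis $\tau M<1$ is indispensable. The other ingredients are routine, the only minor technicalities being the verification that $\mu$ is a genuine Borel measure and the disposal of the large‑$r$ regime in the ball estimate.
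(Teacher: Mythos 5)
Your proof is correct and is essentially the argument the paper defers to (it gives no proof of its own, citing Example 4.5 of Falconer, which is exactly this covering upper bound combined with the mass-distribution lower bound via the natural measure and the uniform gap estimate $c_0=(1-\tau M)/(M-1)$). The only caveat is that your separation constant implicitly assumes the $M$ equidistant children span their parent interval end to end (as in the paper's figure and in the affine maps $\Gamma_n$ used later), which is the intended reading of ``equidistant copies''.
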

\begin{proof}See Example 4.5 in \cite{falconer} for a proof.\end{proof}

 Thus if $\tau\in (0,1)$, $M\in \NN$ satisfy
\[
\tau^\xi M \geq 1 \quad \text{ for some } \xi \in (0,1),
\] 
we obtain a Cantor set $C$ with 
\eqnb\label{haus_dim_of_C_is_at_least_xi}
d_H (C) \geq \xi.
\eqne
Note that both the above inequality and the constraint $\tau M <1$ (which is necessary for the iteration described in the proposition above, see also Fig. \ref{cantor1_new}) can be satisfied only for $\xi <1$.
In the remainder of this section we extend the result from the proposition above to the three-dimensional setting.

Let $G\subset \RR^3$ be a compact set, $\tau \in (0,1)$, $M\in \NN$, $z=(z_1,z_2,0)\in G$, $X>0$ be such that
\eqnb\label{CANTOR_tau_xi_M}
\tau^\xi M \geq 1,\qquad \tau M < 1
\eqne
and 
\eqnb\label{CANTOR_gamma_n_maps_G_inside}\begin{split}
&\{\Gamma_n (G)\}_{ n=1,\ldots , M } \text{ is a family of pairwise disjoint subsets of } G, \\
&\text{with }\mathrm{conv}\{\Gamma_n (G)\colon n=1,\ldots , M \}\subset G,
\end{split}
\eqne
where ``$\mathrm{conv}$'' denotes the convex hull and
\[
\Gamma_n (x) \coloneqq \tau x + z + (n-1) (X,0,0).
\]
Equivalently, 
 \eqnb\label{equiv_def_of_Gamma_n}
 \Gamma_n (x_1,x_2,x_3) = ( \beta_n (x_1), \gamma (x_2), \tau x_3 ),
 \eqne
 where
 \[
 \begin{cases}
 \beta_n (x) \coloneqq \tau x + z_1+(n-1)X ,\\
 \gamma (x) \coloneqq \tau x + z_2,
 \end{cases}\qquad x\in \RR, n=1,\ldots , M .
 \]
 Now for $j\geq 1$ let
\[
M(j) \coloneqq \lewy m=(m_1,\ldots , m_j ) \colon m_1,\ldots , m_j \in \{ 1, \ldots , M \} \prawy 
\] 
denote the set of multi-indices $m$. Note that in particular $M(1)=\{ 1, \ldots , M\}$.  Informally speaking, each multiindex $m\in M(j)$ plays the role of a ``coordinate'' which let us identify any component of the set obtained in the $j$-th step of the construction of the Cantor set. Namely, letting 
 \[
 \pi_m  \coloneqq  \beta_{m_1} \circ \ldots \circ \beta_{m_j},\qquad m\in M(j),
 \]
 that is
 \eqnb\label{def_of_pi_m}
 \pi_m (x) = \tau^j x + z_1 \frac{1-\tau^j}{1-\tau} + X \sum_{k=1}^j \tau^{k-1} (m_k-1), \quad x\in \RR
 \eqne
 we see that the set $C_j$ obtained in the $j$-th step of the construction of the Cantor set $C$ (from the proposition above) can be expressed simply as
 \[
 C_j \coloneqq \bigcup_{m\in M(j)} \pi_m (I),
 \]
 see Fig. \ref{cantor1_new}. Moreover, each $\pi_m (I)$ can be identified by, roughly speaking, first choosing the $m_1$-th subinterval, then $m_2$-th subinterval, ... , up to $m_j$-th interval, where $m=(m_1,\ldots , m_j)$. This is demonstrated in Fig. \ref{cantor1_new} in the case when $m=(1,2)\in M(2)$. 

 \begin{figure}[h]
\centering
 \includegraphics[width=\textwidth]{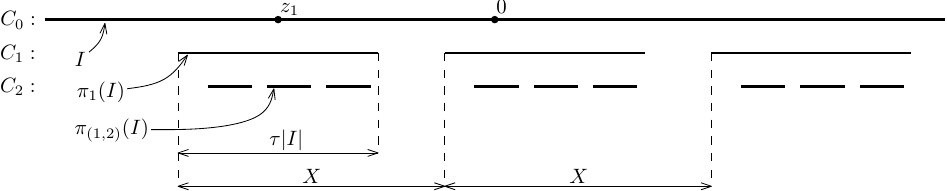}
 \nopagebreak
 \captionsetup{width=0.9\textwidth} 
  \captionof{figure}{A construction of a Cantor set $C$ on a line (here $M=3$, $j=0,1,2$).}\label{cantor1_new} 
\end{figure}
 
 In order to proceed with our construction of a Cantor set in three dimensions let
\eqnb\label{def_of_Gamma_m}
\Gamma_m (x_1,x_2,x_3) \coloneqq \left( \pi_m (x_1), \gamma^j (x_2) , \tau^j x_3 \right).
\eqne
Note that such a definition reduces to \eqref{equiv_def_of_Gamma_n} in the case $j=1$. If $j=0$ then let $M(0)$ consist of only one element $m_0$ and let $\pi_{m_0}\coloneqq\mathrm{id}$. Moreover, if $m\in M(j)$ and $\overline{m}\in M(j-1)$ is its sub-multiindex, that is $\overline{m} = (m_1,\ldots , m_{j-1})$ ($\overline{m}=m_0$ if $j=1$), then \eqref{CANTOR_gamma_n_maps_G_inside} gives
\eqnb\label{CANTOR_Gamma_m_decreases}
\Gamma_m (G)= \Gamma_{\overline{m}} ( \Gamma_{m_j}(G))\subset \Gamma_{\overline{m}} (G),
\eqne
which is a three-dimensional equivalent of the relation $\pi_m (I)\subset \pi_{\overline{m}}(I)$ (see Fig. \ref{cantor1_new}).
The above inclusion and \eqref{CANTOR_gamma_n_maps_G_inside} gives that
\eqnb\label{different_mutliindices_disjoint_subsets}
\Gamma_m (G) \cap \Gamma_{\widetilde{m}} (G) = \emptyset\qquad \text{ for } m,\widetilde{m}\in M(j), \,j\geq 1, \text{ with }m\ne \widetilde{m}.
\eqne
  Another consequence of \eqref{CANTOR_Gamma_m_decreases} is that the family of sets
 \eqnb\label{family_of_sets_decreases}
 \lewy \bigcup_{m\in M(j)} \Gamma_m (G) \prawy_j\,\, \text{ decreases as }j \text{ increases.}
 \eqne
Moreover, given $j$, each of the sets $\Gamma_m (G)$, $m\in M(j)$, is separated from the rest by at least $\tau^{j-1}\zeta$, where $\zeta >0$ is the distance between $\Gamma_n (G)$ and $\Gamma_{n+1} (G)$, $n=1,\ldots , M-1$ (recall  \eqref{CANTOR_gamma_n_maps_G_inside}).
  
  Taking the intersection in $j$ we obtain 
\eqnb\label{S_intersection}
S'\coloneqq  \bigcap_{j\geq 0} \bigcup_{m\in M(j)} \Gamma_m (G),
\eqne
  and we now show that 
  \eqnb\label{cantor_bounds_on_dH_of_S} \xi \leq d_H (S') \leq 1.\eqne Noting that $S'$ is a subset of a line, the upper bound is trivial. As for the lower bound note that
  \[
  S'\supset \bigcap_{j\geq 0} \bigcup_{m\in M(j)} \Gamma_m \left( \mathrm{conv} \{ \Gamma_n (G)\colon n=1,\ldots , M \} \right)=:S'' .
  \]
  Thus, letting $I\subset \RR$ be the orthogonal projection of $\mathrm{conv} \{ \Gamma_n (G)\colon n=1,\ldots , M \}$ onto the $x_1$ axis, we see that $I$ is an interval (as the projection of a convex set; this is the reason why we put the extra requirement for the convex hull in \eqref{CANTOR_gamma_n_maps_G_inside}). Thus the orthogonal projection of $S''$ onto the $x_1$ axis is 
 \[
 \bigcap_{j\geq 0} \bigcup_{m\in M(j)} \pi_m (I)=C,
 \] 
  where $C$ is as in the proposition above. Thus, since the orthogonal projection onto the $x_1$ axis is a Lipschitz map, we obtain $d_H(S'')\geq d_H (C)$ (as a property of Hausdorff dimension, see, for example, Proposition 3.3 in \cite{falconer}). Consequently
  \[
  d_H (S') \geq d_H (S'') \geq d_H(C) \geq \xi,
  \]
  as required (recall \eqref{haus_dim_of_C_is_at_least_xi} for the last inequality).
  
  \subsection{Sketch of the Scheffer's construction with a blow-up on a Cantor set}
Based on the discussion of Cantor sets above, we now briefly sketch the proof of Theorem \ref{thm_scheffer_1D}.
 To this end we fix $\xi\in (0,1)$ and we state the analogue of Theorem \ref{thm_scheffers_structure} in the case of the blow-up on a Cantor set.
\begin{theorem}\label{thm_scheffers_structure_1D}
There exist a set $U\Subset P$, a structure $(v,f,\phi )$, $T>0$, $M\in \NN$, $\tau\in (0,1)$, $z = (z_1,z_2,0) \in G\coloneqq R(\overline{U})$, $X>0$, $\nu_0>0$ with the following properties: relations \eqref{CANTOR_tau_xi_M} and \eqref{CANTOR_gamma_n_maps_G_inside} are satisfied and, for each $m\in M(j)$, $j\geq 0$, there exist smooth time-dependent extensions $v_t^{(m)}$, $f_t^{(m)}$ ($t\in [0,T]$) of $v$, $f$, respectively, such that $v^{(m)}_0=v$, $f^{(m)}_0=f$, $(v_t^{(m)},f_t^{(m)},\phi )$ is a structure on $U$ for each $t\in [0,T]$, $u[v_t^{(m)}, f_t^{(m)}]$ is bounded on $\RR^3\times (0,T)$ and $\nabla u[v_t^{(m)}, f_t^{(m)}]$ is bounded in $L^2 (\RR^3 \times (0,T))$, independently of $m\in M(j)$, $j\geq 0$. Moreover
\eqnb\label{def_of_w^j}
w^{(j)}(x,t)\coloneqq \sum_{m\in M(j)} u\left[ v_t^{(m)},f_t^{(m)}\right] \left( \pi_m^{-1} (\tau^j x_1), x_2,x_3 \right) \eqne
satisfies the NSI \eqref{NSI} in the classical sense for all $\nu \in [0,\nu_0]$ and $t\in [0,T]$, and 
\eqnb\label{u_grows_Cantor}
\left| w^{(j)}( \tau^{-j} \pi_m (y_1) , \gamma (y_2), \tau y_3  , T) \right|  \geq \tau^{-1} | u[v,f] (y) |\qquad \text{ for }y\in \RR^3, m\in M(j+1).
\eqne
\end{theorem}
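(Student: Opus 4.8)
\textbf{Plan of proof for Theorem \ref{thm_scheffers_structure_1D}.} The plan is to reduce this statement to the single-point version, Theorem \ref{thm_scheffers_structure}, by ``copying'' the structure $(v,f,\phi)$ across the $M^j$ translated cells indexed by $M(j)$ and then adjusting the time evolution so that the pressure interaction that produced the gain \eqref{u_grows} in the single-point case is reproduced within each cell. First I would invoke Theorem \ref{thm_scheffers_structure} to fix $U\Subset \RR^2_+$, the structure $(v,f,\phi)$, the time $T>0$ (renaming Scheffer's $\mathcal{T}$), and the constants $\tau\in(0,1)$, $z\in\RR^3$ witnessing \eqref{u_grows}; recall that, as noted after Theorem \ref{thm_scheffers_structure}, $U=U_1\cup U_2$ splits into two cells whose mutual pressure interaction drives the gain, and that by the rescaling remark following Theorem \ref{thm_scheffer_single_point} we may assume $z=(z_1,z_2,0)$ lies on the $x_1$--axis after rotation, so that the axisymmetry of $u[v,f]$ is preserved by the horizontal translations $\beta_n$. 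Next I would choose $M\in\NN$ and $X>0$ so that the $M$ horizontal copies $\Gamma_n(G)$, $G:=R(\overline U)$, are pairwise disjoint with convex hull inside $G$ (shrinking $U$ and rescaling if necessary, which does not affect the NSI or \eqref{u_grows} by scaling invariance), and so that the Cantor constraints \eqref{CANTOR_tau_xi_M}, $\tau^\xi M\ge 1$ and $\tau M<1$, hold; here one is free to replace $\tau$ by a suitable power $\tau^N$ and iterate the single-point gain $N$ times to make $\tau^\xi M\ge 1$ achievable while keeping $\tau M<1$, which is the one genuinely quantitative balancing act in the proof.

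With $M,\tau,X,z$ fixed, I would define, for each $j\ge 0$, the time-dependent extensions $v_t^{(j)},f_t^{(j)}$ of $v,f$ essentially as Scheffer's extensions rescaled to the cell geometry at level $j$: the point is that $w^{(j)}$ in \eqref{def_of_w^j} is, in each of the $M^j$ disjoint cells $\Gamma_m(G)$, a translated and $x_1$--rescaled copy of the single-cell field $u[v_t^{(j)},f_t^{(j)}]$, and because these cells are disjoint (by \eqref{different_mutliindices_disjoint_subsets}) and separated by a definite distance $\tau^{j-1}\zeta$, the verification of the NSI \eqref{NSI} for $w^{(j)}$ reduces — via the mechanism \eqref{the_substitute_for_linearity}, i.e.\ the fact that the pressure of a disjoint sum is the sum of the pressures and the term $u\cdot\nabla p$ vanishes on the axis plane — to verifying it cell-by-cell, which is exactly the content of Theorem \ref{thm_scheffers_structure} applied after the affine change of variables $x_1\mapsto \pi_m^{-1}(\tau^j x_1)$. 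Boundedness of $w^{(j)}$ on $\RR^3\times[0,T]$ is then automatic since $|w^{(j)}|=f_t^{(j)}$ on each cell and $f^{(j)}_t$ is a smooth compactly supported extension. The gain estimate \eqref{u_grows_Cantor} follows by the same argument: fix $m\in M(j+1)$ with sub-multiindex $\overline m\in M(j)$; the point $\tau^{-j}\pi_m(y_1)$ with the $x_2,x_3$ scaled as in the statement lands inside the cell $\Gamma_{\overline m}(G)$, where $w^{(j)}$ coincides with the appropriately rescaled $u[v_t^{(j)},f_t^{(j)}]$; then the single-cell gain \eqref{u_grows} — which within that cell is a statement about $u$ at its final time versus $u$ at its initial time under the cell's own self-scaling by $\tau$ — delivers the factor $\tau^{-1}$ relative to $|u[v,f](y)|$.

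The main obstacle, and the part I would treat most carefully, is the claim that the \emph{interaction} that produced the single-point gain is genuinely local to each cell, so that the cell-by-cell reduction is legitimate. The gain \eqref{u_grows} in Theorem \ref{thm_scheffers_structure} is driven by the pressure, which is nonlocal: $p^*[v,f]$ is a Newtonian potential of $\sum \partial_i u_j\partial_j u_i$, so in principle the fluid in cell $\Gamma_m(G)$ feels the pressure generated in all other cells. The resolution is the one Scheffer uses: within each cell the \emph{relevant} pressure interaction for the gain is between the two sub-cells $U_1,U_2$ of that single copy of the structure, a quantity that decays with the separation between cells, so by taking $X$ (hence the inter-cell distance, which at level $j$ is comparable to $\tau^{j-1}\zeta$, bounded below) sufficiently large relative to the cell diameter one makes the cross-cell pressure contributions negligibly small compared with the intra-cell gain, absorbing them into the slack already present in the NSI verification (the $\zeta/2$-type margins as in \eqref{how_small_is_nu0_main_prop} and the strict inequality $Lf>0$ off $\{\phi=1\}$). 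I would therefore state and use the quantitative decay of the pressure interaction between well-separated structures — this is precisely the \emph{pressure interaction function} estimate of \cite{scheffer_stuff} — and refer to Section 5 of \cite{scheffer_stuff} for the full construction of the extensions $v_t^{(j)},f_t^{(j)}$ and the bookkeeping, since, as with Theorem \ref{thm_scheffers_structure}, the elaborate geometric arrangement is not reproduced here.
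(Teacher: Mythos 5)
The paper offers no self-contained proof of this theorem: its ``proof'' is a one-line citation to Section 5 of \cite{scheffer_stuff} (the geometric arrangement there produces $U$, $(v,f,\phi)$, $T$, $M$, $\tau$, $z$, $X$ simultaneously, and Proposition 5.2 of that reference builds the $w^{(j)}$). Your sketch ends in the same place and correctly isolates the real difficulty --- the nonlocality of the pressure, so that each cell of $w^{(j)}$ feels the $M^j-1$ other cells, and the need for the $j$-dependent extensions $v_t^{(j)},f_t^{(j)}$ together with a quantitative pressure-interaction decay to absorb the cross-cell terms --- so in substance you are deferring to the same source for the same hard steps. The one genuine difference of route is directional: you propose to bootstrap from the single-point Theorem \ref{thm_scheffers_structure} by first fixing its $\tau$ and then ``replacing $\tau$ by $\tau^N$ and iterating the gain $N$ times'' to meet \eqref{CANTOR_tau_xi_M}, whereas the reference (and the paper, which remarks that Theorem \ref{thm_scheffers_structure} is recovered from Theorem \ref{thm_scheffers_structure_1D} by setting $M=1$) constructs the $M$-cell geometric arrangement directly with the constraints $\tau^\xi M\geq 1$, $\tau M<1$ and \eqref{CANTOR_gamma_n_maps_G_inside} built in from the start. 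Your bootstrap is not obviously wrong, but it is not cost-free: passing from $\Gamma$ to $\Gamma^N$ changes the contraction centre and forces a re-derivation of the arrangement and of the extensions, so presenting it as a reduction to the already-proved $M=1$ case overstates how much of the work is inherited. Two smaller inaccuracies: the separation $\tau^{j-1}\zeta$ between level-$j$ cells tends to $0$ (it is only in the $w^{(j)}$ normalisation, where cells are $O(1)$ and adjacent copies sit at distance $\sim\tau^{-1}X$, that the relevant separations are bounded below uniformly in $j$); and boundedness of $w^{(j)}$ requires $|u[v^{(j)}_t,f^{(j)}_t]|=f^{(j)}_t$ only because $v^{(j)}_t=0$ is not assumed here, so the correct justification is simply that each summand in \eqref{def_of_w^j} is a translate of a fixed smooth compactly supported field and the supports are disjoint.
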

\begin{proof}
See Section 4 in \cite{scheffer_stuff}; there the so-called \emph{geometric arrangement} in the beginning of Section 4.2 gives $U$, $(v,f,\phi )$, $T_0$, $M$, $\tau$, $z$ and $X>0$, and Proposition 4.3 constructs $w^{(j)}$ (which is denoted by $v^{(j)}$). 
\end{proof}
Observe that the claim of Theorem \ref{thm_scheffers_structure} (that is the vector field $u(t)$ in the statement of Theorem \ref{thm_scheffers_structure}) is recovered by letting $M\coloneqq 1 $ and $u(t) \coloneqq w^{(0)} (t)$.

Given the theorem above we can easily obtain Scheffer's construction with a blow-up on a Cantor set (that is a solution $\mathfrak{u}$ to Theorem \ref{thm_scheffer_1D}).

Indeed, let 
\eqnb\label{cantor_what_is_uj}
u^{(j)}(x_1,x_2,x_3,t) \coloneqq \tau^{-j} w^{(j)} (\tau^{-j} x_1 ,\gamma^{-j} (x_2), \tau^{-j}x_3, \tau^{-2j}(t-t_j)),
\eqne
where $t_0\coloneqq 0$ and $t_j \coloneqq T\sum_{k=0}^{j-1}\tau^{2k}$, as in \eqref{def_of_tj}. Observe that
\[
\supp\, u^{(j)} (t) = \bigcup_{m\in M(j)} \Gamma_m (G),\qquad t\in [t_j, t_{j+1}]
\]
(instead of $\Gamma_1^j (G)$, which is the case in the Scheffer's construction with point blow-up; recall \eqref{what_is_supp_uj}), which shrinks (as $t\to T_0^-$) to the Cantor set $S'$ (recall \eqref{S_intersection}), whose Hausdorff dimension is greater or equal $\xi$ (recall \eqref{cantor_bounds_on_dH_of_S}). 
In fact, generalising the arguments from Section \ref{sec_construction_of_scheffer} we can show that $u^{(j)}$ satisfies the NSI in the classical sense for all $\nu \in [0,\nu_0]$ and $t\in [t_j,t_{j+1}]$,
 \eqnb\label{cantor_decrease_at_switching_of_ujs}
\left| u^{(j)} (x,t_j ) \right| \leq \left| u^{(j-1)} (x,t_j ) \right|, \qquad  x\in \RR^3, j \geq 1,
\eqne
and that consequently the vector field 
\eqnb\label{cantor_mathfraku}
\mathfrak{u} (t)  \coloneqq \begin{cases}
 u^{(j)}(t)  \qquad &\text{ if } t\in [t_{j},t_{j+1}) \text{ for some }j\geq 0, \\
 0 &\text{ if } t\geq  T_0
\end{cases}
\eqne
satisfies all the claims of Theorem \ref{thm_scheffer_1D}. We refer the reader to Section 4.2 in \cite{scheffer_stuff} to a more detailed explanation. Here we prove merely \eqref{cantor_decrease_at_switching_of_ujs}, which motivates the appearance of the rescalings that were used in \eqref{u_grows_Cantor} (i.e. the appearance of $\pi_m$, $\tau$, $\gamma $).

It is sufficient to consider $x\in \bigcup_{m\in M(j)} \Gamma_m (G)$, as otherwise the claim is trivial. Thus suppose that $x=\Gamma_m (y)$ for some $m\in M(j)$ and $y\in G$. We obtain
\[\begin{split}
\left| u^{(j)} (x,t_j ) \right| &= \tau^{-j} \left| w^{(j)} (\tau^{-j} x_1, \gamma^{-j} (x_2) , \tau^{-j} x_3 ,0 ) \right|\\
&= \tau^{-j} \sum_{\widetilde{m}\in M(j)} \left| u[v,f] \left( \Gamma_{\widetilde{m}}^{-1} (x) \right) \right|\\
&= \tau^{-j} |u[v,f] (y)|\\
&\leq \tau^{-(j-1) } \left| w^{(j-1)} \left( \tau^{-(j-1)} \pi_m (y_1) , \gamma (y_2) , \tau y_3 , T\right)  \right|\\
&= \left| u^{(j-1)} \left( \pi_m (y_1) , \gamma^j (y_2), \tau^{j}y_3 , t_j \right) \right| \\
&= \left| u^{(j-1)} (x, t_j )\right|,
\end{split}
\]
as required, where we used \eqref{different_mutliindices_disjoint_subsets} (so that $\Gamma_{\widetilde{m}}^{-1} (\Gamma_m (y) )=y\,\chi_{\widetilde{m}=m}$) in the third equality and \eqref{u_grows_Cantor} in the inequality (recall also the definitions \eqref{cantor_what_is_uj}, \eqref{def_of_w^j}, \eqref{def_of_Gamma_m} of $u^{(j)}$, $w^{(j)}$, $\Gamma_m$, respectively).

Furthermore, we note that $\mathfrak{u}\in L^\infty ((0,\infty ); L^2 (\RR^3 ))$ and $\nabla \mathfrak{u} \in L^2 (\RR^3 \times (0,\infty ))$ (which is required by the definition of weak solutions to the NSI, Definition \ref{def_weak_sol_of_NSI}). Indeed, $u^{(j)}$ consists of $M^j$ vector fields, each scaled by $\tau^{-j}$, and so the claim follows from the fact that $M\tau <1$ (so that $\sup_{t\in [t_j,t_{j+1}]} \| u^{(j)} (t) \|^2 \approx (M\tau )^j$ decreases to zero as $j\to \infty $, and $\sum_{j\geq 0} \int_{t_j}^{t_{j+1}} \| \nabla u^{(j)} (s) \|^2 \d s \approx \sum_{j\geq 0} (M\tau )^j$ converges).

\subsection{Proof of Theorem \ref{thm_main2}}\label{sec_pf_of_thm_main2}
Given $\mathfrak{u}$ constructed in the previous section, Theorem \ref{thm_main2} follows in the same way as Proposition \ref{prop_before_thm_main2}.

\section{A sharpening of the edge effect lemma (Lemma \ref{lemma_existence_of_f_with_Lf_rectangle_prelims})}\label{sec_proof_of_lemma_edge}
Here we prove Lemma \ref{lemma_existence_of_f_with_Lf_rectangle} (the sharpening of the ``edge effects'' Lemma \ref{lemma_existence_of_f_with_Lf_rectangle_prelims}), which was used in the proof of Theorem \ref{thm_main}. 

In order to prove the lemma we will need a certain generalised Mean Value Theorem. For $g\colon \RR \to \RR$ let $g[a,b]$ denote the finite difference of $g$ on $[a,b]$,
\[
g[a,b] \coloneqq \frac{g(a)-g(b)}{a-b}
\]
and let $g[a,b,c]$ denote the finite difference of $g[\cdot , b]$ on $[a,c]$,
\[
g[a,b,c] \coloneqq \left( \frac{g(a)-g(b)}{a-b}-\frac{g(c)-g(b)}{c-b} \right)/(a-c) .
\]
\begin{lemma}[generalised Mean Value Theorem]\label{lemma_gen_MVT}
If $a<b<c$, $g$ is continuous in $[a,c]$ and twice differentiable in $(a,c)$ then there exists $\xi \in (a,c)$ such that $g[a,b,c]=g''(\xi )/2$.
\end{lemma}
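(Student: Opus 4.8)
The plan is to reduce the statement to the ordinary Mean Value Theorem (or rather to Rolle's theorem applied twice) by introducing an auxiliary function that kills the divided difference $g[a,b,c]$. First I would recall that $g[a,b,c]$ is, up to the factor $1/2$, the leading coefficient of the unique quadratic polynomial $P(x) = \alpha x^2 + \beta x + \gamma$ interpolating $g$ at the three nodes $a$, $b$, $c$; indeed the Newton form of the interpolating polynomial gives $P(x) = g(b) + g[a,b](x-b) + g[a,b,c](x-b)(x-a)$, so the coefficient of $x^2$ in $P$ is exactly $g[a,b,c]$. The idea is then standard for interpolation error estimates: set $h \coloneqq g - P$, so that $h$ vanishes at the three distinct points $a<b<c$.

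The key steps, in order, are: (1) write down $P$ and verify $h(a)=h(b)=h(c)=0$; (2) apply Rolle's theorem on $[a,b]$ and on $[b,c]$ to produce $\xi_1 \in (a,b)$ and $\xi_2 \in (b,c)$ with $h'(\xi_1) = h'(\xi_2) = 0$; (3) apply Rolle's theorem once more to $h'$ on $[\xi_1,\xi_2]$ to obtain $\xi \in (\xi_1,\xi_2) \subset (a,c)$ with $h''(\xi) = 0$; (4) compute $h'' = g'' - P'' = g'' - 2g[a,b,c]$ (using that $P$ is quadratic with leading coefficient $g[a,b,c]$), so $h''(\xi)=0$ yields $g[a,b,c] = g''(\xi)/2$, as required. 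I should note at the outset that the hypotheses — $g$ continuous on $[a,c]$ and twice differentiable on $(a,c)$ — are exactly what is needed: continuity on the closed interval for the first two applications of Rolle's theorem (whose endpoints include $a$ and $c$) and differentiability of $g'$ on the open interval for the last one, since $\xi_1,\xi_2$ both lie in $(a,c)$.

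There is essentially no serious obstacle here; the only point requiring a moment's care is the bookkeeping in step (1), namely checking that the Newton-form polynomial really does interpolate $g$ at all three nodes and really does have $x^2$-coefficient equal to $g[a,b,c]$ — this is a direct substitution using the definitions of $g[a,b]$ and $g[a,b,c]$ given just above the lemma. One could also phrase the whole argument without mentioning interpolation, by defining $h(x) \coloneqq g(x) - g(b) - g[a,b](x-b) - g[a,b,c](x-b)(x-a)$ directly and checking the three zeros by hand, which avoids invoking any external fact about polynomial interpolation; I would probably present it this way to keep the proof self-contained. The conclusion then follows immediately from the triple application of Rolle's theorem.
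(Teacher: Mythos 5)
Your proposal is correct and follows essentially the same route as the paper: both construct the quadratic Newton-form interpolant of $g$ at $a,b,c$ (whose leading coefficient is $g[a,b,c]$), subtract it from $g$ to get a function with three zeros, and apply Rolle's theorem twice to find $\xi$ with $g''(\xi)=2g[a,b,c]$. The only difference is the cosmetic choice of anchoring the Newton form at $b$ and $a$ rather than at $c$ and $b$.
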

\begin{proof} We follow the argument of Theorem 4.2 in \cite{conte}. Let
\[
p(x) \coloneqq g[a,b,c] (x-b)(x-c) + g[b,c] (x-c) + g(c) .
\]
Then $p$ is a quadratic polynomial approximating $g$ at $a,b,c$, that is $p(a)=g(a)$, $p(b)=g(b)$, $p(c)=g(c)$. Thus the error function $e(x) \coloneqq g(x)- p(x)$ has at least $3$ zeros in $[a,c]$. A repeated application of Rolle's theorem gives that $e''$ has at least one zero in $(a,c)$. In other words, there exists $\xi \in (a,c)$ such that $g'' (\xi ) = p''(\xi) = 2 g[a,b,c]$.
\end{proof}
\begin{corollary}\label{corollary_of_gen_MVT}
If $g\in C^3(a-\delta , a+\delta)$ is such that $g=0$ on $(a-\delta , a]$ and $g''' >0$ on $(a,a+\delta )$ for some $a\in \RR$, $\delta >0$ then
\[
\begin{cases}
g'' (x) >0 ,\\
0<g'(x) < (x-a) g''(x),\\
g(x) < (x-a)^2 g'' (x) 
\end{cases} \qquad \text{ for } x\in (a,a+\delta ).
\]
\end{corollary}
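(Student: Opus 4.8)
The plan is to derive the three inequalities in sequence, each one feeding into the next, using the generalised Mean Value Theorem (Lemma~\ref{lemma_gen_MVT}) together with the hypothesis $g'''>0$ on $(a,a+\delta)$ and the fact that $g$ (hence $g',g''$) vanishes identically on $(a-\delta,a]$, so in particular $g(a)=g'(a)=g''(a)=0$.

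First I would establish $g''(x)>0$ for $x\in(a,a+\delta)$: since $g''(a)=0$ and $g'''>0$ on $(a,a+\delta)$, integrating $g'''$ from $a$ to $x$ (or applying the Mean Value Theorem to $g''$ on $[a,x]$) gives $g''(x)=g''(x)-g''(a)=(x-a)g'''(\eta)>0$ for some $\eta\in(a,x)$. Next, for the middle inequality $0<g'(x)<(x-a)g''(x)$: the lower bound $g'(x)>0$ follows the same way from $g'(a)=0$ and $g''>0$ on $(a,x)$. For the upper bound I would apply Lemma~\ref{lemma_gen_MVT} with the three points $a-\delta/2$ (or any point $\le a$ where $g$ and its derivatives vanish, say just use $a$ itself via a limiting/one-sided argument — cleaner is to pick a point strictly less than $a$), $a$, and $x$; more simply, apply it with points $a<b'<x$ and let $b'\downarrow a$, or directly use that $g[a,a,x]$ in the confluent sense equals $(g'(x)-g[a,x])/(x-a)$ — actually the slickest route: apply Lemma~\ref{lemma_gen_MVT} on $[a-\delta/2, a, x]$ where $g$ vanishes on the first two nodes, so $g[a-\delta/2,a,x] = \big(0 - g[a,x]\big)/(a-\delta/2 - x)\cdot(\dots)$; this equals $g''(\xi)/2$ for some $\xi\in(a-\delta/2,x)$, and since $g''$ vanishes left of $a$ and is increasing-from-zero, $\xi$ must lie in $(a,x)$, giving $g[a,x]=g(x)/(x-a)$ related to $g''(\xi)$. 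Parallel reasoning handles $g'$.

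Concretely, for the third inequality $g(x)<(x-a)^2 g''(x)$: apply Lemma~\ref{lemma_gen_MVT} to the nodes $a-\delta/2 < a < x$; since $g(a-\delta/2)=g(a)=0$ one computes $g[a-\delta/2,a,x] = \dfrac{g(x)}{(x-a)(x-(a-\delta/2))}$, which by the lemma equals $g''(\xi)/2$ for some $\xi\in(a-\delta/2,x)$. Because $g''\equiv 0$ on $(a-\delta/2,a]$ and $g''$ is strictly increasing on $(a,a+\delta)$ (as $g'''>0$), the mean value $\xi$ is forced into $(a,x)$, whence $g''(\xi)<g''(x)$, giving $g(x) < \tfrac12 (x-a)(x-a+\delta/2)\,g''(x)$. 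This is slightly weaker than claimed, so I would instead take the left node to approach $a$: for any $b<a$, $g[b,a,x]=\dfrac{g(x)}{(x-a)(x-b)}=g''(\xi_b)/2$ with $\xi_b\in(a,x)$; letting $b\uparrow a$ gives, in the confluent limit, $\dfrac{g(x)}{(x-a)^2}=\dfrac12 g''(\xi)$ for some $\xi\in[a,x]$ (this confluent form is exactly the Taylor-with-Lagrange-remainder statement, which can also be obtained directly from the lemma by a limiting argument since $g\in C^3$). Then $g''(\xi)\le g''(x)$ with equality only at $\xi=x$; strict inequality in the conclusion comes from $g''$ being \emph{strictly} increasing. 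The same confluent device applied to $g'$ (with $g'(b)\to g'(a)=0$) yields $g'(x)=(x-a)g''(\zeta)$, $\zeta\in(a,x)$, hence $g'(x)<(x-a)g''(x)$.

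The main obstacle is the bookkeeping around the confluent (repeated-node) case of Lemma~\ref{lemma_gen_MVT}: the lemma as stated requires three \emph{distinct} points $a<b<c$, so to get the clean quadratic bounds with $(x-a)^2$ rather than $(x-a)(x-a+\delta/2)$ I must either (i) run a limiting argument $b\to a$ using continuity of $g''$ to pass $g''(\xi_b)\to g''(\xi)$ along a convergent subsequence of $\xi_b$, or (ii) invoke the standard Taylor expansion $g(x)=g(a)+g'(a)(x-a)+\tfrac12 g''(\xi)(x-a)^2$ directly (permissible since $g\in C^3\subset C^2$ near $a$) and likewise $g'(x)=g'(a)+g''(\zeta)(x-a)$. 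Either way, the one genuinely substantive point is that the Lagrange remainder point $\xi$ (resp.\ $\zeta$) lies strictly in $(a,x)$ and not at $a$ — which is where $g'''>0$ on $(a,a+\delta)$ is essential, since it makes $g''$ \emph{strictly} increasing there so that $g''(\xi)<g''(x)$ strictly whenever $\xi<x$, and the case $\xi=a$ is excluded because it would force $g(x)=0$, contradicting $g'''>0$. Everything else is routine integration of the one-signed third derivative.
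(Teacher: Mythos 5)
Your proposal is correct, and for the first two inequalities it coincides with the paper's proof: both reduce them to the ordinary Mean Value Theorem for $g'$ and $g''$ on $[a,x]$, using $g(a)=g'(a)=g''(a)=0$ and the strict monotonicity of $g''$ forced by $g'''>0$. The only real divergence is in the third inequality. The paper applies Lemma~\ref{lemma_gen_MVT} to the single triple of nodes $2a-x<a<x$: the reflected node $2a-x$ lies in $(a-\delta,a]$ where $g$ vanishes, and the product $(x-a)\bigl(x-(2a-x)\bigr)=2(x-a)^2$ produces the factor $(x-a)^2$ exactly, so $g(x)=(x-a)^2 g''(\xi)$ with $\xi\in(2a-x,x)$ and no limiting argument is needed. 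You instead take a generic left node $b<a$ and pass to the confluent limit $b\uparrow a$ (or, equivalently, invoke Taylor's theorem with Lagrange remainder), which is perfectly valid — indeed it yields the slightly stronger bound $g(x)=\tfrac12(x-a)^2 g''(\xi)$ — at the cost of a compactness/continuity argument to control $\xi_b$, or of importing Taylor's theorem instead of reusing Lemma~\ref{lemma_gen_MVT}. Your handling of the location of $\xi$ is also sound: whether $\xi$ lands in the dead zone $(b,a]$ (where $g''=0$) or in $(a,x)$ (where $g''$ is strictly below $g''(x)$), the strict inequality follows, exactly as in the paper. So the substance is right; you simply missed the one small trick ($b=2a-x$) that makes the paper's version a two-line computation.
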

\begin{proof}
Since $g'''>0$ on $(a,a+\delta )$ we see that $g''$ is increasing on this interval and so also positive (as $g''(x)=0$ for $x\leq a$). This also gives the first inequality in the second claim, while the second inequality follows from the Mean Value Theorem, $g'(x) = (x-a) g''(\xi )< (x-a) g''(x)$, where $\xi \in (a,x)$. The last claim follows from the lemma above by noting that $2a-x \in (a-\delta , a]$ (so that $g(2a-x)=g(a)=0$), and so
\[
\begin{split}
g(x) &= g(2a-x)-2g(a) + g(x) =2 (x-a)^2 g[2a-x,a,x]\\
&=(x-a)^2 g'' (\xi ) < (x-a)^2 g'' (x),
\end{split}
\]
where $\xi \in (2a-x,x)$. 
\end{proof}
We can now prove Lemma \ref{lemma_existence_of_f_with_Lf_rectangle}; that is, given $a>0$, an open rectangle $U\Subset \RR^2_+$ that is at least $a$ away from the $x_1$ axis (i.e. $U=(a_1,b_1 ) \times (a_2,b_2)$ with $a_2>a$) and $\eta\in (0,\min \{ 1, (b_1-a_1)/2, (b_2-a_2)/2\})$ we construct $f\in C_0^\infty (\RR^2_+ ; [0,1])$ such that 
\[
\supp \, f = \overline{U},\quad f>0 \text{ in } U \text{ with } f=1 \text{ on } U_\eta,
\]
\[
Lf >0 \quad \text{ in } U \setminus U_{c'\eta}, \text{ with } f>c \text{ in } U_{c'\eta/2},
\]
where $c,c'\in (0,1/2)$ depend only on $a$.
\begin{proof}[\nopunct Proof of Lemma \ref{lemma_existence_of_f_with_Lf_rectangle}]
Without loss of generality we can assume that $a<1$. Let $h\in C^\infty (\RR ; [0,1])$ be a nondecreasing function such that 
\[
h(x) = \begin{cases}
0 &x\leq 0,\\
\e^{-1/x^2} \quad & x\in (0, 1/2 ) ,\\
1 & x\geq 1.
\end{cases}
\]
Let 
\[
C_h \coloneqq \| h \|_{C^{2} ([0,1])} \in [1,\infty ) .
\]
Observe that $h'''>0$ on $(0,1/2)$. Let $h_\eta (x) \coloneqq h(x/\eta )$ and 
\[
f(x_1,x_2)\coloneqq f_1(x_1)f_2(x_2),
\] 
where 
\[
f_i (x) \coloneqq h_\eta (x-a_i) h_\eta (b_i-x),\quad i=1,2,
\]
see Fig. \ref{fi_s_square_case}. 
\begin{figure}[h]
\centering
 \includegraphics[width=0.8\textwidth]{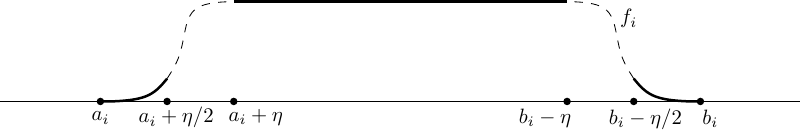}
 \nopagebreak
  \captionof{figure}{The $f_i$'s, $i=1,2$.}\label{fi_s_square_case} 
\end{figure}
Clearly
\[
f_i''' >0 \text{ on } (a_i,a_i + \eta/2 ) \quad \text{ and } \quad  f_i''' <0 \text{ on } (b_i-\eta/2,b_i ),\quad i=1,2.
\]
Moreover $\supp \, f = \overline{U}$, $f>0$ in $U$, and $f=1$ on $U_\eta$. We will show that 
\eqnb\label{temp_enough_to_show}
Lf >0\quad \text{ on }\quad U \setminus U_{\eta'} 
\eqne
for 
\eqnb\label{what_is_eta'}
\eta' \coloneqq c' \, \eta ,
\eqne
where
\eqnb\label{what_is_c'}
c'\coloneqq \frac{a  }{6\sqrt{C_h} }  \e^{-9/a^2 } \in (0,1/6).
\eqne
Note that, since $c'/2<1/2$, we have that $f> ( h_\eta ( c'\eta /2 ) )^2 = (\e^{-4/(c')^2})^2 =:c$ in $U_{c'\eta /2}$. Thus the proof of the lemma is complete when we show \eqref{temp_enough_to_show}. \vspace{0.2cm}\\

To this end let 
\eqnb\label{what_is_eta''}
\eta'' \coloneqq \frac{a\, \eta  }{3}.
\eqne
Obviously $\eta' \leq \eta '' \leq \eta \leq 1$. Letting 
\[
\begin{split}
g_1 (x_1) &\coloneqq f_1''(x_1), \\
g_2 (x_2) & \coloneqq f_2''(x_2) + f_2'(x_2)/x_2 - f_2 (x_2 )/x_2^2 ,
\end{split}
\]
we see that
\[
\begin{split}
Lf (x_1,x_2) &= f_1'' (x_1) f_2(x_2) + f_1 (x_1) f_2''(x_2) + f_1 (x_1) f_2'(x_2)/x_2 - f_1(x_1) f_2 (x_2) / x_2^2 \\
&= g_1 (x_1) f_2 (x_2) + f_1(x_1) g_2 (x_2).
\end{split}
\]
(Recall \eqref{def_of_L}.) We need to show that the expression on the right-hand side above is positive in $U\setminus U_{\eta'}$. For this we first show the \emph{claim}:
\eqnb\label{claim_proof_of_edge_effects}
g_2 > f_2'' /4 >0 \qquad \text{ on } \left( a_2, a_2+\eta'' \right) \cup \left( b_2 -\eta'' , b_2\right).
\eqne
The claim follows from the corollary of the generalised Mean Value Theorem (see Corollary \ref{corollary_of_gen_MVT}), which gives that $f_2' (x_2) >0$ and $f_2(x_2) < (x_2-a_2)^2 f_2'' (x_2)$ for $x_2 \in (a_2,a_2+\eta'')$ (since $f_2$ is given by the rescaled exponential function $\e^{-1/x_2^2}$ due to $\eta''< \eta /2$). Thus
\[
\begin{split}
g_2(x_2) &> f_2'' (x_2) - f_2 (x_2)/x_2^2 \\
&> f_2'' (x_2) \left( 1- \left( \frac{x_2-a_2}{x_2} \right)^2 \right) \\
&> f_2'' (x_2) \left( 1- \left( \frac{\eta''}{a_2} \right)^2 \right)\\
& > \frac{8}{9} f_2'' (x_2)\\
& > \frac{1}{4} f_2'' (x_2) > 0
\end{split}
\]
for such $x_2$, where we also used the fact that $\eta'' < a_2/3$. On the other hand, applying Corollary \ref{corollary_of_gen_MVT} to $g_2 ( b_2 - \cdot )$ we obtain $f_2' (x_2) > (x_2-b_2) f_2'' (x_2) $ and $f_2(x_2) < (x_2-b_2)^2 f_2'' (x_2)$ for $x_2 \in (b_2 - \eta'' , b_2)$, and so
\[
\begin{split}
g_2(x_2) &= f_2''(x_2) + f_2'(x_2)/x_2 - f_2 (x_2 )/x_2^2 \\
&> f_2'' (x_2) \left( 1 + \frac{x_2-b_2}{x_2} - \left( \frac{x_2-b_2}{x_2} \right)^2 \right) \\
&> f_2'' (x_2) \left( 1 - \frac{\eta''}{b_2-\eta''} - \left( \frac{\eta'' }{b_2-\eta'' } \right)^2 \right)\\
&>f_2''(x_2) /4 >0
\end{split}
\]
for such $x_2$, where we also used the fact that $\eta'' / (b_2 -\eta'' )<1/2$ (as $b_2>a_2 > 3\eta''$), and so the \emph{claim} follows.\\

Using the \emph{claim} we see that $g_i, f_i$ are positive on $(a_i,a_i +\eta'') \cup (b_i -\eta'' , b_i)$, $i=1,2$. Thus
\[
Lf >0 \quad \text{ in } \left( (a_1,a_1 +\eta'') \cup (b_1 -\eta'' , b_1) \right) \times  \left( (a_2,a_2 +\eta'') \cup (b_2 -\eta'' , b_2) \right),
\]
that is in the ``$\eta''$-corners'' of $U$, see Fig. \ref{d_corners}.
\begin{figure}[h]
\centering
 \includegraphics[width=0.7\textwidth]{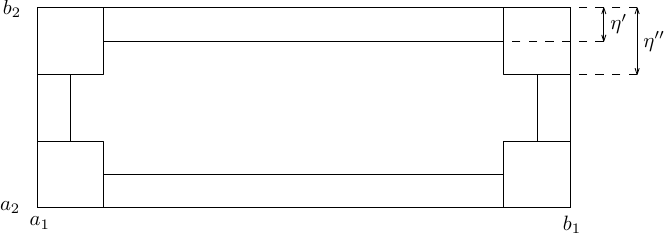}
 \nopagebreak
  \captionof{figure}{The ``$\eta''$-corners'' and ``$\eta'$-strips''.}\label{d_corners} 
\end{figure}

Now let 
\[
\begin{split}
m&\coloneqq \e^{-9/a^2 },\\
M&\coloneqq \frac{3 \,C_h }{\eta^2 a^2}.
\end{split}
\]
A direct calculation gives that
\[
f_i \geq m, |g_i |\leq M \quad \text{ in } [a_i +\eta'',b_i-\eta'' ], i=1,2.
\]
Moreover,
\[
\frac{m}{4}- (\eta ')^2 M >0.
\]
Indeed, the left-hand is side is simply $\e^{-9/a^2} (1/4 - 3/36)>0$.

We will show that
\eqnb\label{temp1}\begin{split}
Lf >0 \quad \text{ in } &[a_1+\eta'',b_1-\eta''] \times \left( (a_2 , a_2+\eta' ) \cup (b_2-\eta' , b_2)  \right)\\
\text{ and in }&  \left( (a_1 , a_1+\eta' ) \cup (b_1-\eta' , b_1)  \right) \times [a_2+\eta'',b_2-\eta''], 
\end{split}
\eqne
that is in the ``$\eta'$-strips'' at $\partial U$ between the $\eta''$-corners, see Fig. \ref{d_corners}. This will prove \eqref{temp_enough_to_show} (and so finish the proof) as the $\eta'$-strips together with the $\eta''$-corners contain $U\setminus U_{\eta'}$.

In order to prove \eqref{temp1} let first $x_1 \in [a_1+\eta'' , b_1-\eta'']$ and $x_2 \in (a_2, a_2 + \eta' )$. Then $g_1(x_1)>-M$, $g_2(x_2)>f_2'' (x_2)/4$ (from \eqref{claim_proof_of_edge_effects}), $f_2(x_2)< (x_2-a_2)^2 f_2''(x_2)$ (from Corollary \ref{corollary_of_gen_MVT}), $f_1(x_1)>m$, and so
\[
\begin{split}
Lf(x_1,x_2 ) & = g_1 (x_1 ) f_2 (x_2) + f_1 (x_1) g_2 (x_2) > -M f_2 (x_2) + f_1(x_1) f_2''(x_2)/4 \\
&>f_2'' (x_2) \left( -M (x_2 - a_2 )^2 + m/4 \right) > f_2''(x_2) \left( m/4 -M (\eta')^2 \right) >0.
\end{split}
\]
As for $x_2 \in (b_2-\eta' , b_2)$, simply replace $a_2$ in the above calculation by $b_2$. The opposite case, that is the case $x_1 \in (a_1, a_1 + \eta' ) \cup (b_1-\eta' , b_1)$, $x_2 \in [a_2+\eta'' , b_2-\eta'']$, follows in the same way.
\end{proof}
\section*{Acknowledgements}

The author is very grateful to James Robinson for his careful reading of a draft of this article and his numerous comments, which significantly improved its quality.  

The author was supported partially by EPSRC as part of the MASDOC DTC at the University of Warwick, Grant No. EP/HO23364/1, and partially by postdoctoral funding from ERC 616797.
\bibliography{liter}{}%\parindent0pt

\end{document}